\newtheorem{thm}{Theorem}[section]
\newtheorem{lem}[thm]{Lemma}
\newtheorem{prp}[thm]{Proposition}
\newtheorem{rem}[thm]{Remark}
\theoremstyle{definition}
\newcommand{\scr}[1]{\mathscr #1}
\definecolor{wco}{rgb}{0.5,0.2,0.3}
\numberwithin{equation}{section} \theoremstyle{remark}
\newcommand{\ua}{\uparrow}
\title{\bf Wasserstein  Convergence  for Conditional Empirical Measures of Subordinated Dirichlet Diffusions on Riemannian Manifolds}
\author{Huaiqian Li\footnote{Email: {\color{blue}huaiqianlee@gmail.com}}
\quad Bingyao Wu\footnote{Email: {\color{blue}bingyaowu@163.com}}
  \vspace{2mm}
\\
{\footnotesize Center for Applied Mathematics, Tianjin University, Tianjin 300072, P. R. China}
}
\date{}
\begin{document}
\allowdisplaybreaks
\def\R{\mathbb R}  \def\ff{\frac} \def\ss{\sqrt} \def\B{\mathbf
B}\def\TO{\mathbb T}
\def\I{\mathbb I_{\pp M}}\def\p<{\preceq}
\def\N{\mathbb N} \def\kk{\kappa} \def\m{{\bf m}}
\def\ee{\varepsilon}\def\ddd{D^*}
\def\dd{\delta} \def\DD{\Delta} \def\vv{\varepsilon} \def\rr{\rho}
\def\<{\langle} \def\>{\rangle} \def\GG{\Gamma} \def\gg{\gamma}
  \def\nn{\nabla} \def\pp{\partial} \def\E{\mathbb E}
\def\d{\text{\rm{d}}} \def\bb{\beta} \def\aa{\alpha} \def\D{\scr D}
  \def\si{\sigma} \def\ess{\text{\rm{ess}}}
\def\beg{\begin} \def\beq{\begin{equation}}  \def\F{\scr F}
\def\Ric{{\rm Ric}} \def\Hess{\text{\rm{Hess}}}
\def\e{\text{\rm{e}}} \def\ua{\underline a} \def\OO{\Omega}  \def\oo{\omega}
 \def\ttt{\tilde}\def\tt{\theta}
\def\cut{\text{\rm{cut}}} \def\P{\mathbb P} \def\ifn{I_n(f^{\bigotimes n})}
\def\C{\scr C}      \def\aaa{\mathbf{r}}     \def\r{r}
\def\gap{\text{\rm{gap}}} \def\prr{\pi_{{\bf m},\varrho}}  \def\r{\mathbf r}
\def\Z{\mathbb Z} \def\vrr{\varrho} \def\ll{\lambda}
\def\L{\mathcal{L} }\def\Tt{\tt} \def\TT{\tt}\def\II{\mathbb I}
\def\i{{\rm in}}\def\Sect{{\rm Sect}}  \def\H{\mathbb H}
\def\M{\scr M}\def\Q{\mathbb Q} \def\texto{\text{o}} \def\LL{\Lambda}
\def\Rank{{\rm Rank}} \def\B{\scr B} \def\i{{\rm i}} \def\HR{\hat{\R}^d}
\def\to{\rightarrow}\def\l{\ell}\def\iint{\int}
\def\EE{\scr E}\def\Cut{{\rm Cut}}\def\W{\mathbb W}
\def\A{\scr A} \def\Lip{{\rm Lip}}\def\S{\mathbb S}
\def\BB{\mathbb B}\def\Ent{{\rm Ent}} \def\i{{\rm i}}\def\itparallel{{\it\parallel}}
\def\g{{\mathbf g}}\def\Sect{{\mathcal Sec}}\def\T{\mathcal T}\def\V{{\bf V}}
\def\PP{{\bf P}}\def\HL{{\bf L}}\def\Id{{\rm Id}}\def\f{{\bf f}}\def\cut{{\rm cut}}
\def\Ss{\mathbb S}
\def\BL{\scr A}\def\Pp{\mathbb P}\def\Pp{\mathbb P} \def\Ee{\mathbb E}
\def\hp{\hat{\phi}}\def\vv{\varepsilon}\def\ww{\wedge}
\maketitle

\begin{abstract}
The asymptotic behaviour of empirical measures has plenty of studies. However, the research on conditional empirical measures is limited. Being the development of Wang \cite{eW1}, under the quadratic Wasserstein distance, we investigate the rate of convergence of conditional empirical measures associated to subordinated Dirichlet diffusion processes on a connected compact Riemannian manifold with absorbing boundary. We give the sharp rate of convergence for any initial distribution and prove the precise limit for a large class of initial distributions. We follow the basic idea of Wang, but allow ourselves substantial deviations in the proof to overcome difficulties in our non-local setting.
 \end{abstract} \noindent
{\bf MSC 2020:}  primary  60D05, 58J65; secondary 60J60, 60J76\\
{\bf Keywords:}  Conditional empirical measure, subordinated Dirichlet diffusion process, Riemannian manifold, Wasserstein distance, eigenvalue
 \vskip 2cm

\section{Introduction to main results}
Let $M$ be a $d$-dimensional connected  compact Riemannian manifold with  smooth  boundary $\pp M$. Define $\scr{P}$ as the set of all probability measures on $M$. Let $U\in C^2(M)$ be a potential such that $\mu(\d x)=\e^{U(x)} \d x$ belongs to $\scr{P}$, where $\d x$ is the volume measure on $M$. Denote $\mathcal{L}=\DD+\nn U$, where $\Delta$ and $\nabla$  stand for the Laplace--Beltrami operator and the gradient operator on $M$, respectively. Let $(X_t)_{t\geq0}$ be the diffusion process corresponding to $\mathcal{L}$ with hitting time
$$\tau:=\inf\{t\ge 0:X_t\in\pp M\}.$$

From now on, we use $\P^x$ and $\E^x$ to denote the law and the expectation of the corresponding process with initial position $x\in M$, respectively. For every $\nu\in\scr{P}$, we adopt the usual notation $\P^\nu(\cdot)=\int_M \P^x(\cdot)\,\nu(\d x)$, and denote the expectation w.r.t. $\P^\nu$ by $\E^\nu$. As usual, for every measure $\nu$ on $M$, we use $L^p(\nu):=L^p(M,\nu)$ to denote the $L^p$ function space with norm $\|\cdot\|_{L^p(\nu)}$ for every $p\in[1,\infty]$ and let $\nu(f)$ be the shorthand notation for $\int_M f\d\nu$ for every $f\in L^1(\nu)$.

Let $\rr$ be the geodesic distance on $M$. For any $p\in[1,\infty)$, the $p$-Wasserstein (or $p$-Kantorovich) distance $\W_p:\scr{P}\times\scr{P}\rightarrow[0,\infty]$ is defined as
$$\W_p(\mu_1,\mu_2)= \inf_{\pi\in \C(\mu_1,\mu_2)} \bigg(\int_{M\times M} \rr(x,y)^p \pi(\d x,\d y) \bigg)^{\ff 1 p},\quad \mu_1,\mu_2\in \scr P,$$
where $\C(\mu_1,\mu_2)$ is the set of all probability measures on the product space $M\times M$ with respective
marginal distributions $\mu_1$ and $\mu_2$. $\W_2$ will also be called quadratic Wasserstein distance, on which we mainly focus in this paper. See e.g. \cite[Chapter 5]{ChenMF2004} and \cite{Vill2003} for a detailed study on the $p$-Wasserstein distance and its connection to optimal transportation.

Let $\mathbb{N}_0=\mathbb{N}\cup\{0\}$, where $\mathbb{N}=\{1,2,\cdots\}$. Let $\delta_\cdot$ be the Dirac measure and let $\phi_m,\lambda_m$, $m\in\mathbb{N}_0$, be Dirichlet eigenfunctions and Dirichlet eigenvalues of the operator $-\mathcal{L}$ in $L^2(\mu)$ respectively (see Section 2 below for details). Set
$$\mu_0:=\phi_0^2\mu,$$
which clearly belongs to $\scr P$. Recently, F.-Y. Wang considered the family of  conditional empirical measures
$$\mu_t^\nu:=\E^\nu\Big(\frac{1}{t}\int_0^t\delta_{X_s}\,\d s\Big|\tau>t\Big),\quad t>0,$$
and proved that
\begin{equation}\label{wang-1}
\lim_{t\to\infty}\big\{t^2\W_2(\mu_t^{\nu},\mu_0)^2\big\}
=\ff{1}{\{\mu(\phi_0)\nu(\phi_0)\}^2}\sum_{m=1}^\infty\ff{\{\mu(\phi_0)\nu(\phi_m)+\nu(\phi_0)\mu(\phi_m)\}^2}{(\ll_m-\ll_0)^3},
\end{equation}
where $\nu\in\scr P$ such that $\nu(\partial M)<1$; moreover, he also investigated the finiteness of the limit in \eqref{wang-1}. See \cite[Theorem 1.1]{eW1}.

The aim of the present work is to generalize the above result to a non-local situation for a large class of Markov processes subordinated to $(X_t)_{t\geq0}$. For this purpose, we should recall some basics on subordinated processes. A function $B\in C([0,\infty); [0,\infty))\cap C^\infty((0,\infty);[0,\infty))$ is called a Bernstein function if, for each $ k\in\mathbb N$,
 $$ (-1)^{k-1} \ff{\d^k}{\d t^k}B(t)\ge0, \quad t>0.$$
We will use the following class of  Bernstein functions, i.e.,
$$ {\bf B}:= \big\{B: B\text{\ is\ a\ Bernstein\ function\ with } B(0)=0,\, B'(0)>0\big\}.$$
Let $B\in {\bf  B}$, and let $(S_t^B)_{t\geq0}$ be the unique subordinator corresponding to $B$, i.e., an increasing stochastic process with stationary, independent increments, taking values in $[0, \infty)$ and $S_0^B=0$ such that $B$ is the Laplace exponent of $(S_t^B)_{t\geq0}$, i.e.,
\begin{equation}\label{LT} \E \e^{-\ll S_t^B}= \e^{-t B(\ll)},\ \ t,\ll \ge 0.\end{equation}
Let $(X_t^B)_{t\geq0}$ be the Markov process on $M$ generated by $-B(-\mathcal{L})$. It is well known that $(X_t^B)_{t\geq0}$ can be constructed as the time-changed process of $(X_t)_{t\geq0}$ by $(S_t^B)_{t\geq0}$; more precisely,
 $$X_t^B= X_{S^B_t\wedge\tau},\ \ t\ge 0,$$
where $(S^B_t)_{t\ge 0} $ is the subordinator introduced above,  independent of $(X_t)_{t\ge 0}$. We call $(X_t^B)_{t\geq0}$ the Dirichlet diffusion process subordinated to $(X_t)_{t\geq0}$ or $B$-subordinated Dirichlet diffusion process. See \cite{SSV2012,SV2003,Bertoin97} for some basics and further studies on Bernstein functions and subordinated processes.

Let
$$\si_\tau^B:=\inf\{t\ge0:\ S_t^B>\tau\},$$
which can be regarded as the hitting time of the $B$-subordinate Dirichlet diffusion process $(X_t^B)_{t\geq0}$ at the boundary $\partial M$. For every $t>0$ and every $\nu\in\scr{P}$, we define the conditional empirical measure associated to $(X_t^B)_{t\geq0}$ as
$$\mu_t^{B,\nu}=\E^\nu\left(\left.\ff 1 t\int_0^t \dd_{X_s^B}\d s \right|\si_\tau^B>t\right).$$
Clearly, if $X_0^B\in\partial M$, then $\sigma_0^B=0$. In order to avoid the situation that $\Pp^\nu(\si_\tau^B>t)=0$ for some $\nu\in\scr{P}$, we
should consider the conditional empirical measure $\mu_t^{B,\nu}$ with $\nu\in\scr{P}_0$, where
$$\scr{P}_0:=\{\nu\in \scr{P}:\ \nu(\mathring{M})>0\},$$
and $\mathring{M}:=M\backslash\pp M$, the interior of $M$.

Let $\aa\in [0,1]$ and
 $$\mathbf{B}^\aa:=\left\{B\in \mathbf{B}:\  \liminf_{\ll\to\infty}  \ll^{-\aa} B(\ll)>0\right\}.$$
Recall that $\mu_0=\phi_0^2\mu$. One can verify that, for each $B\in\mathbf{B}^\aa$, $\mu_0$ is the unique quasi-ergodic distribution of the $B$-subordinated Dirichlet diffusion process $(X_t^B)_{t\geq0}$, and for every $\nu\in\scr{P}$ supported on $\mathring{M}$,
$$\|\mu_t^{B,\nu}-\mu_0\|_{\textup{var}}\rightarrow0,\quad t\rightarrow\infty,$$
where $\|\cdot\|_{\textup{var}}$ is the total variation norm; see Appendix for a proof. So, it is interesting to study the large time asymptotic behavior of $\mu_t^{B,\nu}$ to $\mu_0$ in the quadratic Wasserstein distance, and more significantly, estimate the rate of convergence of $\W_2(\mu_t^{B,\nu},\mu_0)$ as $t$ tends to infinity.

Now we are ready to present the main results of this paper. Here and in the sequel, we denote the supremum norm by $\|\cdot\|_\infty$.  Note that  the initial distribution is not required to be supported on $\mathring{M}$.
\begin{thm}\label{T1.1}
Let $\aa\in(0,1]$,  $B\in\mathbf{B}^\aa$  and $\nu\in\scr{P}_0$. Set
$$I:=\ff{4}{\{\mu(\phi_0)\nu(\phi_0)\}^2}\sum_{m=1}^\infty
\ff{\{\nu(\phi_0)\mu(\phi_m)+\mu(\phi_0)\nu(\phi_m)\}^2}{(\ll_m-\ll_0)(B(\ll_m)-B(\ll_0))^2}.$$
Then
\begin{equation}\label{1T1.1}
\limsup_{t\to\infty}\{t^2\W_2(\mu_t^{B,\nu},\mu_0)^2\}\le I\in(0,\infty],
\end{equation}
and moreover, $I$ is finite in either of the following cases:
\begin{itemize}
\item[(1)] $d< 2(1+2\aa)$,
\item[(2)] $d\geq 2(1+2\aa)$, and $\nu=h\mu$ with $h\in L^p(\mu)$ for some $p>\ff{2d}{d+2+4\aa}$ or $h\phi_0^{-1}\in L^q(\mu_0)$ for some $q>\ff{2(d+2)}{d+4+4\aa}$.
\end{itemize}
\end{thm}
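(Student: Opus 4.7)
The plan is to expand $\mu_t^{B,\nu}$ in the $L^2(\mu)$ basis $\{\phi_m\}_{m\ge 0}$ of $-\mathcal{L}$, isolate the leading $O(1/t)$ correction to $\mu_0$, and then bound $\W_2(\mu_t^{B,\nu},\mu_0)^2$ by a weighted negative Sobolev norm of that correction on the ground-state-conditioned manifold $(M,\mu_0)$. This mirrors Wang's scheme in \cite{eW1}, but subordination replaces $\ll_m$ by $B(\ll_m)$ in all time kernels and substantially complicates both the asymptotic expansion and the finiteness step.

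Starting from the spectral representation $P_t^B f = \sum_{m\ge 0}\e^{-tB(\ll_m)}\mu(f\phi_m)\phi_m$ of the sub-Markov semigroup of $(X_t^B)$, Fubini and the Markov property yield
\begin{align*}
\Pp^\nu(\si_\tau^B>t) &= \sum_{m\ge 0}\e^{-tB(\ll_m)}\nu(\phi_m)\mu(\phi_m),\\
\Ee^\nu\Big[\textstyle\int_0^t f(X_s^B)\,\d s\,;\,\si_\tau^B>t\Big] &= \sum_{m,n\ge 0}\nu(\phi_n)\mu(\phi_m)\mu(f\phi_n\phi_m)\!\int_0^t\!\!\e^{-sB(\ll_n)-(t-s)B(\ll_m)}\d s.
\end{align*}
As $t\to\infty$ the denominator equals $\e^{-tB(\ll_0)}\nu(\phi_0)\mu(\phi_0)(1+o(1))$; the $(0,0)$-term in the numerator produces $t\e^{-tB(\ll_0)}\nu(\phi_0)\mu(\phi_0)\mu_0(f)$, and the cross terms $(0,m)$ and $(m,0)$ with $m\ge 1$ give the leading $O(1/t)$ correction weighted by $(B(\ll_m)-B(\ll_0))^{-1}$, which is exactly how the subordinator enters $I$. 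Writing $\mu_t^{B,\nu}-\mu_0 = t^{-1}h_t\mu_0$ (with $\mu_0(h_t)=0$ by normalisation) and using that the Doob $h$-transformed operator $\widetilde{\mathcal{L}}:=\mathcal{L}+2\nn\log\phi_0\cdot\nn$ is symmetric on $L^2(\mu_0)$ with eigenfunctions $\phi_m/\phi_0$ and eigenvalues $\ll_m-\ll_0$, this translates into
$$\mu_0(h_t\cdot\phi_m/\phi_0)=\ff{\nu(\phi_0)\mu(\phi_m)+\mu(\phi_0)\nu(\phi_m)}{\nu(\phi_0)\mu(\phi_0)\,(B(\ll_m)-B(\ll_0))}+o(1)\qquad (m\ge 1).$$

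Invoking the functional inequality
$$\W_2(g\mu_0,\mu_0)^2\le 4\sum_{m\ge 1}\ff{|\mu_0(g\cdot\phi_m/\phi_0)|^2}{\ll_m-\ll_0}$$
for probability densities $g$ w.r.t.\ $\mu_0$ (a spectral $\W_2$ bound in the spirit of \cite{eW1}) applied to $g=1+h_t/t$, and passing to the limsup in $t$ (a truncation/reverse-Fatou argument handles the interchange), then yields $\limsup_{t\to\infty}t^2\W_2(\mu_t^{B,\nu},\mu_0)^2\le I$. For finiteness of $I$, Weyl's law gives $\ll_m\asymp m^{2/d}$, and $B\in\mathbf{B}^\aa$ forces $B(\ll_m)-B(\ll_0)\gtrsim \ll_m^{\aa}$ for large $m$, so the denominator $(\ll_m-\ll_0)(B(\ll_m)-B(\ll_0))^2$ grows like $m^{2(1+2\aa)/d}$. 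Since $|\mu(\phi_m)|\le 1$ by orthonormality, the $\mu$-piece converges iff $d<2(1+2\aa)$, which is case (1). For the $\nu$-piece in case (2), writing $\nu=h\mu$ (or equivalently $h\phi_0^{-1}$ in the ground-state pairing) and combining Hölder with Sogge-type $L^q$-norm eigenfunction bounds $\|\phi_m\|_{L^q(\mu)}\lesssim \ll_m^{\si(q)}$, the thresholds $p>\ff{2d}{d+2+4\aa}$ and $q>\ff{2(d+2)}{d+4+4\aa}$ are precisely those ensuring convergence of the resulting series.

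The principal obstacle is the Wasserstein bound on $(M,\mu_0)$: the density $\phi_0^2$ vanishes on $\pp M$, so the Doob-transformed space is degenerate near the boundary, and justifying the estimate $\W_2(g\mu_0,\mu_0)^2\le 4\sum_{m\ge 1}|\mu_0(g\phi_m/\phi_0)|^2/(\ll_m-\ll_0)$ demands a careful truncation/approximation argument adapted from Wang's local setup to the present non-local one. A secondary difficulty is that subordination degrades the effective spectral weight from $(\ll_m-\ll_0)^3$ (Wang's case) to $(\ll_m-\ll_0)(B(\ll_m)-B(\ll_0))^2\asymp\ll_m^{1+2\aa}$, so the eigenfunction estimates have to be applied sharply to capture the full admissible range of initial densities in case (2).
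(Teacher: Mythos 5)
Your skeleton for the upper bound --- spectral expansion of the conditional density, identification of the $O(1/t)$ correction carrying the weights $(B(\ll_m)-B(\ll_0))^{-1}$, and the inequality $\W_2(g\mu_0,\mu_0)^2\le 4\sum_{m\ge1}|\mu_0(g\phi_m\phi_0^{-1})|^2/(\ll_m-\ll_0)$ --- is exactly the paper's route (that inequality is Ledoux's $\W_2(f\mu_0,\mu_0)^2\le4\int|\nn(-\mathcal{L}_0)^{-1}(f-1)|^2\d\mu_0$ written in the eigenbasis of $\mathcal{L}_0$). However, the two steps you dismiss in a single line are where the proof actually lives, and as stated your argument has genuine gaps. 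For the limsup with a general $\nu\in\scr{P}_0$: the density $1+\rho_t^{B,\nu}$ contains, besides the linear term you describe, a quadratic remainder $\frac{1}{t\P^\nu(t<\sigma_\tau^B)}\int_0^t\xi_s\,\d s$ in which $\xi_s$ is a product of two infinite spectral series; showing it is negligible after applying $\nn(-\mathcal{L}_0)^{-1}$ requires $\|h\phi_0^{-1}\|_\infty<\infty$ (or comparable integrability), which fails for, say, Dirac initial data. A ``truncation/reverse-Fatou argument'' does not fix this: the paper replaces $\nu$ by the conditioned law $\nu_\varepsilon$ at time $\varepsilon$, uses intrinsic ultracontractivity to get $\|h_\varepsilon\phi_0^{-1}\|_\infty\le c\,\varepsilon^{-(d+2)/(2\alpha)}$, takes $\varepsilon=t^{-2}$, and then controls both the total-variation error and the convergence $I_\varepsilon\to I$. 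Some such regularization of the initial law is indispensable.

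Your finiteness argument for case (1) is also backwards. The series $\sum_m\mu(\phi_m)^2/[(\ll_m-\ll_0)(B(\ll_m)-B(\ll_0))^2]$ converges for \emph{every} $d$ by Bessel's inequality ($\sum_m\mu(\phi_m)^2\le1$); the dimension restriction comes entirely from the $\nu(\phi_m)$ terms, and in case (1) $\nu$ is an arbitrary element of $\scr{P}_0$. The crude bound $|\nu(\phi_m)|\le\|\phi_m\|_\infty\le\alpha_0\sqrt m$ only yields convergence for $d<1+2\alpha$, half the claimed range. The paper instead represents the relevant series as $\bigl\|\int_0^\infty\int_0^\infty(-\mathcal{L}_0)^{-1/2}(\eta_k^\nu-\nu(\phi_0))\,\P(S_r^B\in\d k)\d r\bigr\|_{L^2(\mu_0)}^2$ and exploits the smoothing $\|P_u^D\|_{L^1(\mu)\to L^2(\mu)}\lesssim u^{-d/4}$ applied to the heat-regularized measure $\eta_{k}^\nu\phi_0\,\mu$, combined with negative moments $\E[(S_r^B)^{-(d-2)/2}]$ of the subordinator; your proposal contains no substitute for this mechanism. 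Likewise, for case (2) the paper uses semigroup hypercontractivity bounds $\|P_t^D\|_{L^p(\mu)\to L^2(\mu)}$ and $\|P_t^0-\mu_0\|_{L^q(\mu_0)\to L^2(\mu_0)}$ rather than Sogge-type $L^q$ eigenfunction estimates, which are not available off the shelf for Dirichlet eigenfunctions of $\Delta+\nabla U$ after the ground-state transform; that part of your plan would need to be reworked along the paper's lines.
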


Some remarks are in order.
\begin{rem}
(a) Compared with the upper bound in \eqref{wang-1}, the rate of convergence in \eqref{1T1.1} is sharp, although there is an extra factor $4$ in $I$  which  comes from inequality \eqref{W2UB1} below. It seems that the original idea to prove the upper bound in \eqref{wang-1} (see \cite[Section 3]{eW1}) is not applicable to the  present non-local setting due to the difficulty in employing \eqref{W2UB} instead of \eqref{W2UB1}. In order to obtain the precise limit as in \eqref{wang-1}, additional assumption is added on the initial distribution; see Theorem \ref{T1.2} below.

(b) It is interesting to notice that the approach to deal with the finiteness of \eqref{wang-1} is based on an approximation procedure and the non-trivial sharp Sobolev inequality (see e.g. \cite[(1.2)]{eW1}), the latter of which is not available in our non-local setting. As a consequence of our approach, in the particular $\alpha=1$ case, compared with \cite[Theorem 1.1]{eW1}, in order to guarantee $I$ to be finite, when $d=6$ we additionally demand that the initial distribution $\nu$ is absolutely continuous w.r.t. $\mu$ such that the Radon--Nikodym derivative $\frac{\d\nu}{\d\mu}\in L^p(\mu)$ for some $p>1$, and when $d\geq 7$ we require higher order of $L^p(\mu)$ integrability of $\frac{\d\nu}{\d\mu}$ with $p>2d/(d+6)\geq14/13$.

(c) In particular, if $\aa\in(0,1)$ and $B:[0,\infty)\rightarrow[0,\infty)$ such that $B(t)=t^\alpha$, which clearly belongs to $\mathbf{B}^\alpha$, then the corresponding $B$-subordinated Dirichlet diffusion process is the well known $2\alpha$-stable process on $M$ killed upon exiting $\mathring{M}$ (see e.g. \cite{SV2008}). In this case, if $\nu=h\mu$ with $h\in L^2(\mu)$, then the result in Theorem \ref{T1.1}(1) is sharp in the following sense: by \eqref{EIG} and the fact that $\|\phi_k\|_{L^2(\mu)}=1$ for every $k\in\mathbb{N}_0$ (see Section 2 below), there exists a constant $c>0$ such that
$$\ff{1}{\{\mu(\phi_0)\nu(\phi_0)\}^2}\sum_{k=1}^\infty\ff{\{\mu(\phi_0)\nu(\phi_k)+\nu(\phi_0)\mu(\phi_k)\}^2}
{(\ll_k-\ll_0)\big(B(\ll_k)-B(\ll_0)\big)^2}\leq\sum_{k=1}^\infty\frac{c}{k^{2(1+2\alpha)/d}},$$
which is finite if and only if $d<2(1+2\aa)$.
\end{rem}

As \eqref{wang-1}, the next main result contains the precise limit for a large class of initial distributions.
\begin{thm}\label{T1.2}
Let $\aa\in(0,1]$ and $B\in\mathbf{B}^\aa$. Then, for any $\nu=h\mu\in\scr{P}_0$ with $h\phi_0^{-1}\in L^p(\mu_0)$ for some $p\in(p_0,\infty]$,
\begin{equation}\label{T1.2-1}
\lim_{t\to\infty}\{t^2\W_2(\mu_{t}^{B,\nu},\mu_0)^2\}=\ff{1}{\{\mu(\phi_0)\nu(\phi_0)\}^2}\sum_{m=1}^\infty
\ff{[\mu(\phi_0)\nu(\phi_m)+\nu(\phi_0)\mu(\phi_m)]^2}{(\ll_m-\ll_0)[B(\ll_m)-B(\ll_0)]^2}\in(0,\infty),
\end{equation}
where $$p_0:=\ff{6(d+2)}{d+2+12\aa}\vee \ff 3 2.$$
\end{thm}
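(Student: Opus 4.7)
The plan is to show that both $\limsup_{t\to\infty} t^2\W_2(\mu_t^{B,\nu},\mu_0)^2$ and $\liminf_{t\to\infty} t^2\W_2(\mu_t^{B,\nu},\mu_0)^2$ equal
$$\hat I:=\ff{1}{\{\mu(\phi_0)\nu(\phi_0)\}^2}\sum_{m=1}^\infty\ff{[\mu(\phi_0)\nu(\phi_m)+\nu(\phi_0)\mu(\phi_m)]^2}{(\ll_m-\ll_0)[B(\ll_m)-B(\ll_0)]^2}.$$
Theorem \ref{T1.1} already delivers a weaker limsup upper bound with an extraneous factor of $4$, so the work here is to sharpen that upper bound and to establish a matching liminf lower bound; the guiding principle is Otto's infinitesimal interpretation of $\W_2^2$.

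First I would derive a precise spectral expansion of $\mu_t^{B,\nu}$. Writing
$$\mu_t^{B,\nu}(f) = \ff{\E^\nu\int_0^t f(X_s^B)\mathbf{1}_{\si_\tau^B>t}\d s}{t\,\P^\nu(\si_\tau^B>t)}$$
and inserting the spectral identity $\E^x[g(X_s^B)\mathbf 1_{\si_\tau^B>s}] = \sum_{m\ge 0}\e^{-B(\ll_m)s}\mu(\phi_m g)\phi_m(x)$ into both numerator and denominator, then isolating the $m=n=0$ contribution together with the two classes of cross-terms where exactly one index equals zero, one obtains, for suitably regular $f$,
$$\mu_t^{B,\nu}(f)-\mu_0(f) = \ff{1}{t}\mu_0(\xi f) + o(1/t),\quad \xi := \ff{1}{\mu(\phi_0)\nu(\phi_0)\phi_0}\sum_{m=1}^\infty\ff{\mu(\phi_0)\nu(\phi_m)+\nu(\phi_0)\mu(\phi_m)}{B(\ll_m)-B(\ll_0)}\phi_m.$$
The remainder is controlled by the exponential factor $\e^{-(B(\ll_1)-B(\ll_0))t}$ together with spectral tails whose $L^2(\mu_0)$-summability requires $h\phi_0^{-1}\in L^p(\mu_0)$ with $p>p_0$.

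Next I would interpret $\xi$ via the Poisson-type equation $-\mathcal L_0\Psi=\xi$ on $\mathring M$, where $\mathcal L_0 g := \phi_0^{-1}(\mathcal L+\ll_0)(\phi_0 g)$ is the ground-state-transformed generator with invariant measure $\mu_0$. Since $\{\phi_m/\phi_0\}_{m\ge 1}$ are $\mathcal L_0$-eigenfunctions with eigenvalues $-(\ll_m-\ll_0)$ and form an orthonormal family in $L^2(\mu_0)$, $\Psi$ admits an explicit series representation, and one checks
$$\int_M|\nn\Psi|^2\d\mu_0 = \mu_0(\xi\Psi) = \hat I,$$
the Otto prediction. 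The upper bound $\limsup t^2\W_2^2\le\hat I$ should then follow by a refined Benamou--Brenier estimate along the interpolating path $\rho_s = (1-s)\mu_0 + s\mu_t^{B,\nu}$ with velocity $v_s = -t^{-1}(\mu_0/\rho_s)\nn\Psi$, in place of the cruder transport bound used to prove Theorem \ref{T1.1} (which produces the factor of $4$). For the matching lower bound I would dualize: for any smooth $\Phi$ and any $\W_2$-geodesic $(\rho_s)_{s\in[0,1]}$ joining $\mu_0$ and $\mu_t^{B,\nu}$,
$$\bigl(\mu_t^{B,\nu}(\Phi)-\mu_0(\Phi)\bigr)^2 \le \W_2(\mu_t^{B,\nu},\mu_0)^2\cdot\sup_{s\in[0,1]}\int|\nn\Phi|^2\d\rho_s,$$
and I would specialize to $\Phi=\Psi$: the left-hand side equals $t^{-2}\hat I^2 + o(t^{-2})$ by the spectral expansion, while the supremum converges to $\int|\nn\Psi|^2\d\mu_0 = \hat I$ because $\rho_s\to\mu_0$, yielding $\liminf t^2\W_2^2\ge\hat I$.

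The main obstacle, as noted in Remark (b), lies in the sharpening of the upper bound: the sharp Sobolev inequality that enables Wang's precise estimate in the local case is unavailable in this non-local setting, so controlling the first-order Otto remainder in $\W_2^2$ is delicate. I expect the threshold $p_0=\ff{6(d+2)}{d+2+12\aa}\vee\ff 3 2$ to emerge precisely from the need to dominate a truncated Sobolev-type norm of $\Psi$ by $\|h\phi_0^{-1}\|_{L^p(\mu_0)}$ via interpolation---exactly what is required so that both the $\rho_s$-dependent energy of $v_s$ on the primal side and the $\rho_s$-dependent norm of $\nn\Psi$ on the dual side agree with $\int|\nn\Psi|^2\d\mu_0$ up to $o(1/t^2)$.
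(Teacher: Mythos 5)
Your outline identifies the correct limit and the right family of tools (linearization of the density, the identity $\int_M|\nn(-\mathcal{L}_0)^{-1}\xi|^2\,\d\mu_0=\hat I$, a Benamou--Brenier-type upper bound, a dual lower bound), and in broad strokes this is also how the paper proceeds. But there is a genuine gap at exactly the point the paper singles out as the main difficulty. Your sharp upper bound runs a Benamou--Brenier estimate along the linear interpolation between $\mu_0$ and $\mu_t^{B,\nu}$; to obtain the constant $1$ rather than $4$ this requires the density $1+\rho_t^{B,\nu}$ to be uniformly close to $1$ (equivalently, a pointwise lower bound on $\rho_t^{B,\nu}$ tending to $0$), since the weight $\mu_0/\rho_s$, resp.\ $1/\mathscr{M}(1+\rho_t^{B,\nu},1)$ in \eqref{W2UB}, must be controlled in $L^\infty$. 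In Wang's local case such a pointwise bound is available; in the subordinated setting it is not (this is Remark~\ref{density-LB}), and the ``$L^2(\mu_0)$-summability of the spectral tails'' you invoke is far from sufficient. The paper's resolution is to mollify: replace $\rho_t^{B,\nu}$ by $\rho_{t,\beta}^{B,\nu}=P_{t^{-\beta}}^0\rho_t^{B,\nu}$, prove $\|\tilde\rho_{t,\beta}^{B,\nu}\|_\infty\lesssim t^{(2d-2\aa+1)\beta/2-1}\to0$ (Lemma~\ref{Le4.1}), apply the logarithmic-mean inequality to the regularized measure, and then pay for the regularization with two error terms, $\W_2(\tilde\mu_{t,\beta}^{B,\nu},\mu_{t,\beta}^{B,\nu})$ (controlled by total variation) and $\W_2(\mu_t^{B,\nu},\mu_{t,\beta}^{B,\nu})$ (controlled by the weighted inequality \eqref{wang-inequ}). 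It is this last estimate (Lemma~\ref{Le4.6}), not an interpolation bound for a Sobolev-type norm of $\Psi$, that produces the threshold $p_0$.

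The same issue undermines your lower bound as written: the duality inequality with test function $\Psi=(-\mathcal{L}_0)^{-1}\xi$ is only useful if $\sup_s\int|\nn\Psi|^2\,\d\rho_s$ can be shown to converge to $\mu_0(|\nn\Psi|^2)$, which in practice needs $\nn\Psi$ bounded; but the available termwise bounds (coefficients of order $m^{1/2-(2+2\aa)/d}$ against $\|\nn(\phi_m\phi_0^{-1})\|_\infty\lesssim m^{(d+4)/(2d)}$) do not give $\|\nn\Psi\|_\infty<\infty$ except in low dimension. The paper again regularizes, setting $f_{t,\beta}^B=(-\mathcal{L}_0)^{-1}\tilde\rho_{t,\beta}^{B,\nu}$ and proving polynomial-in-$t$ bounds on $\|f_{t,\beta}^B\|_\infty+\|\mathcal{L}_0f_{t,\beta}^B\|_\infty+\|\nn f_{t,\beta}^B\|_\infty$ (Lemma~\ref{L-4.1}) before running the duality argument, with the same two error terms reappearing via the triangle inequality. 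So your proposal is directionally right but incomplete: without the regularization $P_{t^{-\beta}}^0$ and the accompanying quantitative error analysis, neither the sharp upper bound nor the lower bound closes.
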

\begin{rem}
It is clear that the limit in \eqref{T1.2-1} belongs to $(0,\infty)$ follows from Theorem \ref{T1.1}. In order to prove the equality in \eqref{T1.2-1}, a regularization procedure is introduced which leads us to apply \eqref{W2UB} successfully; see Section 4 below for more details. However, this approach seems out of work without additional assumptions on the initial distribution $\nu\in\scr{P}_0$.
\end{rem}

Recently, besides \cite{eW1} mentioned above, large time asymptotic behaviours of empirical measures associated to (subordinated) diffusion processes to the reference measure under Wasserstein distances on Riemannian manifolds have been investigated in a series of papers. (\textsf{i}) Let $M$ be a compact Riemannian manifold with $\partial M$ empty or convex. Uniformly in $x\in M$, the precise limit of $t\E^x[\W_2(\mu_t,\mu)^2]$ as $t\rightarrow\infty$ and sharp rates of  convergence on $\E^x[\W_2(\mu_t,\mu)^2]$ for large $t$ are obtained in \cite{eWZ}, where $\mu_t$ is the empirical measure associated to the given (reflecting) diffusion process (when $\partial M\neq\emptyset$) and $\mu$ is the invariant measure. Furthermore, these results are successfully generalized to subordinated diffusion processes by the second named author joint with F.-Y. Wang in a more recent paper \cite{WangWu}. See related studies on empirical measures in the quadratic Wasserstein distance under the conditional expectation in \cite{eW2}, where the rate of convergence turns out to be quite different from \cite{eW1}. (\textsf{ii}) Let $M$ be a noncompact Riemannian manifold with $\partial M$ empty or convex. Sharp rates of convergence on $\E^\mu[\W_2(\mu_t,\mu)^2]$ for large $t$ are obtained in \cite{eW3}, where $\mu_t$ and $\mu$ are similar as the ones in (\textsf{i}). The results are generalized to a large class of subordinated processes more recently in \cite{LiWu} by the same authors of the present paper. (\textsf{iii}) Refer to \cite{eW4} for further studies on this subject in the setting of stochastic (partial) differential equations.  Last but not the least, being a classical research subject with a wild range of applications, the study on asymptotic behaviours of empirical measures associated to i.i.d. random variables to the reference measure under Wasserstein distances, particularly on quantifying the rate of convergence, has received considerable attention over years; see e.g. the papers \cite{WB2019,eAST,BL2019,Led2017,FG2015,BLG2014,DSS2013,AKT1984} and the book \cite{Tal2014} as well as references therein for many deep results.

The remainder of the paper is laid out as follows. In Section 2, we recall some known results and present some useful properties needed for the later sections. The proof of Theorem \ref{T1.1}  and Theorem \ref{T1.2} are presented in Sections 3 and 4 respectively. We should mention that the present work is motivated by \cite{eW1}. However, we introduce new ideas to overcome difficulties appeared in the present non-local setting.

 \section{Preparations}
In this section, we briefly recall some well known facts on the Dirichlet eigenvalue, the Dirichlet eigenfunction, the Dirichlet diffusion semigroup and the Dirichlet heat kernel, which are mainly borrowed from \cite[Section 2]{eW1}; see e.g. \cite{Wang2014,Ouhabaz05,Davies89,Chavel} for more details. Then we deduce some useful properties on the subordinated Dirichlet diffusion semigroup and introduce necessary notations.

It is well known that the spectrum of the operator $-\mathcal{L}$ is discrete, whose eigenvalues $\lambda_k$, $k\in\mathbb{N}_0$, are nonnegative and listed in an ascending order counting multiplicities, and the corresponding eigenfunctions $\phi_k$, $k\in\mathbb{N}_0$, satisfying the Dirichlet boundary condition, form a complete orthonormal system in the function space $L^2(\mu)$. We may assume that $\phi_0>0$ in $\mathring{M}$ since $\phi_0$ does not change the sign in $\mathring{M}$. It is also well known that $\lambda_0>0$,
\begin{equation}\label{EIG}
\alpha_0^{-1}k^{2/d}\leq\lambda_k-\lambda_0\leq\alpha_0 k^{2/d},\quad \|\phi_k\|_\infty\leq \alpha_0\sqrt{k},\quad\quad k\in\mathbb{N},
\end{equation}
for some constant $\alpha_0>1$, and
\begin{equation}\label{PHI}
\|\phi_0^{-1}\|_{L^p(\mu_0)}<\infty,\quad p\in[1,3).
\end{equation}

Let $p_t^D$ and $P_t^D$ be the Dirichlet heat kernel and the Dirichlet diffusion semigroup corresponding to $\mathcal{L}$, respectively. 
It is well known that $p_t^D$ has the following spectral representation, i.e.,
\begin{equation}\label{DHK}
p_t^D(x,y)=\sum_{m=0}^\infty \e^{-\lambda_m t}\phi_m(x)\phi_m(y),\quad t>0,\,x,y\in M.
\end{equation}
Then, we can use \eqref{DHK} to express the Dirichlet diffusion semigroup as
\begin{equation}\begin{split}\label{DSG}
P_t^D f(x)&:=\E^x[f(X_t)1_{\{t<\tau\}}]=\int_M p_t^D(x,y)f(y)\,\mu(\d y)\\
&=\sum_{m=0}^\infty \e^{-\lambda_m t}\mu(\phi_m f)\phi_m(x),\quad t>0,\,x\in M,\,f\in L^2(\mu).
\end{split}\end{equation}
Moreover, there exists a constant $c>0$ such that
\begin{equation}\begin{split}\label{DPQ}
\|P_t^D\|_{L^p(\mu)\to L^q(\mu)}&:=\sup_{\|f\|_{L^p(\mu)}\leq 1}\|P_t^D f\|_{L^q(\mu)}\\
&\leq c\e^{-\lambda_0 t}(1\wedge t)^{-\frac{d(q-p)}{2pq}},\quad t>0,\, 1\leq p\leq q\leq\infty.
\end{split}\end{equation}

Now consider
 $$\mathcal{L}_0:=\mathcal{L}+2\nabla \log\phi_0.$$
Then $\mathcal{L}_0$ is a non-positive self-adjoint operator in $L^2(\mu_0)$, and the associated semigroup, defined by $P_t^0:=\e^{t\mathcal{L}_0}$ in the sense of functional analysis, satisfies
\begin{equation}\label{R}
P_t^0 f=\e^{\lambda_0 t}\phi_0^{-1}P_t^D(f\phi_0),\quad t\geq0,\,f\in L^2(\mu_0).
\end{equation}
Moreover, $\mu_0$ is the invariant measure of $P_t^0$ since $P_t^0$ is conservative (i.e., $P_t^01=1$ for every $t\geq0$) and symmetric w.r.t.  $\mu_0$. By taking $f=\phi_0^{-1}\phi_k$ in \eqref{R} and noting that $P_t^D\phi_k=\e^{-\ll_k t}\phi_k$ for every $k\in\mathbb{N}_0$,
we clearly see that
\begin{equation}\begin{split}\label{EIG0}
&P_t^0(\phi_k\phi_0^{-1})=\e^{-(\lambda_k-\lambda_0)t}\phi_k\phi_0^{-1},\quad k\in\mathbb{N}_0,\,t\geq 0,\\
&\mathcal{L}_0(\phi_k\phi_0^{-1})=-(\lambda_k-\lambda_0)\phi_k\phi_0^{-1},\quad k\in\mathbb{N}_0,
\end{split}\end{equation}
and hence, $\{\phi_0^{-1}\phi_m\}_{m\in\mathbb{N}_0}$ is an eigenbasis of $-\mathcal{L}_0$ in $L^2(\mu_0)$. Thus, by \eqref{DSG} and \eqref{R},
\begin{equation}\label{SG0}
P_t^0 f=\sum_{m=0}^\infty\mu_0(f\phi_m\phi_0^{-1})\e^{-(\lambda_m-\lambda_0)t}\phi_m\phi_0^{-1},\quad f\in L^2(\mu_0),\,t\geq0,
\end{equation}
and the heat kernel of $P_t^0$ w.r.t. $\mu_0$, denoted by $p_t^0$, can be represented by
\begin{equation}\label{HK0}
p_t^0(x,y)=\sum_{m=0}^\infty(\phi_m\phi_0^{-1})(x)(\phi_m\phi_0^{-1})(y)\e^{-(\lambda_m-\lambda_0)t},\quad x,y\in M,\,t>0.
\end{equation}

By the intrinsic ultra-contractivity (introduced first in \cite{DB1991}),  we can find a constant $\alpha_1\geq 1$ such that
\begin{equation}\begin{split}\label{IU0}
\|P_t^0-\mu_0\|_{L^1(\mu_0)\to L^\infty(\mu_0)}&:=\sup_{\|f\|_{L^1(\mu_0)}\leq 1}\|P_t^0 f-\mu_0(f)\|_{L^\infty(\mu_0)}\\
&\leq\frac{\alpha_1 \e^{-(\lambda_1-\lambda_0)t}}{(1\wedge t)^{(d+2)/2}},\quad t>0,
\end{split}\end{equation}
which along with the semigroup property and the contractivity of $P_t^0$ in $L^p(\mu)$ implies that, there exists a constant $\alpha_2\geq 1$ such that
\begin{equation}\begin{split}\label{PI0}
\|P_t^0-\mu_0\|_{L^p(\mu_0)\rightarrow L^p(\mu_0)}&:=\sup_{\|f\|_{L^p(\mu_0)}\leq 1}\|P_t^0 f-\mu_0(f)\|_{L^p(\mu_0)}\\
&\leq \alpha_2 \e^{-(\lambda_1-\lambda_0)t},\quad t\geq 0,\,\infty \geq p\geq 1.
\end{split}\end{equation}
Combining the Riesz--Thorin interpolation theorem (see e.g. \cite[page 3]{Davies89}) with \eqref{IU0} and \eqref{PI0}, we obtain that
\begin{equation}\label{PQ0}
\|P_t^0-\mu_0\|_{L^p(\mu_0)\to L^q(\mu_0)}\leq \alpha_3 \e^{-(\lambda_1-\lambda_0)t}\{1\wedge t\}^{-\frac{(d+2)(q-p)}{2pq}},\quad t>0,\,\infty\geq q\geq p\geq 1,
\end{equation}
for some constant $\alpha_3>0$. Thus, \eqref{PQ0} and \eqref{EIG} lead to that, there exists a constant $\alpha_4>0$ such that
\begin{equation}\label{EIG0UB}
\|\phi_k\phi_0^{-1}\|_\infty\leq \alpha_4 k^{\frac{d+2}{2d}},\quad k\in\mathbb{N}.
\end{equation}
 Moreover, by \cite[Lemma 2.4]{eW1}, we have
\begin{equation}\label{grad-EIG0UB}
\|\nn(\phi_k\phi_0^{-1})\|_\infty\le \alpha_5 k^{\ff {d+4}{2d}},\quad k\in\mathbb{N},
\end{equation}
for some constant $\alpha_5>0$.

We now turn to the non-local situation. Let $B\in\mathbf{B}$, $t>0$, and let $p_t^{D,B}$ and $P_t^{D,B}$ be the subordinated Dirichlet heat kernel and the subordinated Dirichlet
diffusion semigroup associated with the $B$-subordinated Dirichlet diffusion process $(X_t^B)_{t\geq0}$, respectively. By
\eqref{LT}, \eqref{DHK} and \eqref{DSG}, one has that
\begin{equation}\label{SDHK}
p_t^{D,B}(x,y)=\sum_{m=0}^\infty \e^{-B(\lambda_m) t}\phi_m(x)\phi_m(y),\quad x,y\in M,
\end{equation}
and
\begin{equation}\begin{split}\label{SDSG}
P_t^{D,B} f(x)&:=\E^x[f(X_t^B)1_{\{t<\si_\tau^B\}}]=\int_M p_t^{D,B}(x,y)f(y)\,\mu(\d y)\\
&=\sum_{m=0}^\infty \e^{-B(\lambda_m) t}\mu(\phi_m f)\phi_m(x),\quad x\in M,\,f\in L^2(\mu).
\end{split}\end{equation}
By \eqref{R}, we immediately obtain that
\begin{equation*}\label{R1}
P_t^Df=\e^{-\lambda_0 t}\phi_0P_t^0(f\phi_0^{-1}),\quad f\in L^2(\mu_0).
\end{equation*}
Hence, the semigroup $P_t^{D,B}$ can be written as
\begin{equation}\begin{split}\label{SDSG0}
P_t^{D,B}f&=\int_0^\infty P_s^Df\,\Pp(S_t^B\in\d s)\\
&=\int_0^\infty \e^{-\lambda_0 s}\phi_0P_s^0(f\phi_0^{-1})\,\Pp(S_t^B\in\d s),\quad f\in L^2(\mu_0),
\end{split}\end{equation}
where $\Pp(S_t^B\in\cdot)$ is the distribution of the subordinator $S_t^B$.

We also need the next useful facts. Let $\alpha\in(0,1]$ and $B\in\mathbf{B}^\alpha$. It is easy to see that, there exist constants $a,c>0$ and $b\geq0$ such that
\begin{equation}\label{B-lb}
B(r)\geq c (r^\alpha\wedge r)\geq ar^\aa-b,\quad r\ge 0;
\end{equation}
see also \cite[(3.12)]{WangWu}. Moreover, according to \eqref{B-lb} (which particularly implies that $\lim_{r\rightarrow\infty}B(r)=\infty$), we have for every $r_0\geq0$,
\begin{equation}\label{equ-B}
\lim_{r\rightarrow\infty}\frac{B(r-r_0)}{B(r)-B(r_0)}=1.
\end{equation}
Together with \eqref{B-lb} and \eqref{equ-B}, applying \eqref{SDSG} and \eqref{EIG}, we get a constant $C>0$ such that

\begin{equation}\begin{split}\label{SDHT-U}
&|\e^{B(\ll_0) t}\Pp^\nu(t<\si_\tau^B)-\mu(\phi_0)\nu(\phi_0)|=|\e^{B(\ll_0) t}\nu(P_t^{D,B} 1)-\mu(\phi_0)\nu(\phi_0)|\\
&\leq\sum_{m=1}^\infty \e^{-[B(\lambda_m)-B(\lambda_0)]t}|\mu(\phi_m)\nu(\phi_m)|
\leq \e^{-[B(\lambda_1)-B(\lambda_0)]t/2}\sum_{m=1}^\infty \e^{-[B(\lambda_m)-B(\lambda_0)]t/2}\|\phi_m\|_\infty^2\\
&\leq C\e^{-[B(\lambda_1)-B(\lambda_0)]t/2},\quad t\geq1,\,\nu\in\scr{P}_0,
\end{split}\end{equation}
which clearly implies that
\begin{equation}\begin{split}\label{equ-B1}
\lim_{t\to\infty}\{\e^{B(\ll_0) t}\Pp^\nu(t<\si_\tau^B)\}=\mu(\phi_0)\nu(\phi_0),\quad  \nu\in\scr{P}_0.
\end{split}\end{equation}

The following notation is helpful. Let $\nu\in\scr{P}_0$ and $t>0$. Define
$$\eta_t^\nu=\int_M\phi_0(x)p_t^0(x,\cdot)\nu(\d x),$$
which is obviously non-negative. Then, by \eqref{HK0}, we have the spectral representation of $\eta_t^\nu$ as follows:
\begin{equation}\begin{split}\label{PSI}
\eta_t^\nu&=\sum_{m=0}^\infty\nu(\phi_m)\e^{-(\ll_m-\ll_0)t}\phi_m\phi_0^{-1}\\
&=\nu(\phi_0)+\sum_{m=1}^\infty\nu(\phi_m)\e^{-(\ll_m-\ll_0)t}\phi_m\phi_0^{-1}\geq0.
\end{split}\end{equation}

Let $\B_+(M)$ (resp. $\B_b(M)$) be the class of non-negative (resp. bounded)  measurable functions on $M$, and set $\B_1(M):=\{f\in \B_b(M): \|f\|_\infty\leq1\}$. Denote the standard gamma function as $\Gamma(\cdot)$. For any $a,b\in\R\cup\{\infty,-\infty\}$, $a\wedge b:=\min\{a,b\}$ and $a\vee b:=\max\{a,b\}$; in particular, $a\vee0=:a^+$.

Throughout the following Sections 3 and 4, we always assume that $\alpha\in(0,1]$ and $B\in\mathbf{B}^\alpha$ unless explicitly stated otherwise.

\section{Proofs of Theorem \ref{T1.1}}
In this section, we aim to prove Theorem \ref{T1.1}.  One of the key steps to reach this target is based on the following inequality:
\begin{equation}\label{W2UB1}
\W_2(f\mu_0,\mu_0)^2\leq 4\int_M|\nabla (-\mathcal{L}_0)^{-1}(f-1)|^2\,\d\mu_0,\quad f\geq0,\,\mu_0(f)=1;
\end{equation}
see \cite[Theorem 2]{Led2017}.

Let $\nu\in\scr{P}_0$. In order to employ \eqref{W2UB1} to estimate $\W_2(\mu_t^{B,\nu},\mu_0)$, we should first calculate the Radon--Nikodym derivative $\ff{\d \mu_{t}^{B,\nu}}{\d \mu_0}$. The main tools are the Markov property and the spectral representation of the subordinated Dirichlet diffusion semigroup $(P_t^{D,B})_{t\geq0}$.
\begin{lem}\label{density-h}
Let $\nu\in\scr{P}_0$ and $t>0$.
Then
\begin{equation*}
\ff{\d \mu_{t}^{B,\nu}}{\d \mu_0}=\rho_t^{B,\nu}+1,
\end{equation*}
where
$$\rho_t^{B,\nu}:=\tilde{\rho}_t^{B,\nu}+\ff{1}{t\E^\nu[1_{\{t<\si_\tau^B\}}]}\int_0^t \xi_s\,\d s-A_t,$$
and
\begin{equation}\begin{split}\label{RAI}
\tilde{\rho}_t^{B,\nu}&:=\ff{1}{t\E^\nu[1_{\{t<\si_\tau^B\}}]}\sum_{m=1}^\infty\ff{\big[\mu(\phi_0)\nu(\phi_m)+\nu(\phi_0)\mu(\phi_m)\big]\e^{-B(\ll_0)t}}{B(\ll_m)-B(\ll_0)}\phi_m\phi_0^{-1},\\
A_t&:=\ff{1}{t\E^\nu[1_{\{t<\si_\tau^B\}}]}\sum_{m=1}^\infty\ff{\big[\mu(\phi_0)\nu(\phi_m)+\nu(\phi_0)\mu(\phi_m)\big]\e^{-B(\ll_m)t}}{B(\ll_m)-B(\ll_0)}\phi_m\phi_0^{-1},\\
\xi_s&:=\left(\sum_{m=1}^\infty\e^{-B(\ll_m)s}\nu(\phi_m)\phi_m\phi_0^{-1}\right)
\left(\sum_{n=1}^\infty\e^{-B(\ll_n)(t-s)}\mu(\phi_n)\phi_n\phi_0^{-1}\right)\\
&\quad\quad-\sum_{m=1}^\infty\e^{-B(\ll_m)t}\mu(\phi_m)\nu(\phi_m),\quad 0<s\leq t.
\end{split}\end{equation}
\end{lem}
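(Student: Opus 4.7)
The plan is to compute $\mu_t^{B,\nu}(f)$ for a generic bounded measurable $f$, expand via the spectral representation of the subordinated Dirichlet semigroup, evaluate the resulting time integral, and reorganize the double series so that the four blocks indexed by $(m,n)\in\{0,\,\ge 1\}^2$ match the four summands $1$, $\tilde\rho_t^{B,\nu}$, $\frac{1}{t\,\E^\nu[1_{\{t<\si_\tau^B\}}]}\int_0^t\xi_s\,\d s$, and $-A_t$ appearing in $1+\rho_t^{B,\nu}$.

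First I would invoke the Markov property of the killed $B$-subordinated process to obtain, for $s\in(0,t)$,
$$\E^\nu\bigl[f(X_s^B)\,1_{\{t<\si_\tau^B\}}\bigr] = \nu\bigl(P_s^{D,B}(f\cdot P_{t-s}^{D,B}1)\bigr),$$
which after Fubini yields
$$t\,\E^\nu[1_{\{t<\si_\tau^B\}}]\,\mu_t^{B,\nu}(f) = \int_0^t \nu\bigl(P_s^{D,B}(f\cdot P_{t-s}^{D,B}1)\bigr)\,\d s.$$
Inserting the spectral expansion \eqref{SDSG} into both semigroups and expanding gives
$$\nu\bigl(P_s^{D,B}(f\cdot P_{t-s}^{D,B}1)\bigr)=\sum_{m,n\ge 0}\e^{-B(\ll_m)s-B(\ll_n)(t-s)}\,\nu(\phi_m)\mu(\phi_n)\mu(\phi_m\phi_n f).$$
The elementary identity $\int_0^t\e^{-as-b(t-s)}\,\d s=t\e^{-at}$ when $a=b$ and $(\e^{-at}-\e^{-bt})/(b-a)$ otherwise lets me perform the $s$-integral termwise.

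Next I would partition the result into the four blocks distinguished by $m\in\{0\}$ versus $m\ge 1$ and similarly for $n$, and use the identities $\mu(\phi_0^2 f)=\mu_0(f)$ and $\mu(\phi_0\phi_m f)=\mu_0(\phi_m\phi_0^{-1}f)$ to rewrite every inner product in terms of $\mu_0$. The block $(m,n)=(0,0)$ contributes $t\e^{-B(\ll_0)t}\mu(\phi_0)\nu(\phi_0)\mu_0(f)$. The two symmetric mixed blocks $\{m=0,n\ge 1\}$ and $\{m\ge 1,n=0\}$ combine into
$$\sum_{m\ge 1}\ff{\e^{-B(\ll_0)t}-\e^{-B(\ll_m)t}}{B(\ll_m)-B(\ll_0)}\bigl[\mu(\phi_0)\nu(\phi_m)+\nu(\phi_0)\mu(\phi_m)\bigr]\mu_0(\phi_m\phi_0^{-1}f),$$
whose two exponential pieces, once divided by $t\,\E^\nu[1_{\{t<\si_\tau^B\}}]$, reproduce exactly $\mu_0(f\tilde\rho_t^{B,\nu})$ and $-\mu_0(fA_t)$ in view of the definitions in \eqref{RAI}. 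Finally, the $m,n\ge 1$ block yields $\mu_0\bigl(f\int_0^t\xi_s\,\d s\bigr)$ plus the diagonal correction $t\sum_{m\ge 1}\e^{-B(\ll_m)t}\mu(\phi_m)\nu(\phi_m)\mu_0(f)$; this correction, added to the $(0,0)$ block, sums via $\E^\nu[1_{\{t<\si_\tau^B\}}]=\sum_{m\ge 0}\e^{-B(\ll_m)t}\mu(\phi_m)\nu(\phi_m)$ to $t\,\E^\nu[1_{\{t<\si_\tau^B\}}]\mu_0(f)$, producing the leading $1$ in $1+\rho_t^{B,\nu}$.

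The main technical obstacle is not the algebra but justifying the termwise integration and the interchange of summation with the $s$-integral. I would handle this by establishing absolute convergence of the double series locally uniformly in $s\in(0,t)$, combining the polynomial eigenfunction bound $\|\phi_m\|_\infty\le\alpha_0\sqrt m$ from \eqref{EIG}, Weyl's growth $\ll_m\ge\alpha_0^{-1}m^{2/d}$, and the lower bound $B(\ll_m)\ge a\ll_m^\aa-b$ from \eqref{B-lb}, which together force super-polynomial decay of each summand for any fixed positive time; the resulting bound is integrable in $s\in[0,t]$ thanks to the minimum of $s$ and $t-s$ surviving in the exponent, and Fubini applies. Dividing by $t\,\E^\nu[1_{\{t<\si_\tau^B\}}]$ and matching term by term with the expression in \eqref{RAI} then yields $\mu_t^{B,\nu}(f)=\mu_0\bigl(f(1+\rho_t^{B,\nu})\bigr)$ for every bounded Borel $f$, which is equivalent to the claimed identity for the Radon--Nikodym derivative.
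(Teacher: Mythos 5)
Your proposal is correct and follows essentially the same route as the paper: the Markov property gives $\E^\nu[f(X_s^B)1_{\{t<\si_\tau^B\}}]=\nu(P_s^{D,B}\{fP_{t-s}^{D,B}1\})$, the spectral representation \eqref{SDSG} turns this into a double series over $(m,n)$, the $s$-integral of the mixed blocks produces $\tilde\rho_t^{B,\nu}-A_t$, and the $(0,0)$ block together with the diagonal part of the $m,n\ge1$ block recombines via $\E^\nu[1_{\{t<\si_\tau^B\}}]=\sum_{m\ge0}\e^{-B(\ll_m)t}\mu(\phi_m)\nu(\phi_m)$ into the leading $1$, exactly as in the paper (which merely subtracts the normalization before, rather than after, integrating in $s$). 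Your extra attention to the termwise-integration justification goes beyond what the paper records but does not change the argument.
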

\begin{proof}
Let $t\ge s>0$ and $f\in\scr{B}_+(M)$. By the Markov property,
\begin{equation*}\begin{split}\label{N1}
&\int_M f\,\d\E^\nu[\dd_{X_s^B}1_{\{t<\si_\tau^B\}}]=\E^\nu[f(X_s^B)1_{\{t<\si_\tau^B\}}]\\
&=\E^\nu\big[f(X_s^B)1_{\{s<\si_\tau^B\}}\E^{X_s^B}(1_{\{t-s<\si_\tau^B\}})\big]\\
&=\E^\nu\big[f(X_s^B)1_{\{s<\si_\tau^B\}}(P_{t-s}^{D,B}1)(X_s^B)\big]\\
&=\nu\big(P_s^{D,B}\{fP_{t-s}^{D,B}1\}\big).
\end{split}\end{equation*}
By \eqref{SDSG}, we have
\begin{align*}
&P_s^{D,B}\{f P_{t-s}^{D,B}1\}(x)=\sum_{m=0}^\infty\e^{-B(\ll_m)s}\mu(\phi_m f P_{t-s}^{D,B}1)\phi_m(x), \quad x\in M.
\end{align*}
Recall that $\mu_0=\phi_0^2\mu$. Applying \eqref{SDSG} again, we derive that
\begin{align*}
\mu(\phi_m f P_{t-s}^{D,B}1)
&=\int_M \phi_m(y)f(y)\left(\sum_{n=0}^\infty\e^{-B(\ll_n)(t-s)}\mu(\phi_n)\phi_n(y)\right)\mu(\d y)\\
&=\sum_{n=0}^\infty\e^{-B(\ll_n)(t-s)}\mu(\phi_n)\int_M\phi_m(y)\phi_n(y)f(y)\mu(\d y)\\
&=\sum_{n=0}^\infty\e^{-B(\ll_n)(t-s)}\mu(\phi_n)\int_M (\phi_m\phi_0^{-1})(y)(\phi_n\phi_0^{-1})(y)f(y)\mu_0(\d y).
\end{align*}
Hence
\begin{equation}\begin{split}\label{N1+}
&\int_M f\,\d\E^\nu[\dd_{X_s^B}1_{\{t<\si_\tau^B\}}]\\
&=\sum_{m=0}^\infty\sum_{n=0}^\infty\e^{-B(\ll_m)s}\e^{-B(\ll_n)(t-s)}\mu(\phi_n)\nu(\phi_m)
\int_M(\phi_m\phi_0^{-1})(y)(\phi_n\phi_0^{-1})(y)f(y)\mu_0(\d y).
\end{split}\end{equation}

According to \eqref{N1+}, we deduce that the Radon--Nikodym derivative of $\E^\nu[\dd_{X_s^B}1_{\{t<\si_\tau^B\}}]$ w.r.t. $\mu_0$ can be written as
\begin{equation}\begin{split}\label{N12}
&\ff{\d\E^\nu[\dd_{X_s^B}1_{\{t<\si_\tau^B\}}]}{\d\mu_0}\\
&=\sum_{m=0}^\infty\sum_{n=0}^\infty\e^{-B(\ll_m)s}\e^{-B(\ll_n)(t-s)}\mu(\phi_n)\nu(\phi_m)(\phi_m\phi_0^{-1})(\phi_n\phi_0^{-1})\\
&=\left(\sum_{m=0}^\infty\e^{-B(\ll_m)s}\nu(\phi_m)\phi_m\phi_0^{-1}\right)\left(\sum_{n=0}^\infty\e^{-B(\ll_n)(t-s)}\mu(\phi_n)\phi_n\phi_0^{-1}\right)\\
&=\e^{-B(\ll_0)t}\mu(\phi_0)\nu(\phi_0)+\e^{-B(\ll_0)s}\nu(\phi_0)\sum_{m=1}^\infty\e^{-B(\ll_m)(t-s)}\mu(\phi_m)\phi_m\phi_0^{-1}\\
&\quad+\e^{-B(\ll_0)(t-s)}\mu(\phi_0)\sum_{m=1}^\infty\e^{-B(\ll_m)s}\nu(\phi_m)\phi_m\phi_0^{-1}\\
&\quad+\left(\sum_{m=1}^\infty\e^{-B(\ll_m)s}\nu(\phi_m)\phi_m\phi_0^{-1}\right)
\left(\sum_{n=1}^\infty\e^{-B(\ll_n)(t-s)}\mu(\phi_n)\phi_n\phi_0^{-1}\right).
\end{split}\end{equation}

Applying \eqref{SDSG} again, we get
\begin{equation*}\begin{split}
\E^\nu[1_{\{t<\si_\tau^B\}}]&=\nu(P_t^{D,B}1)=\sum_{m=0}^\infty\e^{-B(\ll_m)t}\mu(\phi_m)\nu(\phi_m)\\
&=\e^{-B(\ll_0)t}\mu(\phi_0)\nu(\phi_0)+\sum_{m=1}^\infty\e^{-B(\ll_m)t}\mu(\phi_m)\nu(\phi_m).
\end{split}\end{equation*}
Combining this with \eqref{N12}, we have
\begin{align*}
&\ff{\d \E^\nu[\dd_{X_s^B}1_{\{t<\si_\tau^B\}}]}{\d \mu_0}-\E^\nu[1_{\{t<\si_\tau^B\}}]\\
&=\e^{-B(\ll_0)s}\nu(\phi_0)\sum_{m=1}^\infty\e^{-B(\ll_m)(t-s)}\mu(\phi_m)\phi_m\phi_0^{-1}\\
&\quad+\e^{-B(\ll_0)(t-s)}\mu(\phi_0)\sum_{m=1}^\infty\e^{-B(\ll_m)s}\nu(\phi_m)\phi_m\phi_0^{-1}\\
&\quad+\left(\sum_{m=1}^\infty\e^{-B(\ll_m)s}\nu(\phi_m)\phi_m\phi_0^{-1}\right)
\left(\sum_{n=1}^\infty\e^{-B(\ll_n)(t-s)}\mu(\phi_n)\phi_n\phi_0^{-1}\right)\\
&\quad-\sum_{m=1}^\infty\e^{-B(\ll_m)t}\mu(\phi_m)\nu(\phi_m).
\end{align*}
Noting that
\begin{equation*}\begin{split}
{\rm I}&:=\int_0^t \e^{-B(\ll_0)s}\nu(\phi_0)\sum_{m=1}^\infty\e^{-B(\ll_m)(t-s)}\mu(\phi_m)\phi_m\phi_0^{-1}\,\d s\\
&=\sum_{m=1}^\infty\ff{\nu(\phi_0)\mu(\phi_m)
(\e^{-B(\ll_0)t}-\e^{-B(\ll_m)t})}{B(\ll_m)-B(\ll_0)}\phi_m\phi_0^{-1},
\end{split}\end{equation*}
and
\begin{equation*}\begin{split}
{\rm II}&:=\int_0^t\e^{-B(\ll_0)(t-s)}\mu(\phi_0)\sum_{m=1}^\infty\e^{-B(\ll_m)s}\nu(\phi_m)\phi_m\phi_0^{-1}\,\d s\\
&=\sum_{m=1}^\infty\ff{\mu(\phi_0)\nu(\phi_m)(\e^{-B(\ll_0)t}-\e^{-B(\ll_m)t})}{B(\ll_m)-B(\ll_0)}\phi_m\phi_0^{-1},
\end{split}\end{equation*}
we have
\begin{equation*}\begin{split}
{\rm I}+{\rm II}&=\sum_{m=1}^\infty\ff{\big[\mu(\phi_0)\nu(\phi_m)+\nu(\phi_0)\mu(\phi_m)\big]\e^{-B(\ll_0)t}}{B(\ll_m)-B(\ll_0)}\phi_m\phi_0^{-1}\\
&\quad-\sum_{m=1}^\infty\ff{\big[\mu(\phi_0)\nu(\phi_m)+\nu(\phi_0)\mu(\phi_m)\big]\e^{-B(\ll_m)t}}{B(\ll_m)-B(\ll_0)}\phi_m\phi_0^{-1}.
\end{split}\end{equation*}

Therefore, we can write
\begin{align*}
\ff{\d \mu_{t}^{B,\nu}}{\d \mu_0}&=\ff 1 {t\E^\nu[1_{\{t<\si_\tau^B\}}]}\int_0^t\ff{\d \E^\nu[\dd_{X_s^B}1_{\{t<\si_\tau^B\}}]}{\d \mu_0}\,\d s\\
&=\ff 1 {t\E^\nu[1_{\{t<\si_\tau^B\}}]}\int_0^t\Big(\ff{\d \E^\nu[\dd_{X_s^B}1_{\{t<\si_\tau^B\}}]}{\d \mu_0}-\E^\nu[1_{\{t<\si_\tau^B\}}]\Big)\,\d s+1\\
&=\ff 1 {t\E^\nu[1_{\{t<\si_\tau^B\}}]}\left({\rm I}+{\rm II}+\int_0^t  \xi_s\,\d s\right)+1\\
&=\tilde{\rho}_t^{B,\nu}-A_t+\ff 1 {t\E^\nu[1_{\{t<\si_\tau^B\}}]}\int_0^t \xi_s\,\d s+1,
\end{align*}
where $\tilde{\rho}_t^{B,\nu}$, $A_t$ and $\xi_s$ are explicitly expressed in \eqref{RAI}.
\end{proof}

Indeed, we have the following useful integral representation of $\xi_s$.
\begin{rem} Let $\nu\in\scr{P}_0$ and $t>0$. Then
\begin{equation}\begin{split}\label{int-rep-xi}
\xi_s&=\int_0^\infty\int_0^\infty\e^{-\ll_0(k+l)}
[\eta_l^\nu-\nu(\phi_0)][P_k^0\phi_0^{-1}-\mu(\phi_0)] \,\P(S_{t-s}^B\in\d k)\P(S_s^B\in\d l)\\
&\quad-\int_0^\infty \e^{-\ll_0 l}\nu\big(\phi_0\{P_l^0\phi_0^{-1}-\mu(\phi_0)\}\big)\,\P(S_t^B\in\d l),\quad 0<s\leq t.
\end{split}\end{equation}
\begin{proof} Indeed, according to \eqref{SG0},
\begin{equation*}\begin{split}
P_t^0(\phi_0^{-1})&=\sum_{m=0}^\infty\mu_0(\phi_m\phi_0^{-2})\e^{-(\ll_m-\ll_0)t}\phi_m\phi_0^{-1}\\
&=\mu(\phi_0)+\sum_{m=1}^\infty\mu(\phi_m)\e^{-(\ll_m-\ll_0)t}\phi_m\phi_0^{-1},
\end{split}\end{equation*}
which together with \eqref{PSI} and \eqref{LT} implies that
\begin{align*}&\int_0^\infty\int_0^\infty\e^{-\ll_0(k+l)}[\eta_l^\nu-\nu(\phi_0)][P_k^0\phi_0^{-1}-\mu(\phi_0)]\,\P(S_{t-s}^B\in\d k)\P(S_s^B\in\d l)\\
&=\left(\sum_{m=1}^\infty\e^{-B(\ll_m)s}\nu(\phi_m)\phi_m\phi_0^{-1}\right)
\left(\sum_{n=1}^\infty\e^{-B(\ll_n)(t-s)}\mu(\phi_n)\phi_n\phi_0^{-1}\right),\quad 0<s\leq t,
\end{align*}
and
$$\int_0^\infty \e^{-\ll_0 l}\nu\big(\phi_0\{P_l^0\phi_0^{-1}-\mu(\phi_0)\}\big)\,\P(S_t^B\in\d l)=\sum_{m=1}^\infty\e^{-B(\ll_m)t}\mu(\phi_m)\nu(\phi_m).$$
Thus, we immediately obtain \eqref{int-rep-xi} from the definition of $\xi_s$ in \eqref{RAI}.
\end{proof}
\end{rem}

In the next proposition, we establish \eqref{1T1.1} for particular initial distributions.
\begin{prp}\label{Th3.4}
Assume that $B\in\mathbf{B}^\aa$ for some $\aa\in(0,1]$. Then, for every  $\nu\in\scr{P}_0$ satisfying that $\nu=h\mu$ and
$\|h\phi_0^{-1}\|_\infty<\infty$,
\begin{equation*}
\limsup_{t\to\infty}\{t^2\W_2(\mu_t^{B,\nu},\mu_0)^2\}\le \ff{4}{[\mu(\phi_0)\nu(\phi_0)]^2}\sum_{m=1}^\infty\ff{[\mu(\phi_0)\nu(\phi_m)+\nu(\phi_0)\mu(\phi_m)]^2}{(\ll_m-\ll_0)[B(\ll_m)-B(\ll_0)]^2}.
\end{equation*}
\end{prp}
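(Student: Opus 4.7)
The plan is to apply \eqref{W2UB1} with $f=\d\mu_t^{B,\nu}/\d\mu_0=1+\rho_t^{B,\nu}$ from Lemma \ref{density-h}, exploiting the splitting $\rho_t^{B,\nu}=\tilde\rho_t^{B,\nu}-A_t+\tilde R_t$, where $\tilde R_t:=\big(t\E^\nu[1_{\{t<\si_\tau^B\}}]\big)^{-1}\int_0^t\xi_s\,\d s$. Each of the three pieces is $\mu_0$-mean-zero, so $(-\mathcal{L}_0)^{-1}$ is well-defined on each, and by linearity together with the triangle inequality for $u\mapsto\big(\int_M|\nn u|^2\d\mu_0\big)^{1/2}$,
\begin{align*}
\|\nn(-\mathcal{L}_0)^{-1}\rho_t^{B,\nu}\|_{L^2(\mu_0)}\le{}&\|\nn(-\mathcal{L}_0)^{-1}\tilde\rho_t^{B,\nu}\|_{L^2(\mu_0)}\\
&+\|\nn(-\mathcal{L}_0)^{-1}A_t\|_{L^2(\mu_0)}+\|\nn(-\mathcal{L}_0)^{-1}\tilde R_t\|_{L^2(\mu_0)}.
\end{align*}
I will show that $t^2$ times the first squared norm converges to $I/4$, while $t^2$ times each of the remaining two squared norms vanishes; combined with \eqref{W2UB1} this yields $\limsup_{t\to\infty}t^2\W_2(\mu_t^{B,\nu},\mu_0)^2\le 4\cdot I/4=I$.

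By \eqref{EIG0}, $\{\phi_m\phi_0^{-1}\}_{m\in\N_0}$ is an orthonormal eigenbasis of $-\mathcal{L}_0$ in $L^2(\mu_0)$ with eigenvalues $\ll_m-\ll_0$. Since the series in \eqref{RAI} for $\tilde\rho_t^{B,\nu}$ and $A_t$ both start from $m=1$, $(-\mathcal{L}_0)^{-1}$ acts diagonally on each coefficient, and the identity $\int_M|\nn u|^2\d\mu_0=\mu_0(u(-\mathcal{L}_0)u)$ combined with Parseval yields
$$\|\nn(-\mathcal{L}_0)^{-1}\tilde\rho_t^{B,\nu}\|_{L^2(\mu_0)}^2=\ff{\e^{-2B(\ll_0)t}}{\{t\E^\nu[1_{\{t<\si_\tau^B\}}]\}^2}\sum_{m=1}^\infty\ff{[\mu(\phi_0)\nu(\phi_m)+\nu(\phi_0)\mu(\phi_m)]^2}{[B(\ll_m)-B(\ll_0)]^2(\ll_m-\ll_0)},$$
and an identical formula with $\e^{-2B(\ll_m)t}$ replacing $\e^{-2B(\ll_0)t}$ in each summand holds for $\|\nn(-\mathcal{L}_0)^{-1}A_t\|_{L^2(\mu_0)}^2$. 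Multiplying by $t^2$ and invoking \eqref{equ-B1} gives limit $I/4$ in the first case. For the $A_t$ piece, assuming $I<\infty$ (else the conclusion is vacuous), normalising by $\E^\nu[1_{\{t<\si_\tau^B\}}]\sim\mu(\phi_0)\nu(\phi_0)\e^{-B(\ll_0)t}$ endows each summand of $t^2\|\nn(-\mathcal{L}_0)^{-1}A_t\|_{L^2(\mu_0)}^2$ with a factor $\e^{-2[B(\ll_m)-B(\ll_0)]t}\to 0$; dominated convergence with majorant proportional to the finite sum defining $I$ yields vanishing in the limit.

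The main obstacle is establishing $t^2\|\nn(-\mathcal{L}_0)^{-1}\tilde R_t\|_{L^2(\mu_0)}^2\to 0$. The eigenfunction approach of the previous paragraph fails because $\xi_s$ (see \eqref{RAI}) contains products $(\phi_m\phi_0^{-1})(\phi_n\phi_0^{-1})$ that are \emph{not} eigenfunctions of $\mathcal{L}_0$ --- precisely the non-local difficulty flagged in Remark 1.2(a). Since $\mu_0((\phi_m\phi_0^{-1})(\phi_n\phi_0^{-1}))=\dd_{mn}$ gives $\mu_0(\xi_s)=0$, we have $\mu_0(\tilde R_t)=0$, and the Poincar\'e inequality for $-\mathcal{L}_0$ in $L^2(\mu_0)$ supplies
$$\|\nn(-\mathcal{L}_0)^{-1}\tilde R_t\|_{L^2(\mu_0)}^2=\mu_0\big(\tilde R_t\,(-\mathcal{L}_0)^{-1}\tilde R_t\big)\le(\ll_1-\ll_0)^{-1}\|\tilde R_t\|_{L^2(\mu_0)}^2.$$
To bound $\|\tilde R_t\|_{L^2(\mu_0)}$, I use the integral representation \eqref{int-rep-xi}. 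With $\nu=h\mu$ and $\|h\phi_0^{-1}\|_\infty<\infty$, a direct computation using the self-adjointness of $p_l^0$ w.r.t. $\mu_0$ shows $\eta_l^\nu=P_l^0(h\phi_0^{-1})$ and $\nu(\phi_0)=\mu_0(h\phi_0^{-1})$, whence \eqref{PI0} with $p=\infty$ gives $\|\eta_l^\nu-\nu(\phi_0)\|_\infty\le\alpha_2\|h\phi_0^{-1}\|_\infty\e^{-(\ll_1-\ll_0)l}$; similarly \eqref{PI0} with $p=2$ and \eqref{PHI} yield $\|P_k^0\phi_0^{-1}-\mu(\phi_0)\|_{L^2(\mu_0)}\le C\e^{-(\ll_1-\ll_0)k}$, both valid for every $k,l\ge 0$. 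Plugging these into \eqref{int-rep-xi} and using the Laplace-transform identity $\int_0^\infty\e^{-\ll_1 r}\P(S_u^B\in\d r)=\e^{-uB(\ll_1)}$, one finds $\|\xi_s\|_{L^2(\mu_0)}\le C\e^{-tB(\ll_1)}$ uniformly in $s\in[0,t]$. Hence $\|\tilde R_t\|_{L^2(\mu_0)}\le C\e^{-tB(\ll_1)}/\E^\nu[1_{\{t<\si_\tau^B\}}]$, which by \eqref{equ-B1} decays like $\e^{-t[B(\ll_1)-B(\ll_0)]}$, so $t^2\|\nn(-\mathcal{L}_0)^{-1}\tilde R_t\|_{L^2(\mu_0)}^2\to 0$ exponentially fast. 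Collecting the three limits through \eqref{W2UB1} completes the proof.
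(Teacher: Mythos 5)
Your proof is correct and follows the same skeleton as the paper's: inequality \eqref{W2UB1} applied to the density $1+\rho_t^{B,\nu}$ from Lemma \ref{density-h}, the same three-term decomposition, the exact spectral computation for the $\tilde{\rho}_t^{B,\nu}$ and $A_t$ pieces, and the same $\e^{-B(\ll_1)t}$ bound on $\|\xi_s\|_{L^2(\mu_0)}$ obtained from \eqref{int-rep-xi}, the uniform bound on $\eta_l^\nu-\nu(\phi_0)$ coming from $\|h\phi_0^{-1}\|_\infty<\infty$, and the Laplace transform \eqref{LT} of the subordinator. The one place you genuinely deviate is in passing from $\|\xi_s\|_{L^2(\mu_0)}$ to the gradient norm of $(-\L_0)^{-1}\tilde R_t$: you invoke the spectral-gap (Poincar\'e) bound $\mu_0\big(g(-\L_0)^{-1}g\big)\le(\ll_1-\ll_0)^{-1}\|g\|_{L^2(\mu_0)}^2$ for mean-zero $g$, whereas the paper writes $(-\L_0)^{-1/2}=\frac{2}{\sqrt{\pi}}\int_0^\infty P_{r^2}^0\,\d r$ and combines Minkowski's inequality with \eqref{PI0}. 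Your route is shorter and entirely elementary, and nothing is lost since both yield the same $\e^{-2[B(\ll_1)-B(\ll_0)]t}$ decay. Two further cosmetic differences: you use the plain triangle inequality for the gradient $L^2$-norm and pass to the limit term by term, while the paper uses a weighted $(1+\dd)$, $(1+\dd^{-1})$ splitting of the squares and sends $\dd\to0$ at the end; and your dominated-convergence treatment of the $A_t$ term rests on the reduction to $I<\infty$, which you correctly note is harmless because the claimed inequality is vacuous when $I=\infty$.
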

\begin{proof}
Since $\ff{\d\mu_t^{B,\nu}}{\d\mu_0}=1+\rho_t^{B,\nu}$, applying \eqref{W2UB1}, we have
\begin{equation*}
\W_2(\mu_t^{B,\nu},\mu_0)^2\le 4\int_M|\nn(-\L_0)^{-1}\rho_t^{B,\nu}|^2\d\mu_0,\quad t>0.
\end{equation*}
According to Lemma \ref{density-h} and the triangle inequality of $\|\cdot\|_{L^2(\mu_0)}$, we deduce that for any $\dd>0$,
\begin{equation}\begin{split}\label{2Th3.4}
\W_2(\mu_t^{B,\nu},\mu_0)^2&\le 4(1+\dd)\int_M|\nn(-\L_0)^{-1}\tilde{\rho}_t^{B,\nu}|^2\,\d\mu_0\\
&\quad+8(1+\dd^{-1})\int_M|\nn(-\L_0)^{-1} A_t|^2\,\d\mu_0\\
&\quad+8(1+\dd^{-1})\int_M\left|\nn(-\L_0)^{-1}\ff{1}{t\E^\nu[1_{\{t<\si_\tau^B\}}]}\int_0^t \xi_s\,\d s\right|^2\,\d\mu_0.
\end{split}\end{equation}

Since $-\mathcal{L}_0(\phi_m\phi_0^{-1})=(\ll_m-\ll_0)\phi_m\phi_0^{-1}$ and $\|\phi_m\phi_0^{-1}\|_{L^2(\mu_0)}=1$ for every $m\in\mathbb{N}$, by the integration-by-parts formula, we have
\begin{align*}
\int_M|\nn(-\mathcal{L}_0)^{-1}(\phi_m\phi_0^{-1})|^2\,\d\mu_0
=\int_M\phi_m\phi_0^{-1}(-\mathcal{L}_0)^{-1}(\phi_m\phi_0^{-1})\,\d\mu_0=\ff 1 {\ll_m-\ll_0},\quad m\in\mathbb{N}.
\end{align*}
Recalling the definition of $\tilde{\rho}_{t}^{B,\nu}$ and $A_t$ in \eqref{RAI}, it is easy to see that, for every $t>0$,
\begin{equation}\label{3Th3.4}
\int_M|\nn(-\L_0)^{-1}\tilde{\rho}_t^{B,\nu}|^2\d\mu_0=\ff{\e^{-2B(\ll_0)t}}{(t\E^\nu[1_{\{t<\si_\tau^B\}}])^2}\sum_{m=1}^\infty
\ff{[\mu(\phi_0)\nu(\phi_m)+\nu(\phi_0)\mu(\phi_m)]^2}{(\ll_m-\ll_0)[B(\ll_m)-B(\ll_0)]^2},
\end{equation}
and
\begin{equation}\label{4Th3.4}
\int_M|\nn(-\L_0)^{-1}A_t|^2\d\mu_0=\ff 1 {(t\E^\nu[1_{\{t<\si_\tau^B\}}])^2}\sum_{m=1}^\infty
\ff{[\mu(\phi_0)\nu(\phi_m)+\nu(\phi_0)\mu(\phi_m)]^2\e^{-2B(\ll_m)t}}{(\ll_m-\ll_0)[B(\ll_m)-B(\ll_0)]^2}.
\end{equation}

By the definition of $\xi_s$ in \eqref{RAI}, since $\{\phi_m\phi_0^{-1}\}_{m\in\mathbb{N}_0}$ is an orthonormal basis in $L^2(\mu_0)$,
we have
\begin{equation}\begin{split}\label{6Th3.4}
\mu_0\big(\xi_s\big)&=\mu_0\Big(\sum_{m=1}^\infty\sum_{n=1}^\infty\e^{-B(\ll_m)s}\e^{-B(\ll_n)(t-s)}
\nu(\phi_m)\mu(\phi_n)\phi_m\phi_0^{-1}\phi_n\phi_0^{-1}\Big)\\
&\quad-\sum_{m=1}^\infty\e^{-B(\ll_m)t}\mu(\phi_m)\nu(\phi_m)\\
&=\sum_{m=1}^\infty\e^{-B(\ll_m)s}\e^{-B(\ll_m)(t-s)}\mu(\phi_m)\nu(\phi_m)-\sum_{m=1}^\infty\e^{-B(\ll_m)t}\mu(\phi_m)\nu(\phi_m)\\
&=0,\quad 0<s\leq t.
\end{split}\end{equation}
By the fact that $(-\L_0)^{-\ff 1 2}=\ff 2 {\sqrt{\pi}}\int_0^\infty P_{s^2}^0\,\d s$ and Minkowski's inequality, we obtain that
\begin{equation}\begin{split}\label{5Th3.4}
&\int_M\left|\nn(-\L_0)^{-1}\ff{1}{t\E^\nu[1_{\{t<\si_\tau^B\}}]}\int_0^t \xi_s\,\d s\right|^2\,\d\mu_0\\
&=\ff 1 {(t\E^\nu[1_{\{t<\si_\tau^B\}}])^2}\int_M\left|\ff 2 {\sqrt{\pi}}\int_0^\infty\int_0^t P_{r^2}^0\xi_s\,\d s\d r\right|^2\,\d\mu_0\\
&\le \ff 4 {\pi (t\E^\nu[1_{\{t<\si_\tau^B\}}])^2}\left(\int_0^\infty\int_0^t\|P_{r^2}^0\xi_s\|_{L^2(\mu_0)}\,\d s\d r\right)^2,\quad t>0.
\end{split}\end{equation}
Then we can apply \eqref{PI0} to get that
\begin{equation}\begin{split}\label{7Th3.4}
\|P_{r^2}^0\xi_s\|_{L^2(\mu_0)}\le \aa_2\e^{-(\ll_1-\ll_0)r^2}\|\xi_s\|_{L^2(\mu_0)},\quad r>0,
\end{split}\end{equation}
where $\aa_2$ is the same constant in \eqref{PI0}. Recalling the integral representation of $\xi_s$ in \eqref{int-rep-xi}, we have
\begin{equation}\begin{split}\label{8Th3.4}
\|\xi_s\|_{L^2(\mu_0)}&\le\int_0^\infty\int_0^\infty\e^{-\ll_0(k+l)}
\|[\eta_l^\nu-\nu(\phi_0)][P_k^0\phi_0^{-1}-\mu(\phi_0)]\|_{L^2(\mu_0)}\,\P(S_{t-s}^B\in\d k)\P(S_s^B\in\d l)\\
&\quad+\int_0^\infty \e^{-\ll_0 l}|\nu\big(\phi_0\{P_l^0\phi_0^{-1}-\mu(\phi_0)\}\big)|\,\P(S_t^B\in\d l),\quad 0<s\leq t.
\end{split}\end{equation}
Since $\nu=h\mu\in\scr{P}_0$, $\|\phi_0^{-1}\|_{L^2(\mu_0)}=1$, $\eta_s^\nu=P_s^0(h\phi_0^{-1})$ for any $s>0$, and
$\|h\phi_0^{-1}\|_{L^2(\mu_0)}\leq\|h\phi_0^{-1}\|_\infty$, \eqref{PI0} yields that for every $k,l>0$,
\begin{equation}\begin{split}\label{9Th3.4}
&\|[\eta_l^\nu-\nu(\phi_0)][P_k^0\phi_0^{-1}-\mu(\phi_0)]\|_{L^2(\mu_0)}\\
&\le\|\eta_l^\nu-\nu(\phi_0)\|_\infty\|P_k^0\phi_0^{-1}-\mu(\phi_0)\|_{L^2(\mu_0)}\\
&=\|P_l^0(h\phi_0^{-1})-\mu_0(h\phi_0^{-1})\|_\infty\|P_k^0\phi_0^{-1}-\mu(\phi_0)\|_{L^2(\mu_0)}\\
&\le \e^{-(\ll_1-\ll_0)l}\|h\phi_0^{-1}\|_\infty\e^{-(\ll_1-\ll_0)k}\|\phi_0^{-1}\|_{L^2(\mu_0)}\\
&=\|h\phi_0^{-1}\|_\infty\e^{-(\ll_1-\ll_0)(k+l)},
\end{split}\end{equation}
and
\begin{equation}\begin{split}\label{10Th3.4}
|\nu(\phi_0\{\mu(\phi_0)-P_l^0(\phi_0^{-1})\})|&\le\|h\phi_0^{-1}\|_{L^2(\mu_0)}\|P_l^0\phi_0^{-1}-\mu(\phi_0)\|_{L^2(\mu_0)}\\
&\le \|h\phi_0^{-1}\|_\infty\e^{-(\ll_1-\ll_0)l}.
\end{split}\end{equation}
Hence, by \eqref{LT}, \eqref{8Th3.4}, \eqref{9Th3.4} and \eqref{10Th3.4}, we have
\begin{equation*}\begin{split}\label{10+Th3.4}
\|\xi_s\|_{L^2(\mu_0)}&\le\|h\phi_0^{-1}\|_\infty\int_0^\infty\int_0^\infty\e^{-\ll_0(k+l)}\e^{-(\ll_1-\ll_0)(k+l)}\,\P(S_{t-s}^B\in\d k)\P(S_s^B\in\d l)\\
&\quad+\|h\phi_0^{-1}\|_\infty\int_0^\infty\e^{-\ll_0 l}\e^{-(\ll_1-\ll_0)l}\,\P(S_t^B\in\d l)\\
&=2\|h\phi_0^{-1}\|_\infty\e^{-B(\ll_1)t},\quad s>0.
\end{split}\end{equation*}
Substituting this into \eqref{7Th3.4}, we have
\begin{equation}\begin{split}\label{10+Th3.4+}
\|P_{r^2}^0\xi_s\|_{L^2(\mu_0)} \le 2\aa_2\|h\phi_0^{-1}\|_\infty \e^{-B(\ll_1)t}\e^{-(\ll_1-\ll_0)r^2} ,\quad r>0.
\end{split}\end{equation}
Thus, according to \eqref{10+Th3.4+}, \eqref{SDHT-U} and \eqref{5Th3.4}, there \textcolor{red}{exist some constants} $c_1,t_0>0$ such that
\begin{equation}\begin{split}\label{xi-bd}
&\int_M\left|\nn(-\L_0)^{-1}\ff{1}{t\E^\nu[1_{\{t<\si_\tau^B\}}]}\int_0^t \xi_s\,\d s\right|^2\d\mu_0\\
&\le c_1\|h\phi_0^{-1}\|_\infty^2 \e^{-2[B(\ll_1)-B(\ll_0)]t},\quad t\geq t_0.
\end{split}\end{equation}

Therefore, by \eqref{2Th3.4}, \eqref{3Th3.4}, \eqref{4Th3.4} and \eqref{xi-bd}, we find constants $c_2,t_0>0$ such that
\begin{equation}\begin{split}\label{11Th3.4}
t^2\W_2(\mu_t^{B,\nu},\mu_0)^2&\le \ff{4(1+\dd)\e^{-2B(\ll_0)t}}{(\E^\nu[1_{\{t<\si_\tau^B\}}])^2}\sum_{m=1}^\infty
\ff{[\mu(\phi_0)\nu(\phi_m)+\nu(\phi_0)\mu(\phi_m)]^2}{(\ll_m-\ll_0)[B(\ll_m)-B(\ll_0)]^2}\\
&\quad+c_2(1+\dd^{-1})\sum_{m=1}^\infty\ff{[\mu(\phi_0)\nu(\phi_m)+\nu(\phi_0)\mu(\phi_m)]^2}{(\ll_m-\ll_0)[B(\ll_m)-B(\ll_0)]^2}
\e^{-2[B(\ll_m)-B(\ll_0)]t}\\
&\quad+c_2(1+\dd^{-1}) \|h\phi_0^{-1}\|_\infty^2 \e^{-2[B(\ll_1)-B(\ll_0)]t},\quad t\geq t_0.
\end{split}\end{equation}
Since $\|h\phi_0^{-1}\|_\infty<\infty$, by \eqref{equ-B} and \eqref{equ-B1},
letting $t\to\infty$ first and then $\dd\to 0$, we arrive at
$$\limsup_{t\to\infty}\{t^2\W_2(\mu_t^{B,\nu},\mu_0)^2\}\le \ff{4}{[\mu(\phi_0)\nu(\phi_0)]^2}\sum_{m=1}^\infty\ff{[\mu(\phi_0)\nu(\phi_m)+\nu(\phi_0)\mu(\phi_m)]^2}{(\ll_m-\ll_0)[B(\ll_m)-B(\ll_0)]^2}.$$
We finish the proof.
\end{proof}

Now we move to prove Theorem \ref{T1.1}.
\begin{proof}[Proof of Theorem \ref{T1.1}.] We divide the proof into two parts.

(1) Let $\nu\in\scr{P}_0$ and let $t\ge s\ge\vv>0$. By the Markov property, the definition of $\eta_t^\nu$, \eqref{SDSG0} and Fubini's theorem,
we have, for any $f\in\scr{B}_b(M)$,
\begin{equation*}\begin{split}\label{4P3.4}
\E^\nu[f(X_s^B)1_{\{t<\si_\tau^B\}}]&=\E^\nu\big(1_{\{\vv<\si_\tau^B\}}\E^{X_\vv^B}[f(X_{s-\vv}^B)1_{\{t-\vv<\si_\tau^B\}}]\big)\\
&=\int_M P_\varepsilon^{D,B}\psi(x)\,\nu(\d x)\\
&=\int_M\int_0^\infty \e^{-\lambda_0 k}\phi_0(x)P_k^0(\psi\phi^{-1}_0)(x)\,\P(S_\varepsilon^B\in\d k)\nu(\d x)\\
&=\int_0^\infty \e^{-\lambda_0 k} \int_M (\eta_k^\nu\phi_0)(y)\psi(y)\,\mu(\d y)\P(S_\varepsilon^B\in\d k)\\
&=\int_0^\infty \e^{-\lambda_0 k} \int_M (\eta_k^\nu\phi_0)(y)\E^y[f(X_{s-\vv}^B)1_{\{t-\vv<\si_\tau^B\}}]\,\mu(\d y)\P(S_\varepsilon^B\in\d k),
\end{split}\end{equation*}
where we set $\psi(\cdot)=\E^\cdot[f(X_{s-\vv}^B)1_{\{t-\vv<\si_\tau^B\}}]$ for convenience. Taking $f=1$ in this equality, we immediately obtain
$$\P^\nu(t<\si_\tau^B)=\int_0^\infty\int_M\e^{-\ll_0k}(\eta_k^\nu\phi_0)(y)\,\P^y(t-\vv<\sigma_\tau^B)\,\mu(\d y)\P(S_{\vv}^B\in\d k).$$
Letting
$$\nu_\vv=\ff{\int_0^\infty\e^{-\ll_0 k}\eta_k^\nu\phi_0\,\P(S_\vv^B\in\d k)\mu}{\int_0^\infty\e^{-\ll_0 k}\mu(\eta_k^\nu\phi_0)\,\P(S_\vv^B\in\d k)}=:h_\vv\mu,$$
by the Markov property again, we have
\begin{equation*}\begin{split}
&\E^\nu[f(X_s^B)|t<\si_\tau^B]=\ff{\E^\nu[f(X_s^B)1_{\{t<\si_\tau^B\}}]}{\P^\nu(t<\si_\tau^B)}\\
&=\ff{\E^{\nu_\vv}[f(X_{s-\vv}^B)1_{\{t-\vv<\si_\tau^B\}}]}{\P^{\nu_\vv}(t-\vv<\si_\tau^B)}=\E^{\nu_\vv}[f(X_{s-\vv}^B)|t-\vv<\si_\tau^B].
\end{split}\end{equation*}
Thus,
\begin{equation}\label{6P3.4}
\hat{\mu}_{r,\vv}^{B,\nu}:=\ff 1{r-\vv}\int_\vv^r\E^\nu(\dd_{X_s^B}|r<\si_\tau^B)\d s=\mu_{r-\vv}^{B,\nu_\vv},\quad r>\vv.
\end{equation}

Noting that for any $k>0$,
\begin{equation*}\begin{split}
\mu(\eta_k^\nu\phi_0)&=\int_M\int_M p_k^0(x,y)\phi_0(x)\phi_0(y)\,\nu(\d x)\mu(\d y)\\
&=\nu(\phi_0 P_k^0 \phi_0^{-1})\ge\nu(\phi_0)\|\phi_0\|_\infty^{-1}\\
&=:\gg\in(0,1],
\end{split}\end{equation*}
we obtain
$$\int_0^\infty\e^{-\ll_0 k}\mu(\eta_k^\nu\phi_0)\,\P(S_\vv^B\in\d k)\ge \gg\e^{-B(\ll_0)\vv},$$
where we applied \eqref{LT}. Then, by \eqref{LT} and \eqref{B-lb},
\begin{equation}\begin{split}\label{7P3.4-}
&\E[(1\wedge S_\vv^B)^{-\ff{d+2}2}]\le 1+\E\left[(S_\vv^B)^{-\ff{d+2}2}\right]\\
&=1+\ff 1 {\GG(\ff{d+2}2)}\int_0^\infty t^{\ff{d+2}2-1}\e^{-\vv B(t)}\d t\\
&\le 1+\ff 1 {\GG(\ff{d+2}2)}\int_0^\infty t^{\ff{d+2}2-1}\e^{\vv (b-at^\aa)}\d t\\
&\le c\left(1+\vv^{-\ff{d+2}{2\aa}}\right),\quad \varepsilon>0,
\end{split}\end{equation}
for some constant $c>0$. Hence, by the intrinsic ultra-contractivity \eqref{IU0}, there exists a constant $c_1>0$ such that, for every $y\in M$,
\begin{equation*}\begin{split}\label{7P3.4}
|(h_\vv\phi_0^{-1})(y)|&=\frac{\int_0^\infty \e^{-\lambda_0 k}\eta_k^\nu(y)\,\P(S_\varepsilon^B\in\d k)}{\int_0^\infty \e^{-\lambda_0 k}\mu(\eta_k^\nu\phi_0)\,\P(S_\varepsilon^B\in\d k)}\\
&\le \gg^{-1}\e^{B(\ll_0)\vv}\int_0^\infty\e^{-\ll_0 k}\int_M p_k^0(x,y)\phi_0(x)\,\nu(\d x)\P(S_\vv^B\in\d k)\\
&\le \gg^{-1}\e^{B(\ll_0)}\|\phi_0\|_\infty\int_0^\infty\e^{-\ll_0 k}\|p_{k}^0\|_{L^\infty(\mu_0\times \mu_0)}\,\P(S_\vv^B\in\d k)\\
&\le c_1\vv^{-\ff{d+2}{2\aa}},\quad \vv\in(0,1),
\end{split}\end{equation*}
and hence,
\begin{equation}\begin{split}\label{7IP3.4}
\|h_\vv\phi_0^{-1}\|_\infty\le c_1\vv^{-\ff{d+2}{2\aa}},\quad \vv\in(0,1).
\end{split}\end{equation}

Thus,  \eqref{11Th3.4} and \eqref{7IP3.4} imply that, there exist constants $c_2>0$ and $t_0\geq1$ such that, for every $\dd>0$ and every $\vv\in(0,1)$,
\begin{equation}\begin{split}\label{8P3.4}
&t^2\W_2(\hat{\mu}_{t,\vv}^{B,\nu},\mu_0)^2=t^2\W_2(\mu_{t-\vv}^{B,\nu_\varepsilon},\mu_0)^2\\
&\le 4I_\varepsilon +c_2(1+\dd^{-1})\sum_{m=1}^\infty
\ff{[\mu(\phi_0)\nu_\vv(\phi_m)+\nu_\vv(\phi_0)\mu(\phi_m)]^2}{(\ll_m-\ll_0)[B(\ll_m)-B(\ll_0)]^2}\e^{-2(B(\ll_m)-B(\ll_0))(t-\vv)}\\
&\quad+c_2(1+\dd^{-1}) \vv^{-\ff{d+2}\aa}\e^{-2[B(\ll_1)-B(\ll_0)](t-\vv)},\quad  t\geq t_0,
\end{split}\end{equation}
where
$$I_\vv:=\ff 1 {\{\mu(\phi_0)\nu_\vv(\phi_0)\}^2}\sum_{m=1}^\infty\ff{\{\nu_\vv(\phi_0)\mu(\phi_m)+\mu(\phi_0)\nu_\vv(\phi_m)\}^2}
{(\ll_m-\ll_0)[B(\ll_m)-B(\ll_0)]^2}.$$

Next, we prove that
\begin{equation*}\lim_{\vv\to 0^+}I_\vv=\tilde{I}:=\ff 1{\{\mu(\phi_0)\nu(\phi_0)\}^2}\sum_{m=1}^\infty\ff{\{\nu(\phi_0)\mu(\phi_m)+\mu(\phi_0)\nu(\phi_m)\}^2}{(\ll_m-\ll_0)[B(\ll_m)-B(\ll_0)]^2}.
\end{equation*}
By Fubini's theorem, \eqref{R}, \eqref{EIG0} and \eqref{PSI}, for every $k>0$, it is direct to verify that
\begin{align*}
&\mu(\eta_k^\nu \phi_0)=\nu(\phi_0 P_k^{0}\phi_0^{-1})=\e^{\ll_0 k}\nu(P_k^D 1),\\
&\mu(\eta_k^\nu \phi_0\phi_m)=\nu\big(\phi_0 P_k^0(\phi_m\phi_0^{-1})\big)=\e^{-(\ll_m-\ll_0)k}\nu(\phi_m),\quad m\in\mathbb{N}_0.
\end{align*}
Let $\vv\in(0,1)$. Then
$$\nu_\vv(\phi_m)=\ff{\int_0^\infty\e^{-\ll_0 k}\mu(\eta_k^\nu \phi_0\phi_m)\,\P(S_\vv^B\in\d k)}{\int_0^\infty\e^{-\ll_0 k}\mu(\eta_k^\nu \phi_0)\,\P(S_\vv^B\in\d k)}=\ff{\e^{-B(\ll_m) \vv}\nu(\phi_m)}{\nu(P_\vv^{D,B}1)},\quad m\in\mathbb{N}_0.$$
It is easy to see that $\lim_{\vv\to 0^+}\nu_\vv(\phi_m)=\nu(\phi_m)$  for each $m\in\mathbb{N}_0$ since $\nu(P_\vv^{D,B}1)=\P^\nu(\varepsilon<\sigma_\tau^B)\rightarrow1$ as $\vv\to 0^+$. By \eqref{SDSG0} and \eqref{LT}, we have
$$\e^{B(\lambda_m)\varepsilon}\nu(P_\varepsilon^{D,B}1)\geq\e^{B(\lambda_0)\varepsilon}\nu\Big(\int_0^\infty \e^{-\lambda_0 s} \phi_0 P_s^0\phi^{-1}_0\,\P(S_\varepsilon^B\in\d s)\Big)\geq\gamma\in(0,1],\quad m\in\mathbb{N}_0,$$
which together with $\nu(P_\varepsilon^{D,B}1)\leq1$ implies that
\begin{equation}\label{9P3.4}
\e^{-B(\ll_m)\vv}|\nu(\phi_m)|\le|\nu_\vv(\phi_m)|\le
\gamma^{-1}|\nu(\phi_m)|,\quad m\in\mathbb{N}_0.
\end{equation}
Since $\{\phi_m\}_{m\in\mathbb{N}_0}$ is an orthonormal sequence in $L^2(\mu)$, by Bessel's inequality, it is clear that
\begin{equation}\label{10P3.4}
\sum_{m=1}^\infty\mu(\phi_m)^2\le1.
\end{equation}

On the one hand, if $\tilde{I}<\infty$, by \eqref{9P3.4} and \eqref{10P3.4}, applying the dominated convergence theorem, we have $\lim_{\vv\to 0^+}I_\vv=\tilde{I}$.
On the other hand, if $I=\infty$, which is equivalent to
$$\sum_{m=1}^\infty\ff{\nu(\phi_m)^2}{(\ll_m-\ll_0)[B(\ll_m)-B(\ll_0)]^2}=\infty.$$
then, by \eqref{9P3.4} and the monotone convergence theorem, we have
\begin{equation*}\begin{split}
&\liminf_{\vv\to 0^+}\sum_{m=1}^\infty\ff{\nu_\vv(\phi_m)^2}{(\ll_m-\ll_0)[B(\ll_m)-B(\ll_0)]^2}\\
&\ge \liminf_{\vv\to 0^+}\sum_{m=1}^\infty\ff{\e^{-2B(\ll_m) \vv}\nu(\phi_m)^2}{(\ll_m-\ll_0)[B(\ll_m)-B(\ll_0)]^2}=\infty.
\end{split}\end{equation*}
Combining this with \eqref{10P3.4} and $\nu_\vv(\phi_0)\to\nu(\phi_0)$ as $\vv\to 0^+$, we have
\begin{align*}
&\liminf_{\vv\to 0^+}I_\vv=\ff 1{\{\mu(\phi_0)\nu(\phi_0)\}^2}\liminf_{\vv\to 0^+} \sum_{m=1}^\infty\ff{\{\nu_\vv(\phi_0)\mu(\phi_m)+\mu(\phi_0)\nu_\vv(\phi_m)\}^2}{(\ll_m-\ll_0)[B(\ll_m)-B(\ll_0)]^2}\\
&\ge \ff 1 {\{\mu(\phi_0)\nu(\phi_0)\}^2}\liminf_{\vv\to 0^+} \sum_{m=1}^\infty\ff{\{\mu(\phi_0)\nu_\vv(\phi_m)\}^2-2\|\phi_0\|_\infty^2\mu(\phi_m)^2}{2(\ll_m-\ll_0)[B(\ll_m)-B(\ll_0)]^2}=\infty.
\end{align*}
Therefore,
\begin{equation}\label{11P3.4}
\lim_{\vv\to 0^+}I_\vv=\tilde{I}.
\end{equation}

Taking $\vv=t^{-2}$ in \eqref{6P3.4}, by \eqref{8P3.4} and \eqref{11P3.4},  we obtain
\begin{equation}\label{11P3.5}
\limsup_{t\to\infty}\{t^2\W_2(\hat{\mu}_{t,t^{-2}}^{B,\nu},\mu_0)^2\}\le I.
\end{equation}
It is easy to verify that
$$\|\hat{\mu}_{t,\vv}^{B,\nu}-\mu_t^{B,\nu}\|_{\textup{var}}\le\int_\vv^t\left(\ff 1 {t-\vv}-\ff 1 t\right)\d s+\frac{1}{t}\int_0^\vv\d s\le 2\vv t^{-1},\quad \varepsilon\in(0,t),$$
where $\|\mu-\nu\|_{\text{var}}=\sup_{f\in\B_1(M)}|\mu(f)-\nu(f)|$ for any $\mu,\nu\in\scr{P}$ by definition. Then, for any $t>1$,
\begin{equation*}\begin{split}
\W_2(\mu_t^{B,\nu},\hat{\mu}_{t,t^{-2}}^{B,\nu})^2&\le D^2\inf_{\pi\in\scr{C}(\mu_t^{B,\nu},\hat{\mu}_{t,t^{-2}}^{B,\nu})}\pi(\{(x,y)\in M\times M: x\neq y\})\\
&=\ff 1 2 D^2\|\hat{\mu}_{t,t^{-2}}^{B,\nu}-\mu_t^{B,\nu}\|_{\textup{var}}\le  D^2 t^{-3}.
\end{split}\end{equation*}
Combining this with the triangle inequality of $\W_2$, we arrive at
\begin{equation*}\begin{split}
t^2\W_2(\mu_t^{B,\nu},\mu_0)^2\leq (1+\delta^{-1})D^2 t^{-1}+(1+\delta)t^2\W_2(\hat{\mu}_{t,t^{-2}}^{B,\nu},\mu_0)^2,\quad\delta>0.
\end{split}\end{equation*}
Letting $t\rightarrow\infty$ first and then $\delta\rightarrow0^+$, by \eqref{11P3.5}, we finally complete the proof of Theorem \ref{T1.1}.

(2) It is obvious that $I\ge 0$. The same argument in \cite[pages 21-22]{{eW1}} shows that $I>0$.

Now we present the proof of $I<\infty$. Recall that
$$\sum_{m=1}^\infty\mu(\phi_m)^2\le 1,$$
and $B(\ll_m-\ll_0)\overset{m}{\sim} B(\ll_m)-B(\ll_0)$. Then it is easy to see that $I<\infty$ is equivalent to
\begin{equation*}
I':=\sum_{m=1}^\infty\ff{\nu(\phi_m)^2}{(\ll_m-\ll_0)(B(\ll_m-\ll_0))^2}<\infty.
\end{equation*}

(a) Let $d<2+4\aa$ for some $\aa\in(0,1]$. Since
$$\eta_k^\nu-\nu(\phi_0)=\sum_{m=1}^\infty\nu(\phi_m)\e^{-(\ll_m-\ll_0)k}\phi_m\phi_0^{-1},\quad k>0,$$
we have
$$(-\mathcal{L}_0)^{-\ff 1 2}\big(\eta_k^\nu-\nu(\phi_0)\big)=\sum_{m=1}^\infty\ff {\nu(\phi_m)}{\sqrt{\ll_m-\ll_0}}\e^{-(\ll_m-\ll_0)k}\phi_m\phi_0^{-1},\quad k>0,$$
and furthermore,
$$\int_0^\infty (-\mathcal{L}_0)^{-\ff 1 2}\big(\eta_k^\nu-\nu(\phi_0)\big)\,\P(S_r^B\in\d k)=\sum_{m=1}^\infty\ff{\nu(\phi_m)}{\sqrt{\ll_m-\ll_0}}\e^{-B(\ll_m-\ll_0)r}\phi_m\phi_0^{-1}.$$
Hence,
$$\int_0^\infty\int_0^\infty(-\mathcal{L}_0)^{-\ff 1 2}\big(\eta_k^\nu-\nu(\phi_0)\big)\,\P(S_r^B\in\d k)\d r=\sum_{m=1}^\infty\ff{\nu(\phi_m)}{(\ll_m-\ll_0)^{\ff 1 2}B(\ll_m-\ll_0)}\phi_m\phi_0^{-1}.$$
Combining this with the fact that $(-\mathcal{L}_0)^{-\ff 1 2}=\frac{2}{\sqrt{\pi}}\int_0^\infty P_{s^2}^0\,\d s$ and
$$(P^0_{s^2+k/2}-\mu_0)\eta_{k/2}^\nu=\sum_{m=1}^\infty \nu(\phi_m)\e^{-(\lambda_m-\lambda_0)k}\e^{-(\lambda_m-\lambda_0)s^2}\phi_m\phi^{-1}_0=P_{s^2}^0\big(\eta_k^\nu-\nu(\phi_0)\big),$$
we have
\begin{equation}\begin{split}\label{a-1}
\sqrt{I'}&=\left\|\int_0^\infty\int_0^\infty(-\mathcal{L}_0)^{-\ff 1 2}\big(\eta_k^\nu-\nu(\phi_0)\big)\,\P(S_r^B\in\d k)\d r\right\|_{L^2(\mu_0)}\\
&=\frac{2}{\sqrt{\pi}}\left\|\int_0^\infty\int_0^\infty\int_0^\infty(P_{s^2+k/2}^0-\mu_0)\eta_{k/2}^\nu\,\d s\P(S_r^B\in\d k)\d r\right\|_{L^2(\mu_0)}\\
&\le \frac{2}{\sqrt{\pi}}\int_0^\infty\d r\int_0^\infty \P(S_r^B\in\d k)\int_0^\infty\|(P_{s^2+k/2}^0-\mu_0)\eta_{k/2}^\nu\|_{L^2(\mu_0)}\,\d s.
\end{split}\end{equation}

It is easy to see that, by the conservativeness, i.e., $P_t^01=1$ for every $t\geq0$, we get $\mu_0(\eta_{k}^\nu)\leq\|\phi_0\|_\infty<\infty$, $k>0$. By
\eqref{HK0}, \eqref{DHK} and the fact that $P_t^D1\leq 1$ for every $t\geq0$, we have
\begin{eqnarray*}
\|\eta_{k}^\nu\phi_0\|_{L^1(\mu)}&=&\int_M\phi_0(y)\,\mu(\d y)\int_Mp_k^0(x,y)\phi_0(x)\,\nu(\d x)\\
&=&\e^{\lambda_0k}\int_M\int_Mp^D_k(x,y)\,\mu(\d y)\nu(\d x)\leq \e^{\lambda_0k},\quad k>0.
\end{eqnarray*}
Combing these with \eqref{R} and \eqref{DPQ}, we derive that
\begin{equation}\begin{split}\label{a-2}
&\|(P_{s^2+k/2}^0-\mu_0)\eta_{k/2}^\nu\|_{L^2(\mu_0)}\leq \mu_0(\eta_{k/2}^\nu) + \|P_{s^2+k/2}^0\eta_{k/2}^\nu\|_{L^2(\mu_0)}\\
&\leq\|\phi_0\|_\infty+ \e^{(s^2+k/2)\lambda_0}\|P_{s^2+k/2}^D(\eta_{k/2}^\nu\phi_0)\|_{L^2(\mu)}\\
&\leq \|\phi_0\|_\infty+ \e^{(s^2+k/2)\lambda_0}\|P_{s^2+k/2}^D \|_{L^1(\mu)\rightarrow L^2(\mu)}\| \eta_{k/2}^\nu\phi_0 \|_{L^1(\mu)}\\
&\leq c_1(s^2+k/2)^{-d/4},\quad s,k\in E,
\end{split}\end{equation}
for some constant $c_1>0$, where $E:=\{(u,v)\in [0,\infty)\times[0,\infty): u^2+v/2<1\}$. Applying \eqref{PQ0}, we obtain
\begin{equation}\begin{split}\label{a-3}
\|(P_{s^2+k/2}^0-\mu_0)\eta_{k/2}^\nu\|_{L^2(\mu_0)}&\leq \|P_{s^2+k/2}^0-\mu_0\|_{L^1(\mu_0)\rightarrow L^2(\mu_0)}\|\eta_{k/2}^\nu\|_{L^1(\mu_0)}\\
&\leq c_2 \e^{-(\lambda_1-\lambda_0)(s^2+k/2)},\quad s,k\in E^c,
\end{split}\end{equation}
for some constant $c_2>0$, where $E^c:=[0,\infty)\times[0,\infty)\setminus E$.

Hence, by \eqref{a-1}--\eqref{a-3} and the elementary estimate that
\begin{eqnarray*}
&&\int_0^\infty(s^2+k)^{-\frac{d}{4}}\e^{-(\lambda_1-\lambda_0)s^2}\,\d s=\frac{1}{2}\int_0^\infty (u+k)^{-\frac{d}{4}}u^{-\frac{1}{2}} \e^{-(\lambda_1-\lambda_0)u}\,\d u\\
&&=\frac{1}{2}\Big(\int_0^k+\int_k^\infty\Big)(u+k)^{-\frac{d}{4}}u^{-\frac{1}{2}} \e^{-(\lambda_1-\lambda_0)u}\,\d u\leq c_3 k^{-\frac{d-2}{4}},\quad k>0,
\end{eqnarray*}
for some constant $c_3>0$, we have
\begin{equation}\begin{split}\label{a-4}
\sqrt{I'}&\leq c_4\int_0^\infty\Big[\int_E  (s^2+ k/2 )^{-\frac{d}{4}}\,\d s\P(S_r^B\in\d k) + \int_{E^c}
                 \e^{-(\lambda_1-\lambda_0)(s^2+k/2)}\,\d s\P(S_r^B\in\d k)\Big]\d r \\
&\leq c_5  \int_0^\infty\d r\int_0^\infty \P(S_r^B\in\d k)\int_0^\infty (s^2+k/2)^{-\ff {d}{4}}\e^{-(\ll_1-\ll_0)(s^2+k/2)}\,\d s\\
&\leq c_6 \int_0^\infty\d r\int_0^\infty k^{-\ff{d-2}{4}}\e^{-(\ll_1-\ll_0)k/2}\,\P(S_r^B\in\d k),
\end{split}\end{equation}
for some constants $c_4,c_5,c_6>0$.

If $d\in\{1,2\}$, then it is clear that, by \eqref{a-4} and \eqref{LT},  there exist constants $a,b>0$ such that
$$\sqrt{I'}\leq    a \int_0^\infty\e^{-B(b)r}\,\d r<\infty.$$
If $d\in (2,2+4\alpha)$, then \eqref{a-4}, the Cauchy--Schwarz inequality, \eqref{LT} and \eqref{7P3.4-} lead to
\begin{equation*}\begin{split}\label{a-5}
\sqrt{I'}&\leq c_6 \int_0^\infty\Big(\E\big[\e^{-(\lambda_1-\lambda_0)S_r^B}\big]\E\big[(S_r^B)^{-(d-2)/2}\big]\Big)^{1/2}\,\d r\\
&\leq c_7 \int_0^\infty r^{-\frac{d-2}{4\alpha}}\e^{-B(\lambda_1-\lambda_0)r/2} \,\d r<\infty,
\end{split}\end{equation*}
for some constant  $c_7>0$. Thus, $I'<\infty$, provided $d<2+4\alpha$.

(b) Let $d\ge 2+4\aa$ for some $\aa\in(0,1]$ and $\nu=h\mu$ with $h\in L^p(\mu)$ for some $p>\ff{2d}{2+d+4\aa}$.
Since $\{\phi_m\phi_0^{-1}\}_{m\in\mathbb{N}_0}$ is an eigenbasis of $L^2(\mu_0)$, we have
$$h\phi_0^{-1}=\sum_{m=0}^{\infty}\mu_0(h\phi_0^{-1}\phi_m\phi_0^{-1})\phi_m\phi_0^{-1}
=\mu(h\phi_0)+\sum_{m=1}^\infty\mu(h\phi_m)\phi_m\phi_0^{-1},$$
and hence,
\begin{equation*}\begin{split}
(-\mathcal{L}_0)^{-\ff{1+2\aa}2}\big(h\phi_0^{-1}-\mu(h\phi_0)\big)
&=\sum_{m=1}^\infty\ff{\mu(h\phi_m)}{(\ll_m-\ll_0)^{\ff{1+2\aa}2}}\phi_m\phi_0^{-1}\\
&=\sum_{m=1}^\infty\ff{\nu(\phi_m)}{(\ll_m-\ll_0)^{\ff{1+2\aa}2}}\phi_m\phi_0^{-1},
\end{split}\end{equation*}
by \eqref{EIG0}. Thus,
\begin{equation}\label{3P4.4}
I'\leq C\|(-\mathcal{L}_0)^{-\ff{1+2\aa}{2}}\big(h\phi_0^{-1}-\mu(h\phi_0)\big)\|_{L^2(\mu_0)}^2,
\end{equation}
for some constant $C>0$. By \eqref{R} and $\nu=h\mu$, we get
$$P_t^0[h\phi_0^{-1}-\mu_0(h\phi_0^{-1})]=\e^{\ll_0 t}\phi_0^{-1}P_t^D[h-\phi_0\mu_0(h\phi_0^{-1})]=\e^{\ll_0 t}\phi_0^{-1}P_t^D[h-\phi_0\nu(\phi_0)].$$
Combing the fact that $(-\mathcal{L}_0)^{-\ff{1+2\aa}2}=c\int_0^\infty P_{t^{2/(1+2\aa)}}^0\d t$ for some constant $c>0$ with \eqref{DPQ}, \eqref{PQ0},
$\|h\phi_0^{-1}\|_{L^1(\mu_0)}<\infty$, and $\|h\|_{L^p(\mu_0)}<\infty$ for some $p\in(\ff{2d}{d+4+2\aa},2)$, we deduce that
\begin{align*}
&\|(-\mathcal{L}_0)^{-\ff{1+2\aa}{2}}\big(h\phi_0^{-1}-\mu(h\phi_0)\big)\|_{L^2(\mu_0)}\\
&\le c\int_0^\infty\|(P_{t^{2/(1+2\aa)}}^0-\mu_0)\{h\phi_0^{-1}\}\|_{L^2(\mu_0)}\,\d t\\
&\le c\int_0^1\e^{\ll_0 t^{2/(1+2\aa)}}\|\phi_0^{-1}P_{t^{2/(1+2\aa)}}^D\{h-\nu(\phi_0)\phi_0\}\|_{L^2(\mu_0)}\,\d t \\
&\quad +c\|h\phi_0^{-1}\|_{L^1(\mu_0)}\int_1^\infty\|P_{t^{2/(1+2\aa)}}^0-\mu_0\|_{L^1(\mu_0)\to L^2(\mu_0)}\,\d t\\
&=c\int_0^1\e^{\ll_0 t^{2/(1+2\aa)}}\|P_{t^{2/(1+2\aa)}}^D\{h-\nu(\phi_0)\phi_0\}\|_{L^2(\mu)}\,\d t\\
&\quad+c\|h\phi_0\|_{L^1(\mu)}\int_1^\infty\|P_{t^{2/(1+2\aa)}}^0-\mu_0\|_{L^1(\mu_0)\to L^2(\mu_0)}\,\d t\\
&\le c \int_0^1\|P_{t^{2/(1+2\aa)}}^D\|_{L^p(\mu)\to L^2(\mu)}\|h-\nu(\phi_0)\phi_0\|_{L^p(\mu)}\,\d t
+c \int_1^\infty\e^{-(\ll_1-\ll_0)t^{2/(1+2\aa)}}\,\d t\\
&\le c \int_0^1 t^{-\ff{d(2-p)}{2p(1+2\aa)}}\,\d t+c<\infty,
\end{align*}
since $p>\ff{2d}{d+2+4\aa}\geq1$ implies $\ff{d(2-p)}{2p(1+2\aa)}<1$, where the positive constant $c$ may vary from line to line.  If $p\geq2$, then the similar method also leads to $\|(-\mathcal{L}_0)^{-\ff{1+2\aa}{2}}\big(h\phi_0^{-1}-\mu(h\phi_0)\big)\|_{L^2(\mu_0)}<\infty$. Combining these with \eqref{3P4.4}, we  prove the desired result.

(c) Let $d\ge 2+4\aa$ for some  $\aa\in(0,1]$ and $\nu=h\mu$ with $h\phi_0^{-1}\in L^q(\mu_0)$ for some $q>\ff{2(d+2)}{d+4+4\aa}$. By \eqref{3P4.4} and \eqref{PQ0}, if $q<2$, then we have
\begin{align*}
\sqrt{I'}&\leq C\|(-\mathcal{L}_0)^{-\ff{1+2\aa}{2}}\big(h\phi_0^{-1}-\mu(h\phi_0)\big)\|_{L^2(\mu_0)}\\
&\le C\int_0^\infty\|(P_{t^{2/(1+2\aa)}}^0-\mu_0)\{h\phi_0^{-1}\}\|_{L^2(\mu_0)}\,\d t\\
&\le C\int_0^\infty\|P_{t^{2/(1+2\aa)}}^0-\mu_0\|_{L^q(\mu_0)\to L^2(\mu_0)}\|h\phi_0^{-1}\|_{L^q(\mu_0)}\,\d t\\
&\le C\int_0^\infty\{1\wedge t\}^{-\ff{(d+2)(2-q)}{2(1+2\aa)q}}\e^{-(\ll_1-\ll_0)t^{2/(1+2\aa)}}\d t<\infty,
\end{align*}
since $q>\ff{2(d+2)}{d+4+4\aa}$ implies $\ff{(d+2)(2-q)}{d+4+4\aa}<1$, where the positive constant $C$ may vary from line to line. If $q\geq2$, then similarly, we also have $I'<\infty$.

Therefore, the  proof is completed.
\end{proof}

\section{Proofs of Theorem \ref{T1.2}}
This section is divided into two parts. In the first part, we present the proof for the upper bound in Theorem \ref{T1.2}, and in the second part, we prove the lower bound in Theorem \ref{T1.2}.

We begin by introducing some frequently used notations. Let $t,\beta>0$ and $\nu\in\scr{P}_0$. Recall the definition of $\rho_t^{B,\nu}$ and $\tilde{\rho}_t^{B,\nu}$ in Lemma \ref{density-h}. Set
\begin{equation*}
\mu_{t,\beta}^{B,\nu}:=(1+\rho_{t,\beta}^{B,\nu})\mu_0,\quad \rho_{t,\beta}^{B,\nu}:=P_{t^{-\beta}}^0\rho_t^{B,\nu}.
\end{equation*}
 Note that $\mu_{t,\beta}^{B,\nu}$  should be regarded as the regularized version of $\mu_{t}^{B,\nu}$. Similarly, set
\begin{equation*}
\tilde{\mu}_{t,\beta}^{B,\nu}:=(1+\tilde{\rho}_{t,\beta}^{B,\nu})\mu_0,\quad\tilde{\rho}_{t,\beta}^{B,\nu}:=P_{t^{-\beta}}^0\tilde{\rho}_t^{B,\nu}.
\end{equation*}

\subsection{Upper bounds}
Since $\mu_t^{B,\nu}$ is a probability measure, it is easy to see that $\mu_{t,\beta}^{B,\nu}\in\scr{P}_0$. From Lemma \ref{Le4.1} below, for every $\beta\in(0,\ff 2{2d-2\aa+1})$, there exists a constant $t_0>0$ such that $\tilde{\mu}_{t,\beta}^{B,\nu}$ is a probability measure for every $t\geq t_0$. Applying the triangle inequality of $\W_2$, we have
\begin{equation}\begin{split}\label{TRI}
\W_2(\mu_t^{B,\nu},\mu_0)^2&\le(1+\delta)\W_2(\tilde{\mu}_{t,\beta}^{B,\nu},\mu_0)^2+(1+\delta^{-1})\W_2(\tilde{\mu}_{t,\beta}^{B,\nu},\mu_t^{B,\nu})^2\\
&\leq (1+\delta)\W_2(\tilde{\mu}_{t,\beta}^{B,\nu},\mu_0)^2+2(1+\delta^{-1})\W_2(\tilde{\mu}_{t,\beta}^{B,\nu},\mu_{t,\beta}^{B,\nu})^2\\
&\quad+2(1+\delta^{-1})\W_2(\mu_{t,\beta}^{B,\nu},\mu_{t}^{B,\nu})^2,\quad t\geq t_0,\,\delta>0.
\end{split}\end{equation}
Clearly, in order to get the upper bound of $\W_2(\mu_t^{B,\nu},\mu_0)$, we need to estimate the three terms in the right hand side of \eqref{TRI}. The term $\W_2(\tilde{\mu}_{t,\beta}^{B,\nu},\mu_0)$ should be regarded as the dominant term, and the others as error terms.

To bound $\W_2(\tilde{\mu}_{t,\beta}^{B,\nu},\mu_0)$ from above,  the crucial tool is the following inequality: for any probability density functions $f_0$ and $f_1$ w.r.t. $\mu_0$,
\begin{equation}\label{W2UB}
\W_2(f_0\mu_0,f_1\mu_0)^2\leq\int_M\frac{|\nabla (-\mathcal{L}_0)^{-1}(f_0-f_1)|^2}{\mathscr{M}(f_0,f_1)}\,\d\mu_0,
\end{equation}
where the function $\mathscr{M}:[0,\infty)\times[0,\infty)\rightarrow[0,\infty)$ is defined as
$$\mathscr{M}(u,v)=\frac{u-v}{\log u-\log v}1_{\{u\wedge v>0\}},\quad u,v\in[0,\infty),$$
and $\mathscr{M}(u,v)$ is known as the logarithmic mean of $u$ and $v$. Refer to \cite[Proposition 2.3]{eAST} for \eqref{W2UB} and see also \cite[Theorem A.1]{eW3} for general $\W_p$ with $p\geq1$. To bound $\W_2(\tilde{\mu}_{t,\beta}^{B,\nu},\mu_{t,\beta}^{B,\nu})$, we use the total variation norm since $M$ is compact; see Lemma \ref{Lemma4.4} below. For $\W_2(\mu_{t,\beta}^{B,\nu},\mu_t^{B,\nu})$, we apply inequality \eqref{wang-inequ} below.

In order to apply \eqref{W2UB}, the following estimate on $\tilde{\rho}_{t,\beta}^{B,\nu}$ is important.
\begin{lem}\label{Le4.1}
For every $\beta>0$, there exist constants $c,t_0>0$ such that
\begin{equation}\label{1Le4.1}
\|\tilde{\rho}_{t,\beta}^{B,\nu}\|_\infty\leq\ff{c}{\nu(\phi_0)}t^{\ff{(2d-2\aa+1)\beta}2-1},\quad t\geq t_0,\, \nu\in\scr{P}_0.
\end{equation}
Moreover, if $\beta\in(0,\ff 2{2d-2\aa+1})$, then $\tilde{\mu}_{t,\beta}^{B,\nu}$ is a probability measure for every $t\geq t_0$ and every $\nu\in\scr{P}_0$.
\end{lem}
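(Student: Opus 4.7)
The plan is to apply $P_{t^{-\beta}}^0$ to the spectral expansion of $\tilde{\rho}_t^{B,\nu}$ given in \eqref{RAI}, and then estimate the resulting series termwise. By \eqref{EIG0}, $P_{s}^0(\phi_m\phi_0^{-1}) = \e^{-(\lambda_m-\lambda_0)s}\phi_m\phi_0^{-1}$, so taking $s = t^{-\beta}$ produces an extra Gaussian-type cutoff factor $\e^{-(\lambda_m-\lambda_0)t^{-\beta}}$ in each term, which is exactly the regularization we need to make the (otherwise divergent) sum of sup-norms converge.

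First, I would extract the overall prefactor $\e^{-B(\lambda_0)t}/\{t\,\E^\nu[1_{\{t<\sigma_\tau^B\}}]\}$: by \eqref{SDHT-U} together with \eqref{equ-B1}, this is bounded by $C/\{t\,\mu(\phi_0)\nu(\phi_0)\}$ for $t$ larger than some $t_0$, uniformly in $\nu\in\scr{P}_0$. This is where the $1/\nu(\phi_0)$ factor in \eqref{1Le4.1} comes from. Next, I bound each summand using the available estimates: $|\nu(\phi_m)|,|\mu(\phi_m)|\le \|\phi_m\|_\infty\le \alpha_0\sqrt{m}$ from \eqref{EIG}, $\|\phi_m\phi_0^{-1}\|_\infty\le \alpha_4 m^{(d+2)/(2d)}$ from \eqref{EIG0UB}, and $B(\lambda_m)-B(\lambda_0)\ge c\,m^{2\alpha/d}$ for large $m$ (which follows from \eqref{B-lb} combined with $\lambda_m\ge \alpha_0^{-1}m^{2/d}$ in \eqref{EIG}). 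Putting these together, the $m$-th summand is dominated by
\[
C\,m^{(d+1-2\alpha)/d}\,\e^{-c\,m^{2/d}\,t^{-\beta}}.
\]

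The main technical step is then to estimate the series $\sum_{m\ge 1} m^{(d+1-2\alpha)/d}\e^{-c\,m^{2/d}\,t^{-\beta}}$. Comparing with the corresponding integral $\int_0^\infty x^{(d+1-2\alpha)/d}\e^{-c\,x^{2/d}\,t^{-\beta}}\,\d x$ and substituting $u = c\,x^{2/d}\,t^{-\beta}$ reduces this to a gamma integral times $t^{\beta(2d+1-2\alpha)/2}$. Multiplying by the $1/t$ prefactor yields exactly the exponent $(2d-2\alpha+1)\beta/2 - 1$ in \eqref{1Le4.1}. I expect this to be the only nontrivial calculation; the delicate point is getting the right power of $m$ (the combination $\tfrac12 + \tfrac{d+2}{2d} - \tfrac{2\alpha}{d} = \tfrac{d+1-2\alpha}{d}$), since a careless bound on $\|\phi_m\phi_0^{-1}\|_\infty$ or on $B(\lambda_m)-B(\lambda_0)$ would spoil the exponent.

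For the probability-measure assertion, I would verify two things. (i) Mass one: since $\{\phi_m\phi_0^{-1}\}_{m\in\mathbb{N}_0}$ is orthonormal in $L^2(\mu_0)$ and $\tilde{\rho}_t^{B,\nu}$ involves only $m\ge 1$, we have $\mu_0(\phi_m\phi_0^{-1}) = \mu(\phi_m\phi_0) = 0$ for each $m\ge 1$, hence $\mu_0(\tilde{\rho}_{t,\beta}^{B,\nu}) = \mu_0\bigl(P_{t^{-\beta}}^0\tilde{\rho}_t^{B,\nu}\bigr) = \mu_0(\tilde{\rho}_t^{B,\nu}) = 0$. (ii) Nonnegativity: when $\beta\in(0,\tfrac{2}{2d-2\alpha+1})$ the exponent in \eqref{1Le4.1} is strictly negative, so $\|\tilde{\rho}_{t,\beta}^{B,\nu}\|_\infty\to 0$ as $t\to\infty$, and in particular $1+\tilde{\rho}_{t,\beta}^{B,\nu}\ge 0$ for $t$ large enough (enlarging $t_0$ if needed). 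These two facts together give that $\tilde{\mu}_{t,\beta}^{B,\nu}$ is a probability measure for all sufficiently large $t$, completing the proof.
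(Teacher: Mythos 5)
Your proposal is correct and follows essentially the same route as the paper: bound the prefactor $\e^{-B(\lambda_0)t}/\{t\,\E^\nu[1_{\{t<\sigma_\tau^B\}}]\}$ via \eqref{SDHT-U}, estimate each summand using \eqref{EIG}, \eqref{EIG0UB} and \eqref{B-lb} to get $m^{(d+1-2\alpha)/d}\e^{-cm^{2/d}t^{-\beta}}$, compare with the gamma-type integral to obtain the exponent $(2d-2\alpha+1)\beta/2-1$, and then deduce the probability-measure claim from $\mu_0(\phi_m\phi_0^{-1})=0$ together with the sup-norm bound. The exponent bookkeeping and both parts of the probability-measure verification match the paper's argument.
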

\begin{proof}
Combining
\eqref{SDHT-U} with \eqref{EIG}, \eqref{EIG0}, \eqref{EIG0UB}, \eqref{B-lb},  \eqref{equ-B} and \eqref{RAI}, we can find some constants $c_1,c_2,c_3,c_4,c_5,t_0>0$  such that
\begin{equation*}\begin{split}\label{2Le4.1}
\|\tilde{\rho}_{t,\beta}^{B,\nu}\|_\infty&=\ff 1 {t\E^\nu[1_{\{t<\si_\tau^B\}}]}\left\|\sum_{m=1}^\infty\ff{[\mu(\phi_0)\nu(\phi_m)+\nu(\phi_0)\mu(\phi_m)]\e^{-B(\ll_0)t}}
{[B(\ll_m)-B(\ll_0)]\e^{(\ll_m-\ll_0)t^{-\beta}}}\phi_m\phi_0^{-1}\right\|_\infty\\
&\le\ff {c_1}{t\nu(\phi_0)}\sum_{m=1}^\infty\ff{\|\phi_m\|_\infty\|\phi_m\phi_0^{-1}\|_\infty}{B(\ll_m)-B(\ll_0)}\e^{-(\ll_m-\ll_0)t^{-\beta}}\\
&\le\ff{c_2}{t\nu(\phi_0)}\int_0^\infty s^{\ff{d-2\aa+1}d}\e^{-c_3 s^{\ff 2 d}t^{-\beta}}\,\d s\\
&\le\ff {c_4}{t\nu(\phi_0)}t^{\ff{(2d-2\aa+1)\beta}2}\int_0^\infty u^{\ff{d-2\aa+1}d}\e^{-u^{\ff 2 d}}\,\d u\\
&\le\ff {c_5}{\nu(\phi_0)}t^{\ff{(2d-2\aa+1)\beta}2-1},\quad t\ge t_0,
\end{split}\end{equation*}
which proves \eqref{1Le4.1}.

Now let $\beta\in(0,\ff 2{2d-2\aa+1})$. It is clear that \eqref{1Le4.1} implies that there exists a constant $t_1>0$, such that for any $t\ge t_1$, $\|\tilde{\rho}_{t,\beta}^{B,\nu}\|_\infty\le \ff 1 2$. Hence, $1+\tilde{\rho}_{t,\beta}^{B,\nu}\ge\ff 1 2$ and $\mu_0(1+|\tilde{\rho}_{t,\beta}^{B,\nu}|)\le\ff 3 2$ for every $t\ge t_1$. Noting that $\mu_0(\phi_m\phi_0^{-1})=\mu(\phi_m\phi_0)=0$ for every $m\in\mathbb{N}$, we easily see that $\mu_0(1+\tilde{\rho}_{t,\beta}^{B,\nu})=1$ for any $t>0$. Thus, $\tilde{\mu}_{t,\beta}^{B,\nu}$ is a probability measure for every $t\ge t_1$.
\end{proof}
\begin{rem}\label{density-LB}
In the particular case when $B(r)=r$ for every $r\geq0$, the pointwise lower bound of $\tilde{\rho}_{t}^{B,\nu}$ is obtained in \cite[Lemma 3.2]{eW1}. However, it seems that the same method  of proof does not work in the general setting of Lemma \ref{Le4.1}. That is why we introduce the regularization procedure and establish \eqref{1Le4.1} for $\tilde{\rho}_{t,\beta}^{B,\nu}$. Indeed, the pointwise lower bound in \eqref{1Le4.1} is enough for our purpose.
\end{rem}

In the next lemma, we give an upper bound estimate on $\W_2(\tilde{\mu}_{t,\beta}^{B,\nu},\mu_0)^2$.
\begin{lem}\label{Le4.2}
For every $\beta\in(0,\ff 2 {2d-2\aa+1})$, there exist constants $c,t_0>0$ such that
\begin{equation*}\label{1Le4.2}
t^2\W_2(\tilde{\mu}_{t,\beta}^{B,\nu},\mu_0)^2\le\ff{1+ct^{\ff{(2d-2\aa+1)\beta}{2}-1}}{\{\mu(\phi_0)\nu(\phi_0)\}^2}
\sum_{m=1}^\infty\ff{[\mu(\phi_0)\nu(\phi_m)+\nu(\phi_0)\mu(\phi_m)]^2}{(\ll_m-\ll_0)[B(\ll_m)-B(\ll_0)]^2}\e^{-2(\ll_m-\ll_0)t^{-\beta}},
\end{equation*}
for any $t\ge t_0$ and any $\nu\in\scr{P}_0$.
\end{lem}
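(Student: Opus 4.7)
The plan is to apply the logarithmic-mean inequality \eqref{W2UB} with $f_0=1+\tilde{\rho}_{t,\beta}^{B,\nu}$ and $f_1=1$, both of which are probability densities w.r.t.\ $\mu_0$ for $t\ge t_0$ by Lemma \ref{Le4.1}. This reduces the task to estimating the two factors in the integrand separately: the Dirichlet energy $\int_M|\nabla(-\mathcal{L}_0)^{-1}\tilde{\rho}_{t,\beta}^{B,\nu}|^2\,\d\mu_0$ in the numerator, and the pointwise logarithmic mean $\mathscr{M}(1+\tilde{\rho}_{t,\beta}^{B,\nu},1)$ in the denominator, followed by absorbing $\E^\nu[1_{\{t<\si_\tau^B\}}]\e^{B(\ll_0)t}$ into the prefactor $(\mu(\phi_0)\nu(\phi_0))^{-2}$ via \eqref{SDHT-U}.

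For the numerator, since $P_{t^{-\beta}}^0$ multiplies the eigenfunction $\phi_m\phi_0^{-1}$ by $\e^{-(\ll_m-\ll_0)t^{-\beta}}$, the definition of $\tilde{\rho}_t^{B,\nu}$ in \eqref{RAI} yields an explicit spectral expansion of $\tilde{\rho}_{t,\beta}^{B,\nu}$ in the orthonormal basis $\{\phi_m\phi_0^{-1}\}_{m\in\N}$ of $L^2(\mu_0)$. Repeating the integration-by-parts computation used for \eqref{3Th3.4}, namely $\|\nabla(-\mathcal{L}_0)^{-1}(\phi_m\phi_0^{-1})\|_{L^2(\mu_0)}^2=(\ll_m-\ll_0)^{-1}$, gives
\begin{equation*}
\int_M|\nabla(-\mathcal{L}_0)^{-1}\tilde{\rho}_{t,\beta}^{B,\nu}|^2\,\d\mu_0
=\ff{\e^{-2B(\ll_0)t}}{(t\E^\nu[1_{\{t<\si_\tau^B\}}])^2}\sum_{m=1}^\infty\ff{[\mu(\phi_0)\nu(\phi_m)+\nu(\phi_0)\mu(\phi_m)]^2}{(\ll_m-\ll_0)[B(\ll_m)-B(\ll_0)]^2}\e^{-2(\ll_m-\ll_0)t^{-\beta}}.
\end{equation*}

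For the denominator, a first-order expansion of $u\mapsto u/\log(1+u)$ at $u=0$ gives $\mathscr{M}(1+u,1)\ge 1-c_0|u|$, hence $\mathscr{M}(1+u,1)^{-1}\le 1+c_1|u|$, whenever $|u|\le 1/2$. Applying this pointwise together with the sup-norm bound from Lemma \ref{Le4.1}, and using that $\beta<2/(2d-2\aa+1)$ forces $\|\tilde{\rho}_{t,\beta}^{B,\nu}\|_\infty\le c t^{(2d-2\aa+1)\beta/2-1}\to 0$, yields the uniform factor $1+c_2 t^{(2d-2\aa+1)\beta/2-1}$ in front of the Dirichlet energy. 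Finally, \eqref{SDHT-U} shows $(\e^{B(\ll_0)t}\P^\nu(t<\si_\tau^B))^{-2}=(\mu(\phi_0)\nu(\phi_0))^{-2}(1+O(\e^{-[B(\ll_1)-B(\ll_0)]t/2}))$, and this exponentially small error is dwarfed by the polynomial $t^{(2d-2\aa+1)\beta/2-1}$, so it is absorbed into the constant $c$ after multiplying through by $t^2$.

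The only delicate point is the denominator estimate: because $\tilde{\rho}_{t,\beta}^{B,\nu}$ can take (small) negative values, one needs $\|\tilde{\rho}_{t,\beta}^{B,\nu}\|_\infty$ to be strictly less than $1$ to get a non-degenerate lower bound on $\mathscr{M}$, and \emph{quantitatively small} in order to keep the correction $c_2 t^{(2d-2\aa+1)\beta/2-1}$ vanishing as $t\to\infty$. This is precisely the role of the regularization by $P_{t^{-\beta}}^0$: in the non-local setting the bare density $\tilde{\rho}_t^{B,\nu}$ of \cite{eW1} is not known to admit such a pointwise bound (cf.\ Remark \ref{density-LB}), whereas after smoothing the eigenfunction tail estimates \eqref{EIG}, \eqref{EIG0UB} and \eqref{B-lb} combine to give the sharp $t^{(2d-2\aa+1)\beta/2-1}$ rate of Lemma \ref{Le4.1}, which is exactly what is needed here.
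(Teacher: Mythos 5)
Your proposal is correct and follows essentially the same route as the paper: apply \eqref{W2UB} with $f_0=1+\tilde{\rho}_{t,\beta}^{B,\nu}$, compute the Dirichlet energy spectrally exactly as in \eqref{3Th3.4}, control the logarithmic mean via the sup-norm bound of Lemma \ref{Le4.1}, and absorb the normalization through \eqref{SDHT-U}. The only cosmetic difference is that you lower-bound $\mathscr{M}(1+u,1)$ by a first-order Taylor expansion, whereas the paper uses the elementary inequality $\mathscr{M}(1+u,1)\ge 1\wedge(1+u)$; both give the same $1+ct^{(2d-2\alpha+1)\beta/2-1}$ factor.
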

\begin{proof}
By Lemma \ref{Le4.1}, for every $\beta\in(0,\ff 2 {2d-2\aa+1})$, there exist constants $c,t_0>0$ such that $\tilde{\mu}_{t,\beta}^{B,\nu}\in\scr{P}_0$ for every $t\ge t_0$, and
$$\scr{M}(1+\tilde{\rho}_{t,\beta}^{B,\nu})\ge 1\wedge(1+\tilde{\rho}_{t,\beta}^{B,\nu})\ge \ff 1 {1+ct^{\ff{(2d-2\aa+1)\beta}{2}-1}},\quad t\ge t_0.$$
So, \eqref{W2UB} implies that
\begin{equation}\begin{split}\label{2L4.2}
\W_2(\tilde{\mu}_{t,\beta}^{B,\nu},\mu_0)^2
&\le\int_M\ff{|\nn(-\L_0)^{-1}\tilde{\rho}_{t,\beta}^{B,\nu}|^2}{\scr{M}(1+\tilde{\rho}_{t,\beta}^{B,\nu},1)}\,\d\mu_0\\
&\le
\left(1+ct^{\ff{(2d-2\aa+1)\beta}{2}-1}\right)\mu_0\big(|\nn(-\L_0)^{-1}\tilde{\rho}_{t,\beta}^{B,\nu}|^2\big),\quad t\ge t_0.
\end{split}\end{equation}
Next, \eqref{EIG0}, \eqref{RAI} and the integration-by-parts formula yield that
\begin{equation*}\begin{split}
&t^2\mu_0(|\nn(-\L_0)^{-1}\tilde{\rho}_{t,\beta}^{B,\nu}|^2)\\
&=\ff 1 {(\E^\nu[1_{\{t<\si_\tau^B\}}])^2}\sum_{m=1}^\infty\ff{[\mu(\phi_0)\nu(\phi_m)+\nu(\phi_0)\mu(\phi_m)]^2\e^{-2B(\ll_0)t}}{(\ll_m-\ll_0)[B(\ll_m)-B(\ll_0)]^2}
\e^{-2(\ll_m-\ll_0)t^{-\beta}}.
\end{split}\end{equation*}
Combining this with \eqref{SDHT-U} and \eqref{2L4.2}, we finish the proof.
\end{proof}

In order to use the total variation norm $\|\mu_{t,\beta}^{B,\nu}-\tilde{\mu}_{t,\beta}^{B,\nu}\|_{\rm var}$ to bound $\W_2(\tilde{\mu}_{t,\beta}^{B,\nu},\mu_{t,\beta}^{B,\nu})$, we need the following lemma.
\begin{lem}\label{L3.1}
For every $p\in( 3/2,\infty]$, there exist constants $c,t_0>0$ such that, for every $\nu\in\scr{P}_0$ with $\nu=h\mu$ and every $t\ge t_0$,
\begin{equation*}\label{1L3.1}
\mu_0(|\rho_t^{B,\nu}-\tilde{\rho}_t^{B,\nu}|)\le c\|h\phi_0^{-1}\|_{L^p(\mu_0)}\e^{-[B(\ll_1)-B(\ll_0)]t}.
\end{equation*}
\end{lem}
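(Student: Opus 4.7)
The plan starts from the decomposition in Lemma \ref{density-h},
\[
\rho_t^{B,\nu}-\tilde\rho_t^{B,\nu} = -A_t + \frac{1}{t\,\E^\nu[1_{\{t<\si_\tau^B\}}]}\int_0^t \xi_s\,\d s,
\]
so it suffices to bound $\mu_0(|A_t|)$ and $\mu_0(|\int_0^t\xi_s\,\d s|)$ and divide by $t\,\E^\nu[1_{\{t<\si_\tau^B\}}]\ge \tfrac12 t\mu(\phi_0)\nu(\phi_0)\e^{-B(\ll_0)t}$ (valid for $t$ large, from \eqref{SDHT-U}). The main analytic input is H\"older on $(M,\mu_0)$ with conjugate exponents $(p,p')$, $p'=p/(p-1)$; the hypothesis $p>3/2$ is precisely what forces $p'<3$, so that \eqref{PHI} yields $\|\phi_0^{-1}\|_{L^{p'}(\mu_0)}<\infty$.

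For the $\xi_s$ piece I would use the integral representation \eqref{int-rep-xi}. Noting $\eta_l^\nu=P_l^0(h\phi_0^{-1})$ has $\mu_0$-mean $\nu(\phi_0)$ and $\mu_0(\phi_0^{-1})=\mu(\phi_0)$, H\"older gives
\[
\mu_0(|[\eta_l^\nu-\nu(\phi_0)][P_k^0\phi_0^{-1}-\mu(\phi_0)]|)\le \|P_l^0(h\phi_0^{-1})-\mu_0(h\phi_0^{-1})\|_{L^p(\mu_0)}\|P_k^0\phi_0^{-1}-\mu_0(\phi_0^{-1})\|_{L^{p'}(\mu_0)},
\]
and two applications of \eqref{PI0} turn the right-hand side into $C\|h\phi_0^{-1}\|_{L^p(\mu_0)}\e^{-(\ll_1-\ll_0)(k+l)}$. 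Integrating against $\e^{-\ll_0(k+l)}\P(S_{t-s}^B\in\d k)\P(S_s^B\in\d l)$ produces, via \eqref{LT}, the factor $\e^{-B(\ll_1)t}$. The second part of \eqref{int-rep-xi} is handled identically after rewriting $\nu(\phi_0\{P_l^0\phi_0^{-1}-\mu(\phi_0)\})=\mu_0((h\phi_0^{-1})[P_l^0\phi_0^{-1}-\mu(\phi_0)])$ (since $\d\mu=\phi_0^{-2}\d\mu_0$). This yields $\mu_0(|\xi_s|)\le C\|h\phi_0^{-1}\|_{L^p(\mu_0)}\e^{-B(\ll_1)t}$ uniformly in $s\in(0,t]$.

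For $A_t$ I would replace each factor $[B(\ll_m)-B(\ll_0)]^{-1}$ by $\int_0^\infty \e^{-[B(\ll_m)-B(\ll_0)]r}\,\d r$, giving $t\,\E^\nu[1_{\{t<\si_\tau^B\}}]A_t=\int_0^\infty \e^{B(\ll_0)r}g_{t+r}\,\d r$, where the spectral sum
\[
g_u:=\sum_{m\ge 1}[\mu(\phi_0)\nu(\phi_m)+\nu(\phi_0)\mu(\phi_m)]\e^{-B(\ll_m)u}\phi_m\phi_0^{-1}
\]
equals, by comparison with \eqref{SDSG}--\eqref{SDSG0},
\[
\int_0^\infty \e^{-\ll_0 s}\bigl\{\mu(\phi_0)[P_s^0(h\phi_0^{-1})-\mu_0(h\phi_0^{-1})]+\nu(\phi_0)[P_s^0\phi_0^{-1}-\mu_0(\phi_0^{-1})]\bigr\}\,\P(S_u^B\in\d s).
\]
Applying \eqref{PI0} at the $L^1(\mu_0)$ level and using $\nu(\phi_0)=\mu_0(h\phi_0^{-1})\le\|h\phi_0^{-1}\|_{L^p(\mu_0)}$ (Jensen) produces $\|g_u\|_{L^1(\mu_0)}\le C\|h\phi_0^{-1}\|_{L^p(\mu_0)}\e^{-B(\ll_1)u}$; integrating in $r$ yields $\|t\,\E^\nu[1_{\{t<\si_\tau^B\}}]A_t\|_{L^1(\mu_0)}\le C\|h\phi_0^{-1}\|_{L^p(\mu_0)}\e^{-B(\ll_1)t}$.

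Assembling both estimates and dividing by the lower bound from \eqref{SDHT-U} delivers the claimed rate $c\|h\phi_0^{-1}\|_{L^p(\mu_0)}\e^{-[B(\ll_1)-B(\ll_0)]t}$. The only real subtlety is in balancing the H\"older exponents for the product part of $\xi_s$: everything closes because $p>3/2\Leftrightarrow p'<3$, which by \eqref{PHI} is exactly the range keeping $\|\phi_0^{-1}\|_{L^{p'}(\mu_0)}$ finite. Without this constraint the H\"older pair for $\xi_s$ would fail, and $\phi_0^{-1}$ on the boundary would prevent any $L^1(\mu_0)$ control.
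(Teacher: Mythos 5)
Your proposal is correct, and for the dominant term it coincides with the paper's argument: the decomposition $\rho_t^{B,\nu}-\tilde\rho_t^{B,\nu}=-A_t+\frac{1}{t\E^\nu[1_{\{t<\si_\tau^B\}}]}\int_0^t\xi_s\,\d s$, the use of \eqref{int-rep-xi}, H\"older with exponents $(p,p')$, \eqref{PI0}, \eqref{PHI} (this is exactly where $p>3/2$ enters, as you say), and \eqref{LT} to produce $\e^{-B(\ll_1)t}$ are precisely the paper's steps for the $\xi_s$ piece. Where you genuinely diverge is in the treatment of $A_t$: the paper bounds $\mu_0(|A_t|)$ by brute force on the spectral series, using $\|\phi_m\|_\infty\le\alpha_0\sqrt m$, $\mu_0(|\phi_m\phi_0^{-1}|)\le\|\phi_m\phi_0^{-1}\|_{L^2(\mu_0)}=1$, the Weyl-type bounds \eqref{EIG}, and \eqref{B-lb}--\eqref{equ-B} to sum $\sum_m\|\phi_m\|_\infty\e^{-[B(\ll_m)-B(\ll_0)]t}/[B(\ll_m)-B(\ll_0)]$. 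You instead resolve $[B(\ll_m)-B(\ll_0)]^{-1}=\int_0^\infty\e^{-[B(\ll_m)-B(\ll_0)]r}\,\d r$, identify the resulting spectral sum $g_u$ with a subordinated action of $P_s^0-\mu_0$ on $\hat h=\mu(\phi_0)h\phi_0^{-1}+\nu(\phi_0)\phi_0^{-1}$, and apply \eqref{PI0} at the $L^1(\mu_0)$ level. This buys you two things: you avoid the eigenfunction sup-norm and eigenvalue-growth estimates entirely, and your bound comes out with the factor $\|h\phi_0^{-1}\|_{L^p(\mu_0)}$ built in (via $\nu(\phi_0)\le\|h\phi_0^{-1}\|_{L^p(\mu_0)}$), whereas the paper's $A_t$ bound is stated without it. The one point worth making explicit in a write-up is that $\int_0^\infty\e^{B(\ll_0)r}\e^{-B(\ll_1)(t+r)}\,\d r$ converges because $B(\ll_1)>B(\ll_0)$, which holds since $B\in\mathbf{B}^\alpha$ forces $B$ to be strictly increasing; with that noted, the argument closes.
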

\begin{proof}
Let $p\in( 3/2,\infty]$ and $p'$ be its conjugate number. Since  $\eta_s^\nu=P_s^0(h\phi_0^{-1})$ for any $s>0$ and any $\nu=h\mu\in\scr{P}_0$, by H\"{o}lder's inequality and \eqref{PI0}, there exist constants $c_1,c_2>0$ such that
\begin{align*}
&\left\|\big(\eta_l^\nu-\nu(\phi_0)\big) P_k^0[\phi_0^{-1}-\mu(\phi_0)]\right\|_{L^1(\mu_0)}\\
&\le\|\eta_l^\nu-\nu(\phi_0)\|_{L^p(\mu_0)}\|P_k^0\phi_0^{-1}-\mu(\phi_0)\|_{L^{p'}(\mu_0)}\\
&=\|P_l^0(h\phi_0^{-1})-\mu_0(h\phi_0^{-1})\|_{L^p(\mu_0)}\|P_k^0\phi_0^{-1}-\mu_0(\phi_0^{-1})\|_{L^{p'}(\mu_0)}\\
&\le c_1\|h\phi_0^{-1}\|_{L^p(\mu_0)}\|\phi_0^{-1}\|_{L^{p'}(\mu_0)}\e^{-(\ll_1-\ll_0)(k+l)}\\
&\le c_2\|h\phi_0^{-1}\|_{L^p(\mu_0)}\e^{-(\ll_1-\ll_0)(k+l)},\quad k,l>0,
\end{align*}
where we used \eqref{PHI} in the last inequality. Similarly, we find a constant $c_3>0$ such that
\begin{equation}\begin{split}\label{NU}
&|\nu(\phi_0\{\mu(\phi_0)-P_l^0\phi_0^{-1}\})|\\
&\le \|h\phi_0^{-1}\|_{L^p(\mu_0)}\|P_l^0\phi_0^{-1}-\mu(\phi_0)\|_{L^{p'}(\mu_0)}\\
&\le c_3 \|h\phi_0^{-1}\|_{L^p(\mu_0)}\e^{-(\ll_1-\ll_0)l},\quad l>0.
\end{split}\end{equation}
Together with \eqref{SDHT-U}, we find some constants $c_4,t_0>0$ such that
\begin{equation}\begin{split}\label{3L3.1}
&\mu_0\left(\left|\ff 1 {t\E^{\nu}[1_{\{t<\si_\tau^B\}}]}\int_0^t \xi_s\,\d s\right|\right)\le\ff1 {t\E^{\nu}[1_{\{t<\si_\tau^B\}}]}\int_0^t\|\xi_s\|_{L^1(\mu_0)}\,\d s\\
&\le \ff {c_4 \e^{B(\ll_0) t}} t\int_0^t\int_0^\infty\int_0^\infty\e^{-\ll_0(k+l)}\left\|[\eta_l^\nu-\nu(\phi_0)]P_k^0[\phi_0^{-1}-\mu(\phi_0)]\right\|_{L^1(\mu_0)}\\
&\quad\P(S_{t-s}^B\in\d k)\P(S_s^B\in\d l)\d s\\
&\quad+\ff {c_4 \e^{B(\ll_0) t}} t\int_0^t\int_0^\infty \e^{-\ll_0 l}\|\nu(\phi_0\{P_l^0[\phi_0^{-1}-\mu(\phi_0)]\})\|_{L^1(\mu_0)}\,\P(S_t^B\in\d l)\d s\\
&\le \ff {c_4 \e^{B(\ll_0) t}} t \|h\phi_0^{-1}\|_{L^p(\mu_0)}
\int_0^t\int_0^\infty\int_0^\infty\e^{-\ll_1(k+l)}\,\P(S_{t-s}^B\in\d k)\P(S_s^B\in\d l)\d s\\
&\quad +\ff {c_4 \e^{B(\ll_0) t}} t \|h\phi_0^{-1}\|_{L^p(\mu_0)}
\int_0^t\int_0^\infty \e^{-\ll_1 l}\,\P(S_t^B\in\d l)\d s\\
&=2c_4\|h\phi_0^{-1}\|_{L^p(\mu_0)}
\e^{-[B(\ll_1)-B(\ll_0)]t},\quad t\geq t_0,
\end{split}\end{equation}
where we used \eqref{int-rep-xi} in the first inequality and \eqref{LT} in the last equality. By \eqref{RAI}, \eqref{SDHT-U}, \eqref{equ-B}, \eqref{B-lb}, \eqref{EIG} and the fact that $\|\phi_m\phi_0^{-1}\|_{L^2(\mu_0)}=1$ for every $m\in\mathbb{N}$, we find some constants $c_5,c_6, t_0>0$
such that
\begin{equation*}\begin{split}\label{4L3.1}
\mu_0(|A_t|)&\le \ff{c_5} t\sum_{m=1}^\infty \ff{\|\phi_m\|_\infty\e^{-[B(\ll_m)-B(\ll_0)]t}}{B(\ll_m)-B(\ll_0)}\mu_0(|\phi_m\phi_0^{-1}|)\\
&\le c_6\e^{-[B(\ll_1)-B(\ll_0)]t},\quad t\ge t_0.
\end{split}\end{equation*}
Combining this with \eqref{3L3.1} and
$$\mu_0(|\rho_t^{B,\nu}-\tilde{\rho}_t^{B,\nu}|)\leq \mu_0\left(\left|\ff 1 {t\E^{\nu}[1_{\{t<\si_\tau^B\}}]}\int_0^t \xi_s\,\d s\right|\right)+\mu_0(|A_t|),$$
we complete the proof of the desired result.
\end{proof}

With Lemma \ref{L3.1} at our disposal, we may obtain upper bound estimates on $\W_2(\tilde{\mu}_{t,\beta}^{B,\nu},\mu_{t,\beta}^{B,\nu})$.
\begin{lem}\label{Lemma4.4}
For every $p\in(3/2,\infty]$ and every $\beta\in(0,\ff 2{2d-2\aa+1})$, there exist constants $t_0,c>0$ such that
$$\W_2(\mu_{t,\beta}^{B,\nu},\tilde{\mu}_{t,\beta}^{B,\nu})^2\le c\|h\phi_0^{-1}\|_{L^p(\mu_0)}\e^{-[B(\ll_1)-B(\ll_0)]t},\quad  t\ge t_0,\,\nu=h\mu\in\scr{P}_0.$$
\end{lem}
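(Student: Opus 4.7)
The plan is to combine a standard diameter bound on $\W_2$ (the same device used at the end of the proof of Theorem \ref{T1.1}(1)) with the $L^1(\mu_0)$-contractivity of $P_s^0$ and Lemma \ref{L3.1}.

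First I would verify that, for $\beta\in(0,2/(2d-2\alpha+1))$ and $t$ sufficiently large, both $\mu_{t,\beta}^{B,\nu}$ and $\tilde\mu_{t,\beta}^{B,\nu}$ are genuine probability measures. For $\tilde\mu_{t,\beta}^{B,\nu}$ this is exactly Lemma \ref{Le4.1}. For $\mu_{t,\beta}^{B,\nu}$, use conservativity $P^0_s 1 = 1$ to write $1+\rho_{t,\beta}^{B,\nu}=P^0_{t^{-\beta}}(1+\rho_t^{B,\nu})$; positivity-preservation of $P^0_s$ and invariance of $\mu_0$ then give $1+\rho_{t,\beta}^{B,\nu}\ge 0$ and $\mu_0(1+\rho_{t,\beta}^{B,\nu})=\mu_t^{B,\nu}(M)=1$.

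Next, since $M$ is compact with diameter $D:=\sup_{x,y\in M}\rho(x,y)<\infty$, the elementary coupling bound yields
$$\W_2(\mu_{t,\beta}^{B,\nu},\tilde\mu_{t,\beta}^{B,\nu})^2 \le D^2\inf_{\pi\in\scr C(\mu_{t,\beta}^{B,\nu},\tilde\mu_{t,\beta}^{B,\nu})}\pi(\{x\ne y\}) = \tfrac{D^2}{2}\|\mu_{t,\beta}^{B,\nu}-\tilde\mu_{t,\beta}^{B,\nu}\|_{\mathrm{var}},$$
exactly as in the last display of the proof of Theorem \ref{T1.1}(1). Since both measures have densities w.r.t. $\mu_0$, the total variation norm equals the $L^1(\mu_0)$-norm of the density difference, so
$$\|\mu_{t,\beta}^{B,\nu}-\tilde\mu_{t,\beta}^{B,\nu}\|_{\mathrm{var}} = \|P^0_{t^{-\beta}}(\rho_t^{B,\nu}-\tilde\rho_t^{B,\nu})\|_{L^1(\mu_0)}.$$
The semigroup $P^0_s$ is symmetric on $L^2(\mu_0)$, positivity-preserving and conservative, hence a contraction on every $L^p(\mu_0)$, $p\in[1,\infty]$; in particular on $L^1(\mu_0)$, so the right-hand side is at most $\|\rho_t^{B,\nu}-\tilde\rho_t^{B,\nu}\|_{L^1(\mu_0)}$.

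Finally, for $p\in(3/2,\infty]$, Lemma \ref{L3.1} provides constants $c_0,t_0>0$ with $\|\rho_t^{B,\nu}-\tilde\rho_t^{B,\nu}\|_{L^1(\mu_0)}\le c_0\|h\phi_0^{-1}\|_{L^p(\mu_0)}\e^{-[B(\lambda_1)-B(\lambda_0)]t}$ for $t\ge t_0$. Chaining the three inequalities and absorbing $D^2/2$ into the constant gives the claim with $c=c_0 D^2/2$. There is no real obstacle: the single point requiring mild care is the probability-measure status of $\mu_{t,\beta}^{B,\nu}$, which as noted reduces to conservativity, positivity-preservation, and $\mu_0$-invariance of $P^0_s$.
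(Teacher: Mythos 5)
Your proposal is correct and follows essentially the same route as the paper: the diameter bound $\W_2^2\le \tfrac12 D^2\|\cdot\|_{\rm var}$, identification of the total variation norm with the $L^1(\mu_0)$-norm of the density difference, the $L^1(\mu_0)$-contractivity of $P^0_{t^{-\beta}}$, and then Lemma \ref{L3.1}. The only difference is that you spell out the probability-measure status of $\mu_{t,\beta}^{B,\nu}$ and the contraction step, which the paper leaves implicit (the former is noted at the start of Section 4.1).
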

\begin{proof}
We use $D$ to denote the diameter of $M$, i.e., $D=\sup{\{\rho(x,y):x,y\in M\}}$, which is obviously finite since $M$ is compact. According to Lemmas \ref{Le4.1} and \ref{L3.1}, for every $p\in(3/2,\infty]$ and every $\beta\in(0,\ff 2{2d-2\aa+1})$, there exist constants $t_0,c>0$ such that,
$\tilde{\mu}_{t,\beta}^{B,\nu}$ is a probability measure for any $t\ge t_0$ and $\nu\in\scr{P}_0$, and
\begin{equation*}\begin{split}
\W_2(\mu_{t,\beta}^{B,\nu},\tilde{\mu}_{t,\beta}^{B,\nu})^2
&\le \ff 1 2 D^2\|\mu_{t,\beta}^{B,\nu}-\tilde{\mu}_{t,\beta}^{B,\nu}\|_{\textup{var}}=\ff 1 2D^2\mu_0(|\rho_{t,\beta}^{B,\nu}-\tilde{\rho}_{t,\beta}^{B,\nu}|)\\
&\le c \|h\phi_0^{-1}\|_{L^p(\mu_0)}\e^{-[B(\ll_1)-B(\ll_0)]t},\quad t\ge t_0,\,\nu=h\mu\in\scr{P}_0,
\end{split}\end{equation*}
where in the first equality we used the fact that
$$\|\mu_{t,\beta}^{B,\nu}-\tilde{\mu}_{t,\beta}^{B,\nu}\|_{\textup{var}}=\Big\|\ff{\d \mu_{t,\beta}^{B,\nu}}{\d \mu_0}-\ff{\d \tilde{\mu}_{t,\beta}^{B,\nu}}{\d \mu_0}\Big\|_{L^1(\mu_0)}=\|\rho_{t,\beta}^{B,\nu}-\tilde{\rho}_{t,\beta}^{B,\nu}\|_{L^1(\mu_0)}.$$
\end{proof}

Next, we estimate the error term $\W_2(\mu_t^{B,\nu},\mu_{t,\beta}^{B,\nu})$. We need the following inequality borrowed from \cite[Theorem A.1]{eW3} (see also \cite[Corollary 4.4]{AG2019}), i.e.,
for any probability density functions $f_1$ and $f_2$ with respect to $\mu_0$ such that $f_1\vee f_2>0$,
\begin{equation}\label{wang-inequ}
\W_2(f_1\mu_0,f_2\mu_0)^2\le 4\int_M\ff{|\nabla(-\L_0)^{-1}(f_2-f_1)|^2}{f_1}\,\d\mu_0.
\end{equation}
Recall the number $p_0$ introduced in Theorem \ref{T1.2}, i.e.,
$$p_0=\ff{6(d+2)}{d+2+12\aa}\vee \ff 3 2.$$
\begin{lem}\label{Le4.6}
Let $\beta\in(0,\ff 2{2d-2\aa+1})$ and $p\in(p_0,\infty]$. Then there exist constants $c,t_0>0$ such that, for every $t\ge t_0$ and every $\nu=h\mu\in\scr{P}_0$ with $h\phi_0^{-1}\in L^p(\mu_0)$,
$$t^2\W_2(\mu_t^{B,\nu},\mu_{t,\beta}^{B,\nu})^2\le c \|h\phi_0^{-1}\|_{L^p(\mu_0)}^2 t^{-\beta}.$$
\end{lem}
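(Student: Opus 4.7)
The plan is to apply the inequality \eqref{wang-inequ} with $f_1=1+\rho_{t,\beta}^{B,\nu}$ and $f_2=1+\rho_t^{B,\nu}$. First I verify that $1+\rho_{t,\beta}^{B,\nu}\ge 1/2$ for $t$ large. By Lemma \ref{Le4.1}, $\|\tilde\rho_{t,\beta}^{B,\nu}\|_\infty\to 0$ under $\beta<\ff{2}{2d-2\aa+1}$. The remainder $\rho_{t,\beta}^{B,\nu}-\tilde\rho_{t,\beta}^{B,\nu}=P_{t^{-\beta}}^0\bigl[(t\E^\nu[1_{\{t<\si_\tau^B\}}])^{-1}\int_0^t\xi_s\,\d s-A_t\bigr]$ has $L^1(\mu_0)$ norm controlled by the bound from (the proof of) Lemma \ref{L3.1}, and applying \eqref{IU0} at time $t^{-\beta}$ turns this into an $L^\infty$ bound with at most a polynomial factor $t^{\beta(d+2)/2}$, still super-exponentially small. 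Hence the denominator in \eqref{wang-inequ} is $\ge 1/2$, and an integration by parts gives
$$
\W_2(\mu_t^{B,\nu},\mu_{t,\beta}^{B,\nu})^2\leq 8\,\|(-\L_0)^{-1/2}(I-P_{t^{-\beta}}^0)\rho_t^{B,\nu}\|_{L^2(\mu_0)}^2.
$$

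I then decompose $\rho_t^{B,\nu}=\tilde\rho_t^{B,\nu}-A_t+(t\E^\nu[1_{\{t<\si_\tau^B\}}])^{-1}\int_0^t\xi_s\,\d s$. The $A_t$ and $\xi_s$ contributions to the $L^2(\mu_0)$ norm above are super-exponentially small in $t$: for $A_t$ this is immediate from its spectral expansion \eqref{RAI} and \eqref{B-lb}, while for $\int_0^t\xi_s\,\d s$ one uses the integral representation \eqref{int-rep-xi} together with the H\"older-based $L^p$--$L^{p'}$ splitting behind Lemma \ref{L3.1}; both are dominated by $t^{-\beta-2}\|h\phi_0^{-1}\|_{L^p(\mu_0)}^2$ for $t$ large. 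For the dominant term $\tilde\rho_t^{B,\nu}=\sum_{m\ge 1}\tilde c_m\phi_m\phi_0^{-1}$, the elementary bound $(1-\e^{-x})^2\le x$ for $x\ge 0$ combined with spectral decomposition yields
$$
\|(-\L_0)^{-1/2}(I-P_{t^{-\beta}}^0)\tilde\rho_t^{B,\nu}\|_{L^2(\mu_0)}^2=\sum_{m\ge 1}\ff{\tilde c_m^2\,(1-\e^{-(\ll_m-\ll_0)t^{-\beta}})^2}{\ll_m-\ll_0}\leq t^{-\beta}\sum_{m\ge 1}\tilde c_m^2.
$$

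It remains to prove $\sum_{m\ge 1}\tilde c_m^2\leq C\|h\phi_0^{-1}\|_{L^p(\mu_0)}^2/t^2$. Using \eqref{RAI}, \eqref{SDHT-U} and the identity
$$
\mu(\phi_0)\nu(\phi_m)+\nu(\phi_0)\mu(\phi_m)=\mu_0\bigl(\phi_0^{-1}[\mu(\phi_0)h+\nu(\phi_0)]\,\phi_m\phi_0^{-1}\bigr),
$$
the sum is controlled by $(C/t^2)\|(B(-\L_0+\ll_0)-B(\ll_0))^{-1}\,\phi_0^{-1}[\mu(\phi_0)h+\nu(\phi_0)]\|_{L^2(\mu_0)}^2$. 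By $B(r)-B(\ll_0)\gtrsim r^\aa$ from \eqref{B-lb} together with the smoothing estimate \eqref{PQ0}, the operator above is bounded from $L^q(\mu_0)$ to $L^2(\mu_0)$ for $q>\ff{2(d+2)}{d+2+4\aa}$. To pass from the $L^q$ bound on $\phi_0^{-1}[\mu(\phi_0)h+\nu(\phi_0)]$ to $\|h\phi_0^{-1}\|_{L^p(\mu_0)}$ I use the H\"older factorization $\|h\phi_0^{-2}\|_{L^q}\le\|h\phi_0^{-1}\|_{L^p}\|\phi_0^{-1}\|_{L^r}$ with $1/q=1/p+1/r$ and $r<3$ (from \eqref{PHI}); the requirement $q>\ff{2(d+2)}{d+2+4\aa}$ then translates into exactly $p>\ff{6(d+2)}{d+2+12\aa}$, which combined with the threshold $p>3/2$ inherited from Lemma \ref{L3.1} gives $p>p_0$. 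The main obstacle is the careful bookkeeping of the $h$-part and the $\phi_0^{-1}$-part to produce a single clean constant: the bound $\|h\phi_0^{-1}\|_{L^p(\mu_0)}\ge\nu(\phi_0)$ (by Jensen) is used to absorb the $\nu(\phi_0)$-weighted pure $\phi_0^{-1}$-contribution into the $\|h\phi_0^{-1}\|_{L^p(\mu_0)}^2$ factor.
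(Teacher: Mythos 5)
Your overall architecture matches the paper's: apply \eqref{wang-inequ} after checking $1+\rho_{t,\beta}^{B,\nu}\ge\ff12$, reduce to $\|(-\L_0)^{-1/2}(I-P_{t^{-\beta}}^0)\rho_t^{B,\nu}\|_{L^2(\mu_0)}$, split $\rho_t^{B,\nu}$ into $\tilde\rho_t^{B,\nu}$, $A_t$ and the $\xi$-part, and for the dominant term reduce to $(-\L_0)^{-\aa}$ acting on $\hat h=\mu(\phi_0)h\phi_0^{-1}+\nu(\phi_0)\phi_0^{-1}$ via \eqref{PQ0}. Your handling of the main term is in fact cleaner than the paper's: the spectral inequality $(1-\e^{-x})^2\le x$ replaces the paper's change-of-variables identity $\int_\epsilon^\infty(\ff1{\sqrt{r-\epsilon}}-\ff1{\sqrt r})\,\d r=2\sqrt\epsilon/\sqrt\pi$ and yields the factor $t^{-\beta}$ directly. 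Two points, however, do not hold up.

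First, the $\xi$-term is a genuine gap, and it is precisely where the threshold $p_0$ actually originates in the paper. Lemma \ref{L3.1} and its $L^p$--$L^{p'}$ H\"older splitting only give $\big\|\ff1{t\E^\nu[1_{\{t<\si_\tau^B\}}]}\int_0^t\xi_s\,\d s\big\|_{L^1(\mu_0)}\lesssim\|h\phi_0^{-1}\|_{L^p(\mu_0)}\e^{-[B(\ll_1)-B(\ll_0)]t}$. You cannot upgrade this $L^1$ bound to the required $\dot H^{-1}$-type bound by smoothing: writing $(-\L_0)^{-1/2}=\ff2{\sqrt\pi}\int_0^\infty P_{r^2}^0\,\d r$ and using $\|P_{r^2}^0-\mu_0\|_{L^1(\mu_0)\to L^2(\mu_0)}\lesssim(1\wedge r^2)^{-(d+2)/4}$ produces the integral $\int_0^1 r^{-(d+2)/2}\,\d r$, which diverges for every $d\ge1$. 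One must instead estimate $\|\xi_s\|_{L^2(\mu_0)}$ directly, i.e.\ split the product $[\eta_l^\nu-\nu(\phi_0)][P_k^0\phi_0^{-1}-\mu(\phi_0)]$ in $L^q\times L^{2q/(q-2)}$; finiteness of $\|\phi_0^{-1}\|_{L^{2q/(q-2)}(\mu_0)}$ via \eqref{PHI} forces $q>6$, and integrating the resulting time singularity $s^{-(d+2)(q-p)/(2\aa pq)}$ over $(0,t)$ is what requires $p>\ff{6(d+2)}{d+2+12\aa}$. This is the content of \eqref{3Le4.6}; your sketch omits it.

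Second, in the main term your H\"older factorization $\|h\phi_0^{-2}\|_{L^q}\le\|h\phi_0^{-1}\|_{L^p}\|\phi_0^{-1}\|_{L^r}$ addresses the wrong function: the function entering the spectral identity is $\hat h=\mu(\phi_0)h\phi_0^{-1}+\nu(\phi_0)\phi_0^{-1}$, not $h\phi_0^{-2}$, and no factorization is needed since $\|\hat h\|_{L^q(\mu_0)}\le\|h\phi_0^{-1}\|_{L^q(\mu_0)}+\nu(\phi_0)\|\phi_0^{-1}\|_{L^q(\mu_0)}\le 2\|h\phi_0^{-1}\|_{L^p(\mu_0)}$ for any $q\le p\wedge 2$ (using $\nu(\phi_0)=\|h\phi_0^{-1}\|_{L^1(\mu_0)}$ and $\|\phi_0^{-1}\|_{L^2(\mu_0)}=1$). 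Consequently the main term only needs $q>\ff{2(d+2)}{d+2+4\aa}$, hence only $p>\ff{2(d+2)}{d+2+4\aa}$, which is strictly weaker than $p_0$; your derivation lands on the correct threshold $\ff{6(d+2)}{d+2+12\aa}$ only by an accident of arithmetic, while the real source of that constraint is the $\xi$-term discussed above.
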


\begin{proof} We divide the proof into three parts.

(1) Since $\mu_0(\xi_s)=0$ for any $s>0$ (see \eqref{6Th3.4} above) and $p_0\ge 3/2$, by \eqref{IU0} and \eqref{3L3.1}, we find some constants $c_1,c_2,t_0>0$ such that
\begin{equation}\begin{split}\label{1Le4.6}
&\left\|\ff 1 {t\E^\nu[1_{\{t<\si_\tau^B\}}]}\int_0^t P_{t^{-\beta}}^0\xi_s\,\d s\right\|_\infty\le\ff 1 {t\E^\nu[1_{\{t<\si_\tau^B\}}]}\int_0^t\|P_{t^{-\beta}}^0\xi_s\|_\infty\,\d s\\
&\le \ff 1 {t\E^\nu[1_{\{t<\si_\tau^B\}}]}\int_0^t \|P_{t^{-\beta}}^0-\mu_0\|_{L^1(\mu_0)\to L^\infty(\mu_0)}\|\xi_s\|_{L^1(\mu_0)}\,\d s\\
&\le c_1 \e^{-(\ll_1-\ll_0)t^{-\beta}}\{1\wedge t^{-\beta}\}^{-\ff{d+2}2}\ff 1 {t\E^\nu[1_{\{t<\si_\tau^B\}}]}\int_0^t\|\xi_s\|_{L^1(\mu_0)}\,\d s\\
&\le c_2 \|h\phi_0^{-1}\|_{L^p(\mu_0)} t^{\ff{(d+2)\beta}2}\e^{-[B(\ll_1)-B(\ll_0)]t},\quad t\geq t_0.
\end{split}\end{equation}
According to \eqref{EIG0} and the definition of $A_t$ in \eqref{RAI}, we have
$$P_{t^{-\beta}}^0A_t=\ff 1 {t\E^\nu[1_{\{t<\si_\tau^B\}}]}\sum_{m=1}^\infty\ff{[\mu(\phi_0)\nu(\phi_m)+\nu(\phi_0)\mu(\phi_m)]\e^{-B(\ll_m)t}}
{[B(\ll_m)-B(\ll_0)]\e^{(\ll_m-\ll_0)t^{-\beta}}}\phi_m\phi_0^{-1},\quad t>0.$$
Combining this with \eqref{SDHT-U}, Lemma \ref{Le4.1} and $B(\ll_m)\ge B(\ll_0)$ for each $m\in\mathbb{N}$, we easily deduce that there exist constants $c_3,t_0>0$ such that
\begin{equation}\begin{split}\label{1Le4.6+}
\|P_{t^{-\beta}}^0A_t\|_\infty\le \|\tilde{\rho}_{t,\beta}^{B,\nu}\|_\infty\le c_3 t^{\ff{(2d-2\aa+1)\beta}2-1},\quad t\ge t_0.
\end{split}\end{equation}
By Lemma \ref{density-h}, \eqref{1Le4.6} and \eqref{1Le4.6+}, we obtain
$$\|\rho_{t,\beta}^{B,\nu}\|_\infty\le c_2 \|h\phi_0^{-1}\|_{L^p(\mu_0)}  t^{\ff{(d+2)\beta}2}\e^{-[B(\ll_1)-B(\ll_0)]t}+c_3 t^{\ff{(2d-2\aa+1)\beta}2-1},\quad t\ge t_0.
$$

Since $\beta\in(0,\ff{2}{2d-2\aa+1})$, by Lemma \ref{Le4.1},
we can find a constant $t_1>0$ such that for any $t\ge t_1$, $\|\rho_{t,\beta}^{B,\nu}\|_\infty\le\ff 1 2$. So, $1+\rho_{t,\beta}^{B,\nu}\ge\ff 1 2$, $t\ge t_1$. Hence, by \eqref{wang-inequ}, we have
\begin{equation}\begin{split}\label{1+Le4.6}
\W_2(\mu_t^{B,\nu},\mu_{t,\beta}^{B,\nu})^2\le 8\int_M|\nn(-\L_0)^{-1}(\rho_t^{B,\nu}-\rho_{t,\beta}^{B,\nu})|^2\,\d\mu_0,\quad t\ge t_1.
\end{split}\end{equation}

It suffices to estimate the right hand side of \eqref{1+Le4.6}. Let $\epsilon=t^{-\beta}$. By the fact that $(-\mathcal{L}_0)^{-1/2}=a\int_0^\infty P_{s^2}^0\,\d s$ with $a=\frac{2}{\sqrt{\pi}}$, we have
\begin{equation}\begin{split}\label{1L4.1-1}
{\rm J}&:=\|\nabla(-\mathcal{L}_0)^{-1}(\rho_t^{B,\nu}-\rho_{t,\beta}^{B,\nu})\|_{L^2(\mu_0)}
=\|(-\mathcal{L}_0)^{-1/2}(\rho_t^{B,\nu}-\rho_{t,\beta}^{B,\nu})\|_{L^2(\mu_0)}\\
&=a\Big\|\int_0^\infty P_{s^2}^0(\rho_t^{B,\nu}-P_\epsilon^0\rho_t^{B,\nu})\,\d s \Big\|_{L^2(\mu_0)} =\frac{a}{2}\Big\|\int_0^\infty\frac{1}{\sqrt{r}}( P_r^0\rho_t^{B,\nu}
-P_{r+\epsilon}^0\rho_t^{B,\nu})\,\d r \Big\|_{L^2(\mu_0)}\\
&=\frac{a}{2}\Big\|\int_0^\infty\frac{1}{\sqrt{r}} P_r^0\rho_t^{B,\nu}\,\d r
-\int_\epsilon^\infty \frac{1}{\sqrt{r-\epsilon}}P_r^0\rho_t^{B,\nu}\,\d r \Big\|_{L^2(\mu_0)}\\
&=\frac{a}{2}\Big\|\int_\epsilon^\infty \Big(\frac{1}{\sqrt{r-\epsilon}}-\frac{1}{\sqrt{r}}\Big)P_r^0\rho_t^{B,\nu}\,\d r -\int_0^\epsilon\frac{1}{\sqrt{r}} P_r^0\rho_t^{B,\nu}\,\d r\Big\|_{L^2(\mu_0)}\\
&\leq \frac{a}{2}({\rm J}_1+{\rm J_2}),
\end{split}\end{equation}
where we applied twice the change-of-variables method, and let
$${\rm J}_1:=\int_\epsilon^\infty\Big(\frac{1}{\sqrt{r-\epsilon}}-\frac{1}{\sqrt{r}}\Big)\|P_r^0\rho_t^{B,\nu}\|_{L^2(\mu_0)}\,\d r,
\quad {\rm J}_2:=\int_0^\epsilon\frac{1}{\sqrt{r}}\| P_r^0\rho_t^{B,\nu}\|_{L^2(\mu_0)}\,\d r. $$

(2) Next we move to estimate ${\rm J}_1$ and ${\rm J}_2$ respectively.

(i) To estimate ${\rm J}_1$,  we aim to show that, for every $p\in(p_0,\infty]$, there exist constants $c,t_0>0$ such that
\begin{equation}\begin{split}\label{2Lee4.6}
{\rm J}_{1,1}&:=\int_\epsilon^\infty\Big(\frac{1}{\sqrt{r-\epsilon}}-\frac{1}{\sqrt{r}}\Big)(\|P_r^0\tilde{\rho}_t^{B,\nu}\|_{L^2(\mu_0)}+\|P_r^0 A_t\|_{L^2(\mu_0)})\,\d r\\
&\le c\|h\phi_0^{-1}\|_{L^p(\mu_0)}t^{-(1+\beta/2)},\quad t\ge t_0,
\end{split}\end{equation}
and
\begin{equation}\begin{split}\label{3Le4.6}
{\rm J}_{1,2}&:=\int_\epsilon^\infty\Big(\frac{1}{\sqrt{r-\epsilon}}-\frac{1}{\sqrt{r}}\Big)\ff 1{t\E^\nu[1_{\{t<\si_\tau^B\}}]}\left\|\int_0^t P_r^0\xi_s\,\d s\right\|_{L^2(\mu_0)}\,\d r\\
&\le c\e^{-[B(\ll_1)-B(\ll_0)]t}\|h\phi_0^{-1}\|_{L^{p}(\mu_0)}t^{-\beta/2},\quad t\ge t_0.
\end{split}\end{equation}

\underline{\emph{Proof of} \eqref{2Lee4.6}.}  By the expression of $\tilde{\rho}_t^{B,\nu}$, $A_t$ in \eqref{RAI} and \eqref{EIG0},
\begin{align*}&P_r^0\tilde{\rho}_t^{B,\nu}=\frac{\e^{-B(\lambda_0)t}}{t\E^{\nu}[1_{\{t<\sigma_\tau^B\}}]}
\sum_{m=1}^\infty\frac{\mu(\phi_0)\nu(\phi_m)+\nu(\phi_0)\mu(\phi_m)}{B(\lambda_m)-B(\lambda_0)}
\e^{-(\lambda_m-\lambda_0)r}\phi_m\phi_0^{-1},
\end{align*}
and
\begin{align*}
P_r^0A_t=\frac{1}{t\E^{\nu}[1_{\{t<\sigma_\tau^B\}}]}
\sum_{m=1}^\infty\frac{[\mu(\phi_0)\nu(\phi_m)+\nu(\phi_0)\mu(\phi_m)]\e^{-B(\lambda_m)t}}{B(\lambda_m)-B(\lambda_0)}
\e^{-(\lambda_m-\lambda_0)r}\phi_m\phi_0^{-1}.
\end{align*}
Since $\{\phi_m\phi_0^{-1}\}_{m\in\mathbb{N}_0}$ is an eigenbasis of $\mathcal{L}_0$ in $L^2(\mu_0)$, by \eqref{SDHT-U}, 
\eqref{equ-B} and \eqref{B-lb}, we derive that
\begin{equation}\begin{split}\label{1L4.1-2++}
&\|P_r^0 A_t\|_{L^2(\mu_0)}\le\big\|P_r^0\tilde{\rho}_t^{B,\nu}\big\|_{L^2(\mu_0)}\\&=\frac{\e^{-B(\lambda_0)t}}{t\E^{\nu}[1_{\{t<\sigma_\tau^B\}}]}\Big(\sum_{m=1}^\infty
\frac{[\mu(\phi_0)\nu(\phi_m)+\nu(\phi_0)\mu(\phi_m)]^2}{[B(\lambda_m)-B(\lambda_0)]^2}
\e^{-2(\lambda_m-\lambda_0)r}\Big)^{1/2}\\
&\leq \frac{c_4}{t}\Big(\sum_{m=1}^\infty
\frac{[\mu(\phi_0)\nu(\phi_m)+\nu(\phi_0)\mu(\phi_m)]^2}{(\lambda_m-\lambda_0)^{2\alpha}}
\e^{-2(\lambda_m-\lambda_0)r}\Big)^{1/2},\quad t\geq t_0,
\end{split}\end{equation}
for some constant $c_4>0$.

Let $\hat{h}=\mu(\phi_0)h\phi_0^{-1}+\nu(\phi_0)\phi_0^{-1}$. If $p_0<p<2\vee p_0$, which is equivalent to that $p_0\leq2$ and $p_0<p<2$, then
\begin{equation}\begin{split}\label{1L4.1-2+}
\|\hat{h}\|_{L^{p}(\mu_0)}&\leq\mu(\phi_0)\|h\phi_0^{-1}\|_{L^{p}(\mu_0)}+\nu(\phi_0)\|\phi_0^{-1}\|_{L^{p}(\mu_0)}\\
&\leq\mu(\phi_0)\|h\phi_0^{-1}\|_{L^{p}(\mu_0)}+\|h\phi_0^{-1}\|_{L^1(\mu_0)}\|\phi_0^{-1}\|_{L^2(\mu_0)}\\
&\le 2\|h\phi_0^{-1}\|_{L^{p}(\mu_0)},
\end{split}\end{equation}
and if $\infty\geq p\geq2\vee p_0$, then
\begin{equation}\begin{split}\label{1L4.1-2-}
\|\hat{h}\|_{L^2(\mu_0)}\leq\|h\phi_0^{-1}\|_{L^2(\mu_0)}+\|h\phi_0^{-1}\|_{L^1(\mu_0)}\leq 2\|h\phi_0^{-1}\|_{L^p(\mu_0)},
\end{split}\end{equation}
since $\mu(\phi_0)\le 1$ and $\|\phi_0^{-1}\|_{L^2(\mu_0)}=1$.
By \eqref{SG0},
$$(P_r^0-\mu_0)\hat{h}=\sum_{m=1}^\infty [\mu(\phi_0)\nu(\phi_m)+\nu(\phi_0)\mu(\phi_m)]\e^{-(\lambda_m-\lambda_0)r}\phi_m\phi_0^{-1},$$
which immediately leads to
\begin{equation}\begin{split}\label{1L4.1-3}
&\big\|(-\mathcal{L}_0)^{-\alpha}(P_r^0-\mu_0)\hat{h}\big\|_{L^2(\mu_0)}\\
&=\Big\|\sum_{m=1}^\infty\frac{\mu(\phi_0)\nu(\phi_m)+\nu(\phi_0)\mu(\phi_m)}{(\lambda_m-\lambda_0)^{\alpha}}\e^{-(\lambda_m-\lambda_0)r}\phi_m\phi_0^{-1} \Big\|_{L^2(\mu_0)}\\
&=\Big(\sum_{m=1}^\infty\frac{[\mu(\phi_0)\nu(\phi_m)+\nu(\phi_0)\mu(\phi_m)]^2}{(\lambda_m-\lambda_0)^{2\alpha}}
\e^{-2(\lambda_m-\lambda_0)r}\Big)^{1/2}.
\end{split}\end{equation}
Due to \eqref{PI0}, \eqref{1L4.1-2++}, \eqref{1L4.1-3}, \eqref{1L4.1-2+} (resp. \eqref{1L4.1-2-}) and the fact that $(-\mathcal{L}_0)^{-\alpha}=C\int_0^\infty P_{s^{1/\alpha}}^0\,\d s$ for some constant $C>0$, we have
\begin{equation*}\begin{split}\label{2Le4.6}
{\rm J}_{1,1}
&\leq \frac{c}{t}\int_\epsilon^\infty\Big(\frac{1}{\sqrt{r-\epsilon}}-\frac{1}{\sqrt{r}}\Big)
\int_0^\infty \|(P^0_{r+s^{1/\alpha}}-\mu_0)\hat{h}\|_{L^2(\mu_0)}\,\d s \d r\\
&\leq\frac{c}{t}\int_\epsilon^\infty\Big(\frac{1}{\sqrt{r-\epsilon}}-\frac{1}{\sqrt{r}}\Big)
\int_0^\infty \|P^0_{r+s^{1/\alpha}}-\mu_0\|_{L^{p}(\mu_0)\rightarrow L^2(\mu_0)}\|\hat{h}\|_{L^{p}(\mu_0)}\,\d s \d r\\
&\leq\frac{c}{t}\|h\phi_0^{-1}\|_{L^{p}(\mu_0)}\int_\epsilon^\infty\Big(\frac{1}{\sqrt{r-\epsilon}}-\frac{1}{\sqrt{r}}\Big)\int_0^\infty \e^{-(\lambda_1-\lambda_0)(r+s^{1/\alpha})}\big[1\wedge (r+s^{1/\aa})\big]^{-\ff{(d+2)(2-p)}{4p}}\,\d s \d r\\
&\leq\frac{c}{t}\|h\phi_0^{-1}\|_{L^{p}(\mu_0)}\int_\epsilon^\infty\Big(\frac{1}{\sqrt{r-\epsilon}}-\frac{1}{\sqrt{r}}\Big)\,\d r
\int_0^\infty\e^{-(\ll_1-\ll_0)s^{1/\aa}}(1\wedge{s^{1/\aa}})^{-\ff{(d+2)(2-p)}{4p}}\,\d s\\
&\leq c\|h\phi_0^{-1}\|_{L^{p}(\mu_0)}t^{-(1+\beta/2)},\quad  t\geq t_0,\\
\end{split}\end{equation*}
where the positive constant $c$ may vary from line to line, and in the last inequality we used the fact that
$$\int_\epsilon^\infty\Big(\ff 1{\sqrt{r-\epsilon}}-\ff 1 {\sqrt{r}}\Big)\,\d r=\ff{2\sqrt{\epsilon}}{\sqrt{\pi}},\quad\int_0^\infty\e^{-(\ll_1-\ll_0)s^{1/\aa}}(1\wedge{s^{1/\aa}})^{-\ff{(d+2)(2-p)}{4p}}\,\d s<\infty,$$
since $p>p_0>2(d+2)/(d+2+4\aa)$.

\underline{\emph{Proof of} \eqref{3Le4.6}.} Suppose that $p_0<p\leq 6$. (Note that $p_0<6$.) By H\"{o}lder's inequality, \eqref{PQ0} and \eqref{PHI}, there exist constants $c_5,c_6>0$ such that, for any $k,l>0$,
\begin{align*}
&\|[\eta_l^\nu-\nu(\phi_0)][P_k^0\phi_0^{-1}-\mu(\phi_0)]\|_{L^2(\mu_0)}\\
&\le\|P_l^0(h\phi_0^{-1})-\mu_0(h\phi_0^{-1})\|_{L^q(\mu_0)}\|P_k^0\phi_0^{-1}-\mu(\phi_0)\|_{L^{\ff{2q}{q-2}}(\mu_0)}\\
&\le\|P_l^0-\mu_0\|_{L^{p}(\mu_0)\to L^q(\mu_0)}\|h\phi_0^{-1}\|_{L^{p}(\mu_0)}\|P_k^0(\phi_0^{-1})-\mu(\phi_0)\|_{L^{\ff{2q}{q-2}}(\mu_0)}\\
&\le c_5\e^{-(\ll_1-\ll_0)(k+l)}(1\wedge l)^{-\ff{(d+2)(q-p)}{2pq}}\|h\phi_0^{-1}\|_{L^{p}(\mu_0)}\|\phi_0^{-1}\|_{L^{\ff{2q}{q-2}}(\mu_0)}\\
&\le c_6\e^{-(\ll_1-\ll_0)(k+l)}(1\wedge l)^{-\ff{(d+2)(q-p)}{2pq}}\|h\phi_0^{-1}\|_{L^{p}(\mu_0)},\quad  q\in (6,\infty].
\end{align*}
By \eqref{NU}, for every $p\in(p_0,\infty]$, we find a constant $c_7>0$ such that
$$|\nu(\phi_0\{\mu(\phi_0)-P_l^0\phi_0^{-1}\})|\le c_7 \e^{-(\ll_1-\ll_0)l}\|h\phi_0^{-1}\|_{L^p(\mu_0)},\quad l>0.$$
Hence, according to \eqref{LT} and \eqref{int-rep-xi}, we have
\begin{align*}
&\|\xi_s\|_{L^2(\mu_0)}\\
&\le\|h\phi_0^{-1}\|_{L^{p}(\mu_0)}\int_0^\infty\int_0^\infty \e^{-\ll_0(k+l)}\e^{-(\ll_1-\ll_0)(k+l)}(1\wedge l)^{-\ff{(d+2)(q-p)}{2pq}}\,\P(S_{t-s}^B\in\d k)\P(S_s^B\in\d l)\\
&\quad+\|h\phi_0^{-1}\|_{L^p(\mu_0)}\int_0^\infty \e^{-\ll_0 l}\e^{-(\ll_1-\ll_0)l}\P(S_t^B\in\d l)\\
&\le c\|h\phi_0^{-1}\|_{L^{p}(\mu_0)}\e^{-B(\ll_1)(t-s)}\int_0^\infty \e^{-\ll_1 l}(1\wedge l)^{-\ff{(d+2)(q-p)}{2pq}}\P(S_s^B\in\d l)\\
&\quad+ c\|h\phi_0^{-1}\|_{L^p(\mu_0)}\e^{-B(\ll_1)t}\\
&\le c\|h\phi_0^{-1}\|_{L^{p}(\mu_0)}\e^{-B(\ll_1)t}\Big(1+s^{-\ff{(d+2)(q-p)}{2\aa pq}}\Big)+ c\|h\phi_0^{-1}\|_{L^p(\mu_0)}\e^{-B(\ll_1)t},
\end{align*}
where the positive constant $c$ may vary from line to line. A similar argument as in \eqref{10+Th3.4+} leads to
\begin{equation}\begin{split}\label{3Le4.6-}
\|P_r^0\xi_s\|_{L^2(\mu_0)}\le c_8\|h\phi_0^{-1}\|_{L^{p}(\mu_0)}\e^{-B(\ll_1)t}\Big(1+s^{-\ff{(d+2)(q-p)}{2\aa pq}}\Big),\quad r>0,
\end{split}\end{equation}
for some constant $c_8>0$.

It is easy to see that
$$0\le \ff{(d+2)(6-p)}{12\aa p}<1,\quad p\in(p_0,6].$$
Then, for every $p\in (p_0,6]$, there exists $\bar{p}\in(6,\infty]$ such that
$$\vartheta:=\ff{(d+2){(\bar{p}-p)}}{2\aa p\bar{p}}\in(0,1).$$
Hence, by \eqref{3Le4.6-} and \eqref{SDHT-U}, we can find constants $c_{9},c_{10},t_0>0$ such that
\begin{equation*}\begin{split}
\ff{1}{t\E^\nu[1_{\{t<\si_\tau^B\}}]}\left\|\int_0^t P_r^0 \xi_s\,\d s\right\|_{L^2(\mu_0)}&\le\ff{1}{t\E^\nu[1_{\{t<\si_\tau^B\}}]}\int_0^t\|P_r^0\xi_s\|_{L^2(\mu_0)}\,\d s\\
&\le\ff{c_{9}\e^{-B(\ll_1)t}}{t\E^\nu[1_{\{t<\si_\tau^B\}}]}\|h\phi_0^{-1}\|_{L^{p}(\mu_0)}\int_0^t(1+s^{-\vartheta})\,\d s\\
&\le c_{10}\e^{-[B(\ll_1)-B(\ll_0)]t}\|h\phi_0^{-1}\|_{L^{p}(\mu_0)},\quad t\geq t_0.
\end{split}\end{equation*}
Thus, for every $p\in(p_0,6]$, there exist constants $t_0,c_{11}>0$ such that
\begin{equation*}\begin{split}\label{3Le4.6-A}
{\rm J}_{1,2}
&\le c_{10}\e^{-[B(\ll_1)-B(\ll_0)]t}\|h\phi_0^{-1}\|_{L^{p}(\mu_0)}\int_\epsilon^\infty\Big(\frac{1}{\sqrt{r-\epsilon}}-\frac{1}{\sqrt{r}}\Big)\,\d r\\
&\le c_{11}\e^{-[B(\ll_1)-B(\ll_0)]t}\|h\phi_0^{-1}\|_{L^{p}(\mu_0)}t^{-\beta/2},\quad t\ge t_0.
\end{split}\end{equation*}

Suppose that $p\in(6,\infty]$. Then there exists constant $c_{12}>0$ such that, for any $k,l>0$,
\begin{align*}
&\|[\eta_l^\nu-\nu(\phi_0)][P_k^0\phi_0^{-1}-\mu(\phi_0)]\|_{L^2(\mu_0)}\\
&\le\|P_l^0(h\phi_0^{-1})-\mu_0(h\phi_0^{-1})\|_{L^p(\mu_0)}\|P_k^0\phi_0^{-1}-\mu(\phi_0)\|_{L^{\ff{2p}{p-2}}(\mu_0)}\\
&\le c_{12}\e^{-(\ll_1-\ll_0)(k+l)}\|h\phi_0^{-1}\|_{L^p(\mu_0)},
\end{align*}
and a similar argument also leads to  \eqref{3Le4.6}.

Thus, by the definition of $\rho_t^{B,\nu}$, \eqref{2Lee4.6} and \eqref{3Le4.6} imply that, for every $p\in (p_0,\infty]$, there exist constants $c_{13},t_0>0$ such that
\begin{equation}\label{4Le4.6}
{\rm J_1}\le  c_{13}\|h\phi_0^{-1}\|_{L^p(\mu_0)}t^{-(1+\beta/2)},\quad t\ge t_0.
\end{equation}

(ii) To estimate ${\rm J}_2$,  by an analogous argument for \eqref{2Lee4.6} and \eqref{3Le4.6}, we have
\begin{equation*}\begin{split}\label{1L4.1-6}
&\int_0^\epsilon\frac{1}{\sqrt{r}}(\|P_r^0\tilde{\rho}_t^{B,\nu}\|_{L^2(\mu_0)}+\|P_r^0 A_t^{B,\nu}\|_{L^2(\mu_0)})\,\d r\\
&\leq \frac{c}{t}\int_0^\epsilon \frac{1}{\sqrt{r}}\|(-\mathcal{L}_0)^{-\alpha}(P_r^0-\mu_0)\hat{h}\|_{L^2(\mu_0)}\,\d r\\
&\le\frac{c}{t}\|h\phi_0^{-1}\|_{L^p(\mu_0)}\int_0^\epsilon\ff {\d r}{\sqrt{r}}\int_0^\infty
\e^{-(\ll_1-\ll_0)s^{1/\aa}}(1\wedge s^{1/\aa})^{-\ff{(d+2)(2-p)}{4p}}\,\d s\\
&\le c\|h\phi_0^{-1}\|_{L^p(\mu_0)} t^{-(1+\beta/2)},\quad t\ge t_0.
\end{split}\end{equation*}
and
\begin{equation}\begin{split}\label{7Le4.6}
&\int_0^\epsilon\frac{1}{\sqrt{r}}\ff 1{t\E^\nu[1_{\{t<\si_\tau^B\}}]}\left\|\int_0^t P_r^0\xi_s\d s\right\|_{L^2(\mu_0)}\,\d r\\
&\le c \|h\phi_0^{-1}\|_{L^p(\mu_0)}\e^{-[B(\ll_1)-B(\ll_0)]t}\int_0^\epsilon\frac{1}{\sqrt{r}}\,\d r\\
&\le c \|h\phi_0^{-1}\|_{L^p(\mu_0)}\e^{-[B(\ll_1)-B(\ll_0)]t}t^{-\beta/2},\quad t\ge t_0,
\end{split}\end{equation}
where the constant $c>0$ may vary from line to line. Thus, there exists a constant $c_{14}>0$, such that
\begin{equation}\label{5Le4.6}
{\rm J_2}\le c_{14} \|h\phi_0^{-1}\|_{L^p(\mu_0)}t^{-(1+\beta/2)},\quad t\ge t_0.
\end{equation}

(3) Therefore, by \eqref{1+Le4.6}, \eqref{1L4.1-1}, \eqref{4Le4.6}, \eqref{5Le4.6},  we complete the proof of Lemma \ref{Le4.6}.
\end{proof}

An alternative proof leads to the following result, which improves the rate of convergence in the case when $\aa\in(1/2,1]$.
However, $\alpha=1/2$ seems critical for the approach employed below. We postpone the proof of Remark \ref{impr-rate} to the end of this subsection.
\begin{rem}\label{impr-rate}
Assume that $\aa\in(1/2,1]$ and $B\in\textbf{B}^\aa$. Let $\beta\in(0,\ff 2{2d-2\aa+1})$ and $p\in(p_0,\infty]$. Then there exist constants $c,t_0>0$ such that, for any $t\ge t_0$ and any $\nu=h\mu\in\scr{P}_0$ with $h\phi_0^{-1}\in L^p(\mu_0)$,
$$
t^2\W_2(\mu_t^{B,\nu},\mu_{t,\beta}^{B,\nu})^2\le c  \|h\phi_0^{-1}\|_{L^p(\mu_0)}^2  t^{- 2\beta}.
$$
\end{rem}

The next proposition establishes the upper bound in Theorem \ref{T1.2}.
\begin{prp}\label{Th4.7}
Let $\aa\in(0,1]$, $B\in\textbf{B}^\aa$ and $p\in(p_0,\infty]$. Then for any $\nu=h\mu\in\scr{P}_0$ with $h\phi_0^{-1}\in L^p(\mu_0)$,
$$
\limsup_{t\to\infty}\{t^2\W_2(\mu_{t}^{B,\nu},\mu_0)^2\}\le
\ff{1}{\{\mu(\phi_0)\nu(\phi_0)\}^2}\sum_{m=1}^\infty\ff{[\mu(\phi_0)\nu(\phi_m)+\nu(\phi_0)\mu(\phi_m)]^2}{(\ll_m-\ll_0)[B(\ll_m)-B(\ll_0)]^2}.
$$
\end{prp}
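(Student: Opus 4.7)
The plan is to combine the triangle-inequality decomposition \eqref{TRI} with the three lemmas established earlier in this subsection. I fix $\beta\in(0,\ff{2}{2d-2\aa+1})$ once and for all; by Lemma~\ref{Le4.1} the regularized measure $\tilde{\mu}_{t,\beta}^{B,\nu}$ is a genuine probability measure for all $t$ large enough, and the exponent $\ff{(2d-2\aa+1)\beta}{2}-1$ that governs the correction factor in Lemma~\ref{Le4.2} is strictly negative.

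Multiplying \eqref{TRI} by $t^2$, I would treat each of the three terms on the right-hand side separately. For the dominant term $(1+\delta)\,t^2\W_2(\tilde{\mu}_{t,\beta}^{B,\nu},\mu_0)^2$, Lemma~\ref{Le4.2} yields the upper bound
$$\ff{1+ct^{(2d-2\aa+1)\beta/2-1}}{\{\mu(\phi_0)\nu(\phi_0)\}^2}\sum_{m=1}^\infty\ff{[\mu(\phi_0)\nu(\phi_m)+\nu(\phi_0)\mu(\phi_m)]^2}{(\ll_m-\ll_0)[B(\ll_m)-B(\ll_0)]^2}\e^{-2(\ll_m-\ll_0)t^{-\beta}}.$$
The scalar prefactor tends to $1$ as $t\to\infty$ by the choice of $\beta$. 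To take the limit inside the series I invoke dominated convergence: each summand is bounded in $t$ by the corresponding term of the target series, and under the hypothesis $h\phi_0^{-1}\in L^p(\mu_0)$ with $p>p_0$, the inequality $p_0\ge\ff{2(d+2)}{d+4+4\aa}$ (which reduces to $2d+10\ge 0$) places us in case~(2) of Theorem~\ref{T1.1}, guaranteeing finiteness of the target series. Pointwise $\e^{-2(\ll_m-\ll_0)t^{-\beta}}\to 1$ then delivers the target as the limit.

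The two error terms vanish after scaling: Lemma~\ref{Lemma4.4} makes $t^2\W_2(\tilde{\mu}_{t,\beta}^{B,\nu},\mu_{t,\beta}^{B,\nu})^2$ decay exponentially in $t$, while Lemma~\ref{Le4.6} gives $t^2\W_2(\mu_{t,\beta}^{B,\nu},\mu_t^{B,\nu})^2\le c\|h\phi_0^{-1}\|_{L^p(\mu_0)}^2\, t^{-\beta}\to 0$. Taking $\limsup_{t\to\infty}$ in the $t^2$-rescaled version of \eqref{TRI} thus leaves $(1+\delta)$ times the target series on the right; letting $\delta\to 0^+$ then concludes.

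Because all the heavy analytic work is already encapsulated in Lemmas~\ref{Le4.1}--\ref{Le4.6}, no genuine obstacle remains. The only subtle bookkeeping point is the dominated-convergence step, which needs the Theorem~\ref{T1.1} finiteness; this is exactly why the threshold $p_0$ in Theorem~\ref{T1.2} was chosen to absorb the exponent $\ff{2(d+2)}{d+4+4\aa}$ from Theorem~\ref{T1.1}.
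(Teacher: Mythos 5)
Your proposal is correct and follows essentially the same route as the paper: the decomposition \eqref{TRI}, Lemma \ref{Le4.2} for the dominant term, Lemmas \ref{Lemma4.4} and \ref{Le4.6} for the error terms, then $t\to\infty$ followed by $\delta\to 0^+$. The only difference is your dominated-convergence digression, which is not actually needed for a $\limsup$ upper bound (the bound is trivial if the target series diverges, and otherwise $\e^{-2(\ll_m-\ll_0)t^{-\beta}}\le 1$ already majorizes the sum by the target series for every $t$), but it is harmless.
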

\begin{proof}
By the triangle inequality of $\W_2$, we see that for any $\beta\in(0,\ff 2{2d-2\aa+1})$, there exists some constant $t_0>0$ such that
$\tilde{\mu}_{t,\beta}^{B,\nu}\in\scr{P}_0$ for every $t\geq t_0$ and
\begin{equation*}\begin{split}
t^2\W_2(\mu_t^{B,\nu},\mu_0)^2&\le(1+\dd)t^2\W_2(\tilde{\mu}_{t,\beta}^{B,\nu},\mu_0)^2+2(1+\dd^{-1})t^2\W_2(\mu_{t,\beta}^{B,\nu},\tilde{\mu}_{t,\beta}^{B,\nu})^2
\\
&\quad+2(1+\dd^{-1})t^2\W_2(\mu_t^{B,\nu},\mu_{t,\beta}^{B,\nu})^2,\quad t\geq t_0,\,\delta>0.
\end{split}\end{equation*}
According to this and Lemmas \ref{Le4.2}, \ref{Lemma4.4} and \ref{Le4.6}, for any $\beta\in(0,\ff 2 {2d-2\aa+1})$ and any $p\in(p_0,\infty]$, there exist some constants $t_0,c>0$ such that, for every $t\geq t_0$,
\begin{equation*}\begin{split}
t^2\W_2(\mu_t^{B,\nu},\mu_0)^2
&\le(1+\dd)\ff{1+ct^{\ff{(2d-2\aa+1)\beta}{2}-1}}{\{\mu(\phi_0)\nu(\phi_0)\}^2}\sum_{m=1}^\infty\ff{[\mu(\phi_0)\nu(\phi_m)+\nu(\phi_0)\mu(\phi_m)]^2}{(\ll_m-\ll_0)
[B(\ll_m)-B(\ll_0)]^2}\e^{-2(\ll_m-\ll_0)t^{-\beta}}\\
&\quad+2c(1+\dd^{-1})t^2 \|h\phi_0^{-1}\|_{L^p(\mu_0)}\e^{-[B(\ll_1)-B(\ll_0)]t}+2c(1+\dd^{-1})\|h\phi_0^{-1}\|_{L^p(\mu_0)}^2 t^{-\beta}.
\end{split}\end{equation*}
Since $\|h\phi_0^{-1}\|_{L^p(\mu_0)}<\infty$, by letting $t\to\infty$ first and then $\dd\to 0$, we have
\begin{equation*}\begin{split}
\lim_{t\rightarrow\infty}\{t^2\W_2(\mu_t^{B,\nu},\mu_0)^2\}\le\ff 1 {\{\mu(\phi_0)\nu(\phi_0)\}^2}\sum_{m=1}^\infty\ff{[\mu(\phi_0)\nu(\phi_m)+\nu(\phi_0)\mu(\phi_m)]^2}{(\ll_m-\ll_0)
[B(\ll_m)-B(\ll_0)]^2}.
\end{split}\end{equation*}
We finish the proof.
\end{proof}

To end this subsection, we present the proof of Remark \ref{impr-rate}.
\begin{proof}
We use the same notations as in Lemma \ref{Le4.6}. The positive constant $c$ used below may vary from line to line. By the definition of $\rho_t^{B,\nu},\rho_{t,\beta}^{B,\nu}$ and \eqref{RAI}, we have
\begin{equation}\begin{split}\label{1Re4.7}
{\rm J}=\|\nn(-\L_0)^{-1}(\rho_t^{B,\nu}-\rho_{t,\beta}^{B,\nu})\|_{L^2(\mu_0)}\le\tilde{{\rm J}}_1+\tilde{{\rm J}}_2+\tilde{{\rm J}}_3,
\end{split}\end{equation}
where
\begin{equation*}\begin{split}
&\tilde{{\rm J}}_1:=\|\nn(-\L_0)^{-1}(\tilde{\rho}_t^{B,\nu}-\tilde{\rho}_{t,\beta}^{B,\nu})\|_{L^2(\mu_0)},\\
&\tilde{{\rm J}}_2:=\|\nn(-\L_0)^{-1}(A_t-P_{t^{-\beta}}^0 A_t)\|_{L^2(\mu_0)},\\
&\tilde{{\rm J}}_3:=\ff{1}{t\E^\nu[1_{\{t<\si_\tau^B\}}]}\int_0^t\|\nn(-\L_0)^{-1}(\xi_s-P_{t^{-\beta}}^0(\xi_s))\|_{L^2(\mu_0)}\,\d s.
\end{split}\end{equation*}

Noting that
$$\tilde{\rho}_t^{B,\nu}-\tilde{\rho}_{t,\beta}^{B,\nu}=\tilde{\rho}_t^{B,\nu}-P_{t^{-\beta}}^0\tilde{\rho}_{t}^{B,\nu}
=\int_0^{t^{-\beta}}(-\mathcal{L}_0)P_r^0\tilde{\rho}_t^{B,\nu}\,\d r,$$
we have
\begin{equation}\begin{split}\label{1L4.1-1'}
\tilde{{\rm J}}_1&=\|\nabla(-\mathcal{L}_0)^{-1}(\tilde{\rho}_t^{B,\nu}-\tilde{\rho}_{t,\beta}^{B,\nu})\|_{L^2(\mu_0)}
=\|(-\mathcal{L}_0)^{-1/2}(\tilde{\rho}_t^{B,\nu}-\tilde{\rho}_{t,\beta}^{B,\nu})\|_{L^2(\mu_0)}\\
&=\Big\|\int_0^\epsilon(-\mathcal{L}_0)^{1/2}P_r^0\tilde{\rho}_t^{B,\nu}\,\d r \Big\|_{L^2(\mu_0)}
\leq\int_0^\epsilon\big\|(-\mathcal{L}_0)^{1/2}P_r^0\tilde{\rho}_t^{B,\nu}\big\|_{L^2(\mu_0)}\,\d r.
\end{split}\end{equation}
By the expression of $\tilde{\rho}_t^{B,\nu}$ in \eqref{RAI} and \eqref{EIG0},
$$(-\mathcal{L}_0)^{1/2}P_r^0\tilde{\rho}_t^{B,\nu}=a_t
\sum_{m=1}^\infty\frac{ \mu(\phi_0)\nu(\phi_m)+\nu(\phi_0)\mu(\phi_m) }{B(\lambda_m)-B(\lambda_0)}\sqrt{\lambda_m-\lambda_0}\,
\e^{-(\lambda_m-\lambda_0)r}\phi_m\phi_0^{-1},$$
where $a_t:=\frac{\e^{-B(\lambda_0)t}}{t\E^{\nu}[1_{\{t<\sigma_\tau^B\}}]}$.
Since $(\phi_m\phi_0^{-1})_{m\in\mathbb{N}_0}$ is an eigenbasis of $\mathcal{L}_0$ in $L^2(\mu_0)$, by \eqref{SDHT-U},
\eqref{equ-B} and \eqref{B-lb}, we derive that
\begin{equation}\begin{split}\label{1L4.1-2}
&\big\|(-\mathcal{L}_0)^{1/2}P_r^0\tilde{\rho}_t^{B,\nu}\big\|_{L^2(\mu_0)}\\
&=a_t\Big(\sum_{m=1}^\infty
\frac{[\mu(\phi_0)\nu(\phi_m)+\nu(\phi_0)\mu(\phi_m)]^2}{[B(\lambda_m)-B(\lambda_0)]^2}(\lambda_m-\lambda_0)
\e^{-2(\lambda_m-\lambda_0)r}\|\phi_m\phi_0^{-1}\|_{L^2(\mu_0)}^2\Big)^{1/2}\\
&\leq \frac{c}{t}\Big(\sum_{m=1}^\infty
\frac{[\mu(\phi_0)\nu(\phi_m)+\nu(\phi_0)\mu(\phi_m)]^2}{(\lambda_m-\lambda_0)^{2\alpha-1}}
\e^{-2(\lambda_m-\lambda_0)r}\Big)^{1/2},\quad t\geq t_0,
\end{split}\end{equation}
for some constant $c>0$. By a similar argument as $\tilde{{\rm J}}_1$ and \eqref{1L4.1-2}, we have
\begin{equation}\begin{split}\label{1L4.1-2''}
\tilde{{\rm J}}_2\le\int_0^\vv\|(-\L_0)^{\ff 1 2}P_r^0 A_t\|_{L^2(\mu_0)}\,\d r,
\end{split}\end{equation}
and
\begin{equation}\begin{split}\label{1L4.1-2'}
&\big\|(-\mathcal{L}_0)^{1/2}P_r^0 A_t\big\|_{L^2(\mu_0)}\\
&= \ff 1 {t\E^\nu[1_{\{t<\si_\tau^B\}}]}\Big(\sum_{m=1}^\infty
\frac{[\mu(\phi_0)\nu(\phi_m)+\nu(\phi_0)\mu(\phi_m)]^2\e^{-2B(\ll_m)t}}{[B(\lambda_m)-B(\lambda_0)]^2\e^{2(\lambda_m-\lambda_0)r}}(\lambda_m-\lambda_0)
\|\phi_m\phi_0^{-1}\|_{L^2(\mu_0)}^2\Big)^{1/2}\\
&\leq \frac{c}{t}\Big(\sum_{m=1}^\infty
\frac{[\mu(\phi_0)\nu(\phi_m)+\nu(\phi_0)\mu(\phi_m)]^2}{(\lambda_m-\lambda_0)^{2\alpha-1}}
\e^{-2(\lambda_m-\lambda_0)r}\Big)^{1/2},\quad t\geq t_0,
\end{split}\end{equation}
for some constant $c>0$.

Let $\hat{h}=\mu(\phi_0)h\phi_0^{-1}+\nu(\phi_0)\phi_0^{-1}$. Since
$$(P_r^0-\mu_0)\hat{h}=\sum_{m=1}^\infty [\mu(\phi_0)\nu(\phi_m)+\nu(\phi_0)\mu(\phi_m)]\e^{-(\lambda_m-\lambda_0)r}\phi_m\phi_0^{-1},$$
 by \eqref{EIG0}, we immediately have
\begin{equation}\begin{split}\label{1L4.1-3'}
\big\|(-\mathcal{L}_0)^{1/2-\alpha}(P_r^0-\mu_0)\hat{h}\big\|_{L^2(\mu_0)}
=\Big(\sum_{m=1}^\infty\frac{[\mu(\phi_0)\nu(\phi_m)+\nu(\phi_0)\mu(\phi_m)]^2}{(\lambda_m-\lambda_0)^{2\alpha-1}}
\e^{-2(\lambda_m-\lambda_0)r}\Big)^{1/2}.
\end{split}\end{equation}

Thus, combining \eqref{1L4.1-1'}, \eqref{1L4.1-2} \eqref{1L4.1-2''}, \eqref{1L4.1-2'} and \eqref{1L4.1-3'}, we obtain
\begin{equation*}\begin{split}\label{1L4.1-4'}
\tilde{{\rm J}}_1+\tilde{{\rm J}}_2\leq \frac{c}{t}\int_0^\epsilon\big\|(-\mathcal{L}_0)^{1/2-\alpha}(P_r^0-\mu_0)\hat{h}\big\|_{L^2(\mu_0)}\,\d r,\quad t\geq t_0.
\end{split}\end{equation*}

Suppose that $ p_0<p<2\vee p_0$. By \eqref{PI0}, \eqref{1L4.1-2+}  and the fact that $(-\mathcal{L}_0)^{1/2-\alpha}=c\int_0^\infty P_{s^{2/(2\alpha-1)}}^0\,\d s$, we have
\begin{equation}\begin{split}\label{2Re4.7}
\tilde{{\rm J}}_1+\tilde{{\rm J}}_2&\leq \frac{c}{t}\int_0^\epsilon\Big\|\int_0^\infty(P^0_{r+s^{2/(2\alpha-1)}}-\mu_0)\hat{h}\,\d s\Big\|_{L^2(\mu_0)}\,\d r\\
&\leq\frac{c}{t}\int_0^\epsilon\int_0^\infty\big\|(P^0_{r+s^{2/(2\alpha-1)}}-\mu_0)\hat{h}\big\|_{L^2(\mu_0)}\,\d s\d r\\
&\le \frac{c}{t}\int_0^\epsilon\int_0^\infty\|P_{r+s^{2/(2\alpha-1)}}^0-\mu_0\|_{L^p(\mu_0)\to L^2(\mu_0)}\|\hat{h}\|_{L^p(\mu_0)}\,\d s\d r\\
&\le\ff c {t}\|h\phi_0^{-1}\|_{L^p(\mu_0)}\int_0^\epsilon \e^{-(\ll_1-\ll_0)r}\,\d r\int_0^\infty \e^{-(\ll_1-\ll_0)s^{2/(2\aa-1)}}(1\wedge s^{2/(2\aa-1)})^{-\ff{(d+2)(2-p)}{4p}}\,\d s\\
&\le c\|h\phi_0^{-1}\|_{L^p(\mu_0)} t^{-(1+ \beta)},\quad t\geq t_0,
\end{split}\end{equation}
for some constant $c>0$, where we applied the fact that
$$\int_0^\infty \e^{-(\ll_1-\ll_0)s^{2/(2\aa-1)}}(1\wedge s^{2/(2\aa-1)})^{-\ff{(d+2)(2-p)}{4p}}\d s<\infty,$$
since
$$0\leq\ff 2 {2\aa-1}\ff{(d+2)(2-p)}{4p}<1,\quad p\in(p_0,2],\,\alpha\in(1/2,1].$$
Suppose that $ p_0\vee2\leq p\leq\infty$. By an analogous argument, we also have \eqref{2Re4.7}.

Now we turn to estimate $\tilde{{\rm J}}_3$. By \eqref{1L4.1-1}, \eqref{3Le4.6} and \eqref{7Le4.6},
\begin{equation}\begin{split}\label{4Re4.7}
&\tilde{{\rm J}}_3
\le\ff {c}{t\E^\nu[1_{\{t<\si_\tau^B\}}]}\int_0^t\left\|\int_\epsilon^\infty\left(\ff 1{\sqrt{r-\epsilon}}-\ff 1{\sqrt{r}}\right)P_r^0 \xi_s\,\d r-\int_0^\epsilon\ff 1 {\sqrt{r}}P_r^0 \xi_s\,\d r\right\|_{L^2(\mu_0)}\,\d s\\
&\le\ff {c}{t\E^\nu[1_{\{t<\si_\tau^B\}}]}\int_0^t\int_\epsilon^\infty\left(\ff 1{\sqrt{r-\epsilon}}-\ff 1{\sqrt{r}}\right)\|P_r^0 \xi_s\|_{L^2(\mu_0)}\,\d r\d s\\
&\quad+\ff {c}{t\E^\nu[1_{\{t<\si_\tau^B\}}]}\int_0^t\int_0^\epsilon\ff 1{\sqrt{r}}\|P_r^0 \xi_s\|_{L^2(\mu_0)}\,\d r\d s\\
&\le c \|h\phi_0^{-1}\|_{L^p(\mu_0)}\e^{-[B(\ll_1)-B(\ll_0)]t}\left[\int_\epsilon^\infty\left(\ff 1{\sqrt{r-\epsilon}}-\ff 1 {\sqrt{r}}\right)\d r+\int_0^\epsilon\ff 1{\sqrt{r}}\,\d r\right]\\
&\le c \|h\phi_0^{-1}\|_{L^p(\mu_0)}\e^{-[B(\ll_1)-B(\ll_0)]t}t^{-\beta/2},\quad t\geq t_0,
\end{split}\end{equation}
for some constant $c>0$.

Therefore, combining \eqref{1+Le4.6}, \eqref{1Re4.7}, \eqref{2Re4.7} and \eqref{4Re4.7}, we complete the proof.
\end{proof}

\subsection{Lower bounds}
In this subsection, we present the proof of the lower bound in Theorem \ref{T1.2}. The idea is motivated by \cite[Section 4]{eW1}.
Note that, by Lemma \ref{Le4.1} above, for any $\beta\in(0,\ff 2{2d-2\aa+1})$, there exists some constant $t_0>0$ such that
$\tilde{\mu}_{t,\beta}^{B,\nu}\in\scr{P}_0$ for every $t\ge t_0$ and every $\nu\in\scr{P}_0$.

Set $f_{t,\beta}^B:=(-\mathcal{L}_0)^{-1}\tilde{\rho}_{t,\beta}^{B,\nu}$. The next lemma establishes useful regularity estimates for $f_{t,\beta}^B$ and its gradient.
\begin{lem}\label{L-4.1}
For any $\beta>0$, there exists a constant $c>0$ such that
$$\|f_{t,\beta}^B\|_\infty+\|\mathcal{L}_0 f_{t,\beta}^B\|_\infty+\|\nn f_{t,\beta}^B\|_\infty\le c t^{\ff{(5d+2)\bb}4-1},\quad t\ge 1,\,\nu\in\scr{P}_0.$$
\end{lem}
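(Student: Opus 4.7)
My plan is to use the spectral representation of $f_{t,\beta}^B$ in the eigenbasis $\{\phi_m\phi_0^{-1}\}_{m\in\N_0}$ of $-\mathcal{L}_0$. Since $(-\mathcal{L}_0)^{-1}$ acts on $\phi_m\phi_0^{-1}$ as multiplication by $(\ll_m-\ll_0)^{-1}$ for $m\ge 1$ and $P_{t^{-\beta}}^0$ acts as multiplication by $\e^{-(\ll_m-\ll_0)t^{-\beta}}$, applying these to the expression \eqref{RAI} for $\tilde{\rho}_t^{B,\nu}$ gives
\[
f_{t,\beta}^B=\ff{\e^{-B(\ll_0)t}}{t\E^\nu[1_{\{t<\si_\tau^B\}}]}\sum_{m=1}^\infty\ff{\mu(\phi_0)\nu(\phi_m)+\nu(\phi_0)\mu(\phi_m)}{(\ll_m-\ll_0)[B(\ll_m)-B(\ll_0)]}\e^{-(\ll_m-\ll_0)t^{-\beta}}\phi_m\phi_0^{-1}.
\]
By \eqref{SDHT-U} the prefactor is $O(1/t)$ for $t\ge 1$.

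The bound on $\|\mathcal{L}_0 f_{t,\beta}^B\|_\infty$ is essentially free: $\mathcal{L}_0 f_{t,\beta}^B=-\tilde{\rho}_{t,\beta}^{B,\nu}$, so Lemma \ref{Le4.1} already yields $\|\mathcal{L}_0 f_{t,\beta}^B\|_\infty\le c\, t^{(2d-2\aa+1)\beta/2-1}$, and since $(5d+2)/4-(2d-2\aa+1)/2=(d+4\aa)/4\ge 0$, this is dominated by $t^{(5d+2)\beta/4-1}$ for $t\ge 1$. For the other two norms I will bound the series termwise using the standard ingredients: $|\mu(\phi_m)|\le\|\phi_m\|_\infty\le \aa_0\sqrt{m}$ and similarly $|\nu(\phi_m)|\le \aa_0\sqrt{m}$; the denominator is bounded below via \eqref{EIG} and \eqref{B-lb} by $c\,m^{(2+2\aa)/d}$; and the eigenfunction factor is controlled by $\|\phi_m\phi_0^{-1}\|_\infty\le\aa_4 m^{(d+2)/(2d)}$ (from \eqref{EIG0UB}) for the sup norm, and by $\|\nn(\phi_m\phi_0^{-1})\|_\infty\le\aa_5 m^{(d+4)/(2d)}$ (from \eqref{grad-EIG0UB}) for the gradient.

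Putting these together reduces the problem to estimating series of the form $\sum_{m\ge 1} m^a \e^{-c m^{2/d}t^{-\beta}}$ for suitable $a$: namely $a=\ff12+\ff{d+2}{2d}-\ff{2+2\aa}{d}=1-\ff{1+2\aa}{d}$ for $\|f_{t,\beta}^B\|_\infty$, and $a=1-\ff{2\aa}{d}$ for $\|\nn f_{t,\beta}^B\|_\infty$. By comparison with the integral and the substitution $u=m^{2/d}t^{-\beta}$, each such sum is bounded by $C\, t^{\beta d(a+1)/2}$, which upon division by $t$ yields
\[
\|f_{t,\beta}^B\|_\infty\le c\, t^{\beta(2d-1-2\aa)/2-1},\qquad \|\nn f_{t,\beta}^B\|_\infty\le c\, t^{\beta(d-\aa)-1}.
\]
Both exponents are $\le (5d+2)\beta/4$: the former since $(5d+2)/4-(2d-1-2\aa)/2=(d+4+4\aa)/4>0$, and the latter since $(5d+2)/4-(d-\aa)=(d+2+4\aa)/4>0$. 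Combining, each of the three quantities is $\le c\, t^{(5d+2)\beta/4-1}$ for $t\ge 1$, as required.

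The proof is largely bookkeeping once the spectral representation is written down; the only mildly delicate point is the integral comparison, where one must justify that a power-of-$m$ sum times a stretched Gaussian decay really is $O(t^{\beta d(a+1)/2})$ uniformly in $t\ge 1$ (which requires $a>-1$, satisfied in both cases for $\aa\in(0,1]$ and $d\ge 1$). No genuinely new idea beyond Lemma \ref{Le4.1} is needed.
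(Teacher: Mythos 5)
Your proof is correct and follows essentially the same route as the paper: write $f_{t,\beta}^B$ and its image under $\mathcal{L}_0$ spectrally, bound the series termwise via \eqref{EIG}, \eqref{EIG0UB}, \eqref{grad-EIG0UB}, \eqref{B-lb} and \eqref{SDHT-U}, and finish by an integral comparison with the substitution $u=s^{2/d}t^{-\beta}$. The only (harmless) deviations are that you bound $|\mu(\phi_0)\nu(\phi_m)+\nu(\phi_0)\mu(\phi_m)|$ by $C\sqrt{m}$ where the paper uses the cruder $Cm$ (so your intermediate exponents are slightly sharper, but both are dominated by $t^{(5d+2)\beta/4-1}$), and that you dispatch $\|\mathcal{L}_0 f_{t,\beta}^B\|_\infty$ by citing Lemma \ref{Le4.1} directly instead of re-running the series estimate.
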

\begin{proof}
By \eqref{EIG0} and \eqref{RAI},
we have, for every $t\geq1$,
\begin{align*}
&(-\mathcal{L}_0) f_{t,\beta}^B=P_{t^{-\beta}}^0\tilde{\rho}_{t}^{B,\nu}\\
&=\ff 1{t\E^\nu[1_{\{t<\si_\tau^B\}}]} \sum_{m=1}^\infty\ff{[\mu(\phi_0)\nu(\phi_m)+\nu(\phi_0)\mu(\phi_m)]\e^{-B(\ll_0)t}}{B(\ll_m)-B(\ll_0)}
\e^{-(\ll_m-\ll_0)t^{-\beta}}\phi_m\phi_0^{-1},
\end{align*}
and
\begin{align*}
f_{t,\beta}^B=\ff 1{t\E^\nu[1_{\{t<\si_\tau^B\}}]} \sum_{m=1}^\infty\ff{[\mu(\phi_0)\nu(\phi_m)+\nu(\phi_0)\mu(\phi_m)]\e^{-B(\ll_0)t}}{(\ll_m-\ll_0)[B(\ll_m)-B(\ll_0)]}
\e^{-(\ll_m-\ll_0)t^{-\beta}}\phi_m\phi_0^{-1}.
\end{align*}
Combining these identities with \eqref{EIG}, \eqref{EIG0UB}, \eqref{SDHT-U}, \eqref{equ-B}, \eqref{B-lb} and the fact that
$$|\mu(\phi_0)\nu(\phi_m)+\nu(\phi_0)\mu(\phi_m)|\le \|\phi_0\|_\infty^2+\|\phi_m\|_\infty^2\le c_1 m,\quad m\in\mathbb{N},$$
for some constant $c_1>0$, we find constants $c_2,c_3,c_4,c_5>0$ such that
\begin{align*}
t\{\|f_{t,\beta}^B\|_\infty+\|\mathcal{L}_0 f_{t,\beta}^B\|_\infty\}&\le c_2\sum_{m=1}^\infty\ff{\e^{-(\ll_m-\ll_0)t^{-\beta}}}{B(\ll_m)-B(\ll_0)}m^{\ff{3d+2}{2d}}\\
&\le c_3\sum_{m=1}^\infty \e^{-\alpha_0^{-1}m^{\ff 2 d}t^{-\beta}}m^{\ff{3d+2}{2d}-\ff{2\aa}{d}}\\
&\le c_4 \int_0^\infty \e^{-\alpha_0^{-1}s^{\ff 2 d}t^{-\beta}}s^{\ff{3d+2-4\aa}{2d}}\,\d s\\
&\le c_5 t^{\ff{5d\beta+(2-4\aa)\beta}{4}},\quad t\geq1,
\end{align*}
where $\alpha_0$ is from \eqref{EIG}. By a similar argument as above, applying \eqref{grad-EIG0UB}, we deduce that there 
exist constants $c_6,c_7,c_8>0$ such that
\begin{align*}
t\|\nn f_{t,\beta}^B\|_\infty&\le c_6\sum_{m=1}^\infty \ff{\e^{-(\ll_m-\ll_0)t^{-\beta}}}{[B(\ll_m)-B(\ll_0)](\ll_m-\ll_0)}m^{\ff{3d+4}{2d}}\\
&\le c_7\sum_{m=1}^\infty \e^{-\alpha_0^{-1}m^{\ff 2 d}t^{-\beta}}m^{\ff{3d+4}{2d}-\ff{2(1+\aa)}{d}}\\
&\le c_8 t^{\ff{\beta(5d-4\aa)}4},\quad t\ge 1.
\end{align*}
The proof is completed.
\end{proof}

In the following lemma, we present the lower bound estimate for $\W_2(\tilde{\mu}_{t,\beta}^{B,\nu},\mu_0)$. With Lemma \ref{L-4.1} in hand, the  proof can be achieved by the same approach employed for \cite[Lemma 4.2]{eW1}. So we omit the details here.
\begin{lem}\label{L4.2}
For any $\beta\in(0,\ff 1 {4(5d+2)}]$, there exist constants $c,t_0>0$ such that
$$t^2\W_2(\tilde{\mu}_{t,\beta}^{B,\nu},\mu_0)^2\ge \hat{I}-c t^{-\ff 1 4},\quad t\geq t_0,\,\nu\in\scr{P}_0,$$
where $$\hat{I}:=\ff 1 {\{\mu(\phi_0)\nu(\phi_0)\}^2}\sum_{m=1}^\infty
\ff{[\mu(\phi_0)\nu(\phi_m)+\nu(\phi_0)\mu(\phi_m)]^2}{(\ll_m-\ll_0)[B(\ll_m)-B(\ll_0)]^2}\e^{-2(\ll_m-\ll_0)t^{-\beta}},\quad t>0,\,\nu\in\scr{P}_0.$$
\end{lem}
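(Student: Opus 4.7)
The plan is to follow closely the Kantorovich-duality approach used by Wang in \cite[Lemma 4.2]{eW1}, with the regularity estimates of Lemma \ref{L-4.1} taking the place of the corresponding estimates in the local setting. Write $g_t := f_{t,\beta}^B = (-\mathcal{L}_0)^{-1}\tilde{\rho}_{t,\beta}^{B,\nu}$. First I would compute $\|\nabla g_t\|_{L^2(\mu_0)}^2$ directly via the spectral expansion
\[
\|\nabla g_t\|_{L^2(\mu_0)}^2 = \int_M g_t\, \tilde{\rho}_{t,\beta}^{B,\nu}\,\d\mu_0,
\]
which, by the same computation as in the proof of Lemma \ref{Le4.2}, gives
\[
t^2\|\nabla g_t\|_{L^2(\mu_0)}^2 = \ff{\e^{-2B(\ll_0)t}}{(\E^\nu[1_{\{t<\si_\tau^B\}}])^2}\sum_{m=1}^\infty\ff{[\mu(\phi_0)\nu(\phi_m)+\nu(\phi_0)\mu(\phi_m)]^2}{(\ll_m-\ll_0)[B(\ll_m)-B(\ll_0)]^2}\e^{-2(\ll_m-\ll_0)t^{-\beta}}.
\]
Comparing with $\hat{I}$, the only difference is the prefactor $\{\e^{-B(\ll_0)t}/\E^\nu[1_{\{t<\si_\tau^B\}}]\}^{2}$ versus $\{\mu(\phi_0)\nu(\phi_0)\}^{-2}$, which by \eqref{SDHT-U} differ by an exponentially small error $O(\e^{-c t})$. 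So $t^2\|\nabla g_t\|_{L^2(\mu_0)}^2 \ge \hat{I} - C\e^{-ct}$.

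The main task is then to lower-bound $\W_2^2(\mu_0,\tilde{\mu}_{t,\beta}^{B,\nu})$ by $\|\nabla g_t\|_{L^2(\mu_0)}^2$ with a controlled error. I would use Kantorovich duality for the cost $c(x,y)=\rho(x,y)^2$ with test function $2g_t$:
\[
\W_2(\mu_0,\tilde{\mu}_{t,\beta}^{B,\nu})^2 \ge \int_M 2g_t\,\d\tilde{\mu}_{t,\beta}^{B,\nu} - \int_M (2g_t)^c\,\d\mu_0,
\]
where $(2g_t)^c(x) = \sup_y\{2g_t(y)-\rho(x,y)^2\}$. Because $\|\nabla g_t\|_\infty$ is tiny (by Lemma \ref{L-4.1}, of order $t^{(5d+2)\beta/4-1}$, which for $\beta \le 1/(4(5d+2))$ is $\le c t^{-15/16}$), the supremum in the $c$-transform is attained at a point $y^\ast(x)$ with $\rho(x,y^\ast(x))\approx|\nabla g_t(x)|$, which lies well inside the injectivity radius for large $t$. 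Parametrising $y=\exp_x(v)$ and expanding to second order in $v$ along the geodesic gives
\[
(2g_t)^c(x) \le 2g_t(x) + |\nabla g_t(x)|^2 + R_t(x),
\]
where the remainder $R_t$ is controlled, via Lemma \ref{L-4.1} together with elliptic regularity to pass from $\|\mathcal{L}_0 g_t\|_\infty$ to an $L^p$-bound on the Hessian and curvature bounds on the compact $M$, by a universal constant times $\|\nabla g_t\|_\infty \cdot |\nabla g_t(x)|^2$ plus genuinely smaller corrections.

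Integrating and using $\int g_t \tilde{\rho}_{t,\beta}^{B,\nu}\d\mu_0 = \|\nabla g_t\|_{L^2(\mu_0)}^2$ together with $\mu_0(g_t)=0$, the duality bound collapses to
\[
\W_2(\mu_0,\tilde{\mu}_{t,\beta}^{B,\nu})^2 \ge \|\nabla g_t\|_{L^2(\mu_0)}^2 - \int_M R_t\,\d\mu_0 \ge \|\nabla g_t\|_{L^2(\mu_0)}^2 - C\|\nabla g_t\|_\infty \|\nabla g_t\|_{L^2(\mu_0)}^2.
\]
Multiplying by $t^2$ and inserting the bounds $t^2\|\nabla g_t\|_{L^2(\mu_0)}^2 \le \hat{I}+o(1)$ and $\|\nabla g_t\|_\infty \le c t^{(5d+2)\beta/4-1} \le c t^{-15/16}$, the error term is at most $C t^{-15/16}\hat{I} \le c t^{-1/4}$, which yields the desired lower bound. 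The main obstacle I anticipate is Step 3: rigorously controlling the remainder $R_t$ on the manifold, since a pointwise Hessian bound for $g_t$ is not immediate from Lemma \ref{L-4.1}, and one must combine the $L^\infty$-bound on $\mathcal{L}_0 g_t$ with standard Schauder/$W^{2,p}$ estimates, the bounded curvature of $M$, and the smallness of $\|\nabla g_t\|_\infty$ to trade the missing $\|\mathrm{Hess}\, g_t\|_\infty$ for a constant. The specific constraint $\beta\le 1/(4(5d+2))$ is then exactly what is needed to absorb all these error terms into the $ct^{-1/4}$ budget.
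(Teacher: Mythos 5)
Your overall strategy --- identifying $t^2\mu_0(|\nn f_{t,\beta}^B|^2)$ with $\hat{I}$ up to an exponentially small correction via \eqref{SDHT-U}, and then lower-bounding $\W_2(\tilde{\mu}_{t,\beta}^{B,\nu},\mu_0)^2$ by $\mu_0(|\nn f_{t,\beta}^B|^2)$ through Kantorovich duality with test function $2f_{t,\beta}^B$ --- is the right one, and it is essentially the strategy of the proof the paper invokes (the paper gives no details and defers entirely to \cite[Lemma 4.2]{eW1}). Your spectral computation in the first step is correct and matches \eqref{3Th3.4}. The genuine gap, which you flag but do not close, is the second-order remainder $R_t$ in the $c$-transform. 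Expanding $\sup_v\{2\langle\nn f_{t,\beta}^B(x),v\rangle-|v|^2(1-\|\Hess f_{t,\beta}^B\|_\infty)\}$ shows the natural remainder is of size $\|\Hess f_{t,\beta}^B\|_\infty\,|\nn f_{t,\beta}^B(x)|^2$, not $\|\nn f_{t,\beta}^B\|_\infty\,|\nn f_{t,\beta}^B(x)|^2$ as you assert; there is no reason the Hessian is dominated by the gradient. More seriously, the route you propose to a Hessian bound does not work: (i) an $L^p$ Hessian bound with $p<\infty$ cannot control the pointwise supremum defining $(2f_{t,\beta}^B)^c(x)$ at every $x$, and the borderline $W^{2,\infty}$ Calder\'on--Zygmund estimate from $\|\mathcal{L}_0 f_{t,\beta}^B\|_\infty+\|f_{t,\beta}^B\|_\infty$ is false; (ii) even interior Schauder/$W^{2,p}$ estimates for $\mathcal{L}_0=\mathcal{L}+2\nn\log\phi_0$ degenerate near $\pp M$, because $\phi_0$ vanishes at the boundary and the drift $2\nn\log\phi_0$ blows up like the reciprocal of the distance to $\pp M$. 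This is exactly why Lemma \ref{L-4.1} bounds $\|\mathcal{L}_0 f_{t,\beta}^B\|_\infty$ rather than the Hessian: the argument of \cite[Lemma 4.2]{eW1} is engineered to absorb the quadratic error in the dual (Hopf--Lax) estimate using only $\|f_{t,\beta}^B\|_\infty$, $\|\nn f_{t,\beta}^B\|_\infty$ and $\|\mathcal{L}_0 f_{t,\beta}^B\|_\infty$; in your write-up the quantity $\|\mathcal{L}_0 f_{t,\beta}^B\|_\infty$ plays no effective role, which is the symptom of the missing ingredient.

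A secondary issue: your final bookkeeping ``$Ct^{-15/16}\hat{I}\le ct^{-1/4}$'' tacitly treats $\hat{I}$ as bounded in $t$. The lemma is asserted for all $\nu\in\scr{P}_0$, for which $\hat{I}$ may grow with $t$ (at worst like a power $t^{\beta(d-1)}$, by \eqref{EIG} and the cut-off $\e^{-2(\ll_m-\ll_0)t^{-\beta}}$); this is harmless under the constraint $\beta\le\ff{1}{4(5d+2)}$, but it must be checked, and the same caveat applies to the prefactor comparison $\e^{-2B(\ll_0)t}(\E^\nu[1_{\{t<\si_\tau^B\}}])^{-2}\ge\{\mu(\phi_0)\nu(\phi_0)\}^{-2}(1-C\e^{-ct})$ in your first step, which is likewise multiplied by a possibly unbounded $\hat{I}$.
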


The main result of this subsection, which is just the repeat of the lower bound in Theorem \ref{T1.2}, is presented in the next proposition.
\begin{prp}\label{P4.4}
Let $\aa\in(0,1]$, $B\in\textbf{B}^\aa$ and $p\in(p_0,\infty]$. Then for any $\nu=h\mu\in\scr{P}_0$ with $h\phi_0^{-1}\in L^p(\mu_0)$,
\begin{equation*}
\liminf_{t\to\infty}\{t^2\W_2(\mu_t^{B,\nu},\mu_0)^2\}\ge  \ff 1 {\{\mu(\phi_0)\nu(\phi_0)\}^2}\sum_{m=1}^\infty
\ff{[\mu(\phi_0)\nu(\phi_m)+\nu(\phi_0)\mu(\phi_m)]^2}{(\ll_m-\ll_0)[B(\ll_m)-B(\ll_0)]^2}.
\end{equation*}
\end{prp}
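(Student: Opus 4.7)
The plan is to combine the lower bound for $\W_2(\tilde{\mu}_{t,\beta}^{B,\nu},\mu_0)^2$ from Lemma \ref{L4.2} with the error estimates already established in the upper-bound portion (Lemmas \ref{Lemma4.4} and \ref{Le4.6}), via the triangle inequality for $\W_2$ applied in the reverse direction. Concretely, for any $\delta>0$, from $\W_2(\tilde{\mu}_{t,\beta}^{B,\nu},\mu_0)\le \W_2(\tilde{\mu}_{t,\beta}^{B,\nu},\mu_t^{B,\nu})+\W_2(\mu_t^{B,\nu},\mu_0)$ I obtain
\begin{equation*}
t^2\W_2(\mu_t^{B,\nu},\mu_0)^2\ge \ff{1}{1+\dd}\,t^2\W_2(\tilde{\mu}_{t,\beta}^{B,\nu},\mu_0)^2-\ff{1+\dd^{-1}}{1+\dd}\,t^2\W_2(\tilde{\mu}_{t,\beta}^{B,\nu},\mu_t^{B,\nu})^2.
\end{equation*}
I then fix $\beta\in(0,\frac{1}{4(5d+2)}]$, which is also inside $(0,\frac{2}{2d-2\alpha+1})$, so that both Lemma \ref{L4.2} and the earlier error lemmas apply.

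Next I would bound the error term. Writing $\W_2(\tilde{\mu}_{t,\beta}^{B,\nu},\mu_t^{B,\nu})\le \W_2(\tilde{\mu}_{t,\beta}^{B,\nu},\mu_{t,\beta}^{B,\nu})+\W_2(\mu_{t,\beta}^{B,\nu},\mu_t^{B,\nu})$, Lemma \ref{Lemma4.4} makes the first summand exponentially small in $t$, and Lemma \ref{Le4.6} gives $t^2\W_2(\mu_{t,\beta}^{B,\nu},\mu_t^{B,\nu})^2\le c\|h\phi_0^{-1}\|_{L^p(\mu_0)}^2\,t^{-\beta}$; hence $t^2\W_2(\tilde{\mu}_{t,\beta}^{B,\nu},\mu_t^{B,\nu})^2\to 0$ as $t\to\infty$. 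Using Lemma \ref{L4.2} for the dominant term gives $t^2\W_2(\tilde{\mu}_{t,\beta}^{B,\nu},\mu_0)^2\ge \hat{I}-ct^{-1/4}$, so
\begin{equation*}
\liminf_{t\to\infty}\{t^2\W_2(\mu_t^{B,\nu},\mu_0)^2\}\ge \ff{1}{1+\dd}\,\liminf_{t\to\infty}\hat{I}.
\end{equation*}

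The last step is to identify $\liminf_{t\to\infty}\hat{I}$. Since for each fixed $m\in\mathbb{N}$ the factor $\e^{-2(\lambda_m-\lambda_0)t^{-\beta}}$ increases to $1$ as $t\to\infty$, and the series obtained by dropping this factor is already known to be finite under the standing hypothesis $h\phi_0^{-1}\in L^p(\mu_0)$ with $p>p_0$ by Theorem \ref{T1.1} (note $p_0\ge \frac{2(d+2)}{d+4+4\alpha}$), the monotone/dominated convergence theorem yields
\begin{equation*}
\lim_{t\to\infty}\hat{I}=\ff{1}{\{\mu(\phi_0)\nu(\phi_0)\}^2}\sum_{m=1}^\infty\ff{[\mu(\phi_0)\nu(\phi_m)+\nu(\phi_0)\mu(\phi_m)]^2}{(\ll_m-\ll_0)[B(\ll_m)-B(\ll_0)]^2}.
\end{equation*}
Letting $\delta\to 0^+$ then delivers the claimed inequality.

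I do not expect any serious obstacle here, since the heavy machinery (Lemma \ref{L4.2} for the leading term, and Lemmas \ref{Lemma4.4}, \ref{Le4.6} for the error terms) has already been assembled. The only mild care needed is to verify that the parameter $\beta$ can be chosen simultaneously to meet all the constraints ($\beta\le \frac{1}{4(5d+2)}$, $\beta<\frac{2}{2d-2\alpha+1}$, and $\beta>0$ so that the error term decays to $0$), which is automatic, and that the passage $\liminf\hat{I}\ge$ the desired sum is justified by Fatou alone even if one does not invoke dominated convergence, so the argument is robust regardless of finer integrability.
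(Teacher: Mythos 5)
Your proposal is correct and follows essentially the same route as the paper: the reverse triangle inequality reduces the problem to the lower bound of Lemma \ref{L4.2} for $\tilde{\mu}_{t,\beta}^{B,\nu}$, with the two error terms killed by Lemmas \ref{Lemma4.4} and \ref{Le4.6}, and the limit of $\hat I$ identified by monotone convergence (the paper subtracts the unsquared distances directly rather than using your $\delta$-weighted squared version, but this is an immaterial algebraic variant).
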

\begin{proof}
By the triangular inequality of $\W_2$, we have for any $\beta\in(0,\ff 2{2d-2\aa+1})$,
there exists a constant $t_0>0$ such that  $\tilde{\mu}_{t,\beta}^{B,\nu}\in\scr{P}_0$ for every $t\geq t_0$ and
\begin{equation}\label{2P4.4-}
\W_2(\mu_t^{B,\nu},\mu_0)\ge \W_2(\tilde{\mu}_{t,\beta}^{B,\nu},\mu_0)-
\W_2(\tilde{\mu}_{t,\beta}^{B,\nu},\mu_{t,\beta}^{B,\nu})-\W_2(\mu_{t,\beta}^{B,\nu},\mu_t^{B,\nu}),\quad t\geq t_0.
\end{equation}
Combining Lemmas \ref{Lemma4.4}, \ref{L4.2} and \ref{Le4.6}, for any  $\beta\in(0,\ff 1{4(5d+2)}]$ and any $p\in(p_0,\infty]$, we can find some constants $c,t_0>0$ such that, for every $\nu=h\mu\in\scr{P}_0$ with $h\phi_0^{-1}\in L^p(\mu_0)$  and every $t\ge t_0$,
\begin{align*}
&t\W_2(\tilde{\mu}_{t,\beta}^{B,\nu},\mu_{t,\beta}^{B,\nu})\le ct\e^{-[B(\ll_1)-B(\ll_0)]t/2} \|h\phi_0^{-1}\|_{L^p(\mu_0)}^{\ff 1 2},\\
&t\W_2(\tilde{\mu}_{t,\beta}^{B,\nu},\mu_0)\ge\big( [{\hat{I}}-ct^{-\ff 1 4}]^+\big)^{\ff 1 2},\\
&t\W_2(\mu_{t,\beta}^{B,\nu},\mu_t^{B,\nu})\le c  \|h\phi_0^{-1}\|_{L^p(\mu_0)} t^{-\frac{\beta}{2}}.
\end{align*}
Substituting these estimates into \eqref{2P4.4-}, we immediately obtain that
\begin{equation*}\begin{split}
t\W_2(\mu_t^{B,\nu},\mu_0)&\ge \big([{\hat{I}}-ct^{-\ff 1 4}]^+\big)^{\ff 1 2}-ct\e^{-[B(\ll_1)-B(\ll_0)]t/2}
\|h\phi_0^{-1}\|_{L^p(\mu_0)}^{\ff 1 2} \\
&\quad-c \|h\phi_0^{-1}\|_{L^p(\mu_0)}  t^{-\frac{\beta}{2}},\quad t\ge t_0.
\end{split}\end{equation*}
Since $\|h\phi_0^{-1}\|_{L^p(\mu_0)}<\infty,$ by letting $t\to\infty$, we prove the desired result.
\end{proof}

\subsection*{Acknowledgment}
The work is supported by the National Natural Science Foundation of China (No. 11831014).  The authors would like to thank Prof. Feng-Yu Wang for helpful comments.

\section*{Appendix}
Let  $B\in\mathbf{B}^\alpha$ for some $\alpha\in(0,1]$. Recall that $\mu_0=\phi_0^2\mu$, and $\mu_0$ is called a quasi-ergodic distribution of the $B$-subordinated Dirichlet diffusion process $(X_t^B)_{t\geq0}$ if
for every $\nu\in\scr{P}$ supported on $\mathring{M}$ and every Borel set $E\subset M$,
$$\lim_{t\rightarrow\infty}\E^\nu\Big[\frac{1}{t}\int_0^t 1_E(X_s^B)\,\d s\Big|\sigma_\tau^B>t\Big]=\mu_0(E).$$
The following result  implies that $\mu_0$ is the unique quasi-ergodic distribution of $(X_t^B)_{t\geq0}$.
\renewcommand{\theprp}{\Alph{prp}}
\begin{prp}\label{QED}
Let $\alpha\in(0,1]$ and $B\in\mathbf{B}^\alpha$. Then, for every $\nu\in\scr{P}$ supported on $\mathring{M}$,
\begin{equation*}\label{QED-1}
\lim_{t\rightarrow\infty}\|\mu_t^{B,\nu}-\mu_0\|_{\textup{var}}=0,
\end{equation*}
and
\begin{equation*}\label{QED-2}
\lim_{t\rightarrow\infty}\E^\nu\Big[\frac{1}{t}\int_0^t f(X_s^B)\,\d s\Big|\sigma_\tau^B>t\Big]=\int_M f\,\d\mu_0,\quad  f\in\B_b(M).
\end{equation*}
\end{prp}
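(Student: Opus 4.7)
The strategy is to apply the spectral decomposition from Lemma \ref{density-h} and bound each component of $\rho_t^{B,\nu}$ in $L^1(\mu_0)$. Observe that $\|\mu_t^{B,\nu}-\mu_0\|_{\textup{var}} = \mu_0(|\rho_t^{B,\nu}|)$, so once this quantity tends to zero, the second assertion follows immediately from $|\mu_t^{B,\nu}(f)-\mu_0(f)|\leq\|f\|_\infty\|\mu_t^{B,\nu}-\mu_0\|_{\textup{var}}$ for every $f\in\B_b(M)$.

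First I would reduce to the case $\nu=h\mu$ with $\|h\phi_0^{-1}\|_\infty<\infty$. For arbitrary $\nu\in\scr{P}$ supported on $\mathring{M}$ (in particular $\nu\in\scr{P}_0$) and a fixed $\varepsilon>0$, the Markov-property identity \eqref{6P3.4} yields $\hat{\mu}_{t,\varepsilon}^{B,\nu} = \mu_{t-\varepsilon}^{B,\nu_\varepsilon}$, where the new initial measure $\nu_\varepsilon=h_\varepsilon\mu$ satisfies $\|h_\varepsilon\phi_0^{-1}\|_\infty\leq c\varepsilon^{-(d+2)/(2\alpha)}$ by \eqref{7IP3.4}. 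Combined with the elementary bound $\|\hat{\mu}_{t,\varepsilon}^{B,\nu}-\mu_t^{B,\nu}\|_{\textup{var}}\leq 2\varepsilon/t\to 0$ as $t\to\infty$, this reduces the problem to the bounded-density case.

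For such a nice $\nu$, I would bound the three contributions in $\rho_t^{B,\nu}=\tilde{\rho}_t^{B,\nu}-A_t+\frac{1}{t\E^\nu[1_{\{t<\sigma_\tau^B\}}]}\int_0^t\xi_s\,\d s$ separately. The $A_t$ contribution satisfies $\mu_0(|A_t|)\leq c\,\e^{-[B(\lambda_1)-B(\lambda_0)]t}$ by the same computation carried out at the end of the proof of Lemma \ref{L3.1}, while the $\xi_s$ integral is exponentially small by Lemma \ref{L3.1} with $p=\infty$. For the dominant term $\tilde{\rho}_t^{B,\nu}$, the orthonormality of $\{\phi_m\phi_0^{-1}\}_{m\in\mathbb{N}_0}$ in $L^2(\mu_0)$ yields
$$\|\tilde{\rho}_t^{B,\nu}\|_{L^2(\mu_0)}^2 = \frac{\e^{-2B(\lambda_0)t}}{(t\E^\nu[1_{\{t<\sigma_\tau^B\}}])^2}\sum_{m=1}^\infty\frac{[\mu(\phi_0)\nu(\phi_m)+\nu(\phi_0)\mu(\phi_m)]^2}{[B(\lambda_m)-B(\lambda_0)]^2},$$
and this series is finite thanks to the Parseval identities $\sum_{m\geq 1}\nu(\phi_m)^2\leq\|h\phi_0^{-1}\|_{L^2(\mu_0)}^2\leq\|h\phi_0^{-1}\|_\infty^2$ and $\sum_{m\geq 1}\mu(\phi_m)^2\leq\|\phi_0^{-1}\|_{L^2(\mu_0)}^2=1$ together with $B(\lambda_m)-B(\lambda_0)\geq B(\lambda_1)-B(\lambda_0)>0$; applying \eqref{equ-B1} to the denominator then gives $\|\tilde{\rho}_t^{B,\nu}\|_{L^2(\mu_0)}=O(1/t)$, hence $\mu_0(|\tilde{\rho}_t^{B,\nu}|)\to 0$ by Cauchy--Schwarz.

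Collecting these estimates completes the proof in the bounded-density case, and the reduction step handles the general case. The main obstacle is that for an arbitrary $\nu$ supported on $\mathring{M}$ the coefficients $\nu(\phi_m)$ need not be square-summable, so the above $L^2$ estimate for $\tilde{\rho}_t^{B,\nu}$ would fail directly; the Markov smoothing via \eqref{6P3.4}--\eqref{7IP3.4} is precisely the device that overcomes this difficulty.
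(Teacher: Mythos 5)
Your proof is correct, but it is a genuinely different argument from the one in the paper. The paper's proof is a two-line verification: it invokes the abstract quasi-ergodicity theorem for ultracontractive Markov processes from \cite[Theorem 2.1]{CJ2014} and only checks its two hypotheses, namely that $\int_M p_t^{D,B}(x,x)\,\mu(\d x)<\infty$ (via the spectral sum and \eqref{EIG}, \eqref{B-lb}) and that $\|P_t^{D,B}\|_{L^2(\mu)\to L^\infty(\mu)}\le c(1+t^{-d/(4\alpha)})$ (via \eqref{SDSG0}, \eqref{DPQ} and the moment bound on $S_t^B$). You instead give a self-contained spectral proof: the decomposition $\rho_t^{B,\nu}=\tilde\rho_t^{B,\nu}-A_t+\frac{1}{t\E^\nu[1_{\{t<\sigma_\tau^B\}}]}\int_0^t\xi_s\,\d s$ from Lemma \ref{density-h}, the $L^1(\mu_0)$ bounds of Lemma \ref{L3.1} with $p=\infty$ for the error terms, Bessel's inequality plus \eqref{equ-B1} for the dominant term, and the Markov-smoothing reduction \eqref{6P3.4}--\eqref{7IP3.4} to pass from bounded densities to arbitrary $\nu$ supported on $\mathring M$. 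All the individual steps check out (in particular $\sum_m\nu(\phi_m)^2\le\|h\phi_0^{-1}\|_{L^2(\mu_0)}^2$ is the right Parseval bound, and the reduction only needs one fixed $\varepsilon\in(0,1)$ since $2\varepsilon/t\to0$ anyway). What each route buys: the paper's citation is shorter and identifies the result as an instance of a general quasi-ergodicity phenomenon, at the cost of an external reference; your argument avoids the citation entirely, reuses only machinery already proved in Sections 3--4 (which is logically safe, as nothing there depends on the Appendix), and in fact yields the quantitative rate $\|\mu_t^{B,\nu}-\mu_0\|_{\mathrm{var}}=O(t^{-1})$ for bounded-density initial data, which is strictly more than the qualitative limit asserted in the proposition.
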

\begin{proof} The proof is a direct application of \cite[Theorem 2.1]{CJ2014} (see also \cite{ZLS}). So we only need to check the assumptions $({\rm A}_1)$ and $({\rm A}_2)$ on \cite[page 185]{CJ2014}. $({\rm A}_1)$ is clearly satisfied. By \eqref{SDHK}, Fubini's theorem, $\mu(\phi_m^2)=1$ for each $m\in\mathbb{N}_0$, \eqref{EIG} and \eqref{B-lb},
\begin{equation*}\begin{split}
&\int_M p^{D,B}_t(x,x)\,\mu(\d x)=\int_M \sum_{m=0}^\infty\e^{-B(\lambda_m)t}\phi_m^2(x)\,\mu(\d x)=\sum_{m=0}^\infty\e^{-B(\lambda_m)t}\\
&=\e^{-B(\lambda_0)t}\Big(1+\sum_{m=1}^\infty\e^{-[B(\lambda_m)-B(\lambda_0)]t}\Big)\leq c_1 \Big(1+\sum_{m=1}^\infty\e^{-c_2tm^{2\alpha/d}}\Big)<\infty,\quad t>0,
\end{split}\end{equation*}
for some constants $c_1,c_2>0$. By \eqref{SDSG0}, \eqref{DPQ}, a similar argument as in \eqref{7P3.4-}, there exist constants $c_3,c_4>0$ such that
\begin{equation*}\begin{split}
&\|P_t^{D,B}f\|_{L^\infty(\mu)}=\Big\| \int_0^\infty P_u^Df\,\P(S_t^B\in\d u)\Big\|_{L^\infty(\mu)}\\
&\leq\int_0^\infty \|P_u^Df\|_{L^\infty(\mu)}\,\P(S_t^B\in\d u)\\
& \le c_3 \|f\|_{L^2(\mu)}\E\left[\e^{-\ll_0 S_t^B}(1\wedge S_t^B)^{-\ff d 4}\right]\\
&  \le c_3\|f\|_{L^2(\mu)}\E\left[(1\wedge S_t^B)^{-\ff d 4}\right]\\
& \le c_4\|f\|_{L^2(\mu)}(1+t^{-\ff d{4\aa}}),\quad t>0,\,f\in L^2(\mu),
\end{split}\end{equation*}
i.e., $\|P_t^{D,B}\|_{L^2(\mu)\rightarrow L^\infty(\mu)}\leq c_4(1+t^{-\ff d{4\aa}})$, $t>0$. Thus, $({\rm A}_2)$ is satisfied.
\end{proof}

See e.g. \cite{O2020,HYZ} for recent studies on the quasi-ergodic distribution, and refer to \cite{CMS} for a detailed study on the closely related notions such as the quasi-stationary distribution and the Yaglom limit.


\begin{thebibliography}{999}
\bibitem{AKT1984}
M. Ajtai, J. Koml\'{o}s, G. Tusn\'{a}dy,  \emph{On optimal matchings}, Combinatorica 4(1984), 259--264.

\bibitem{AG2019}
L. Ambrosio, F. Glaudo,\emph{ Finer estimates on the $2$-dimensional matching problem}, J. \'{E}c. polytech. Math. 6(2019), 737--765.

\bibitem{eAST}
L. Ambrosio, F. Stra, D. Trevisan, \emph{A PDE approach to a 2-dimensional matching problem}, Probab. Theory Related Fields 173(2019), 433--477.

\bibitem{Bertoin97}
J. Bertoin, \emph{Subordinators: examples and applications}, in: Lectures on Probability and Statistics, Ecole d'Et\'{e} de Probabilit\'{e}s de Saint-Flour XXVII-1997, Lect. Notes Math. 1717(1999), 1--91.

\bibitem{BL2019}
S. Bobkov, M. Ledoux, \emph{One-dimensional empirical measures, order statistics, and Kantorovich transport distances}, Mem. Amer. Math. Soc. 261(2019), no. 1259.

\bibitem{BLG2014}
E. Boissard, T. Le Gouic, \emph{On the mean speed of convergence of empirical and occupation measures in Wasserstein distance}, Ann. Inst. Henri Poincar\'{e} Probab. Stat. 50(2014), 539--563.

\bibitem{Chavel}
I. Chavel,  \emph{Eigenvalues in Riemannian Geometry}, Academic Press, 1984.

\bibitem{CJ2014}
J. Chen, S. Jian, \emph{A remark on quasi-ergodicity of ultracontractive Markov processes}, Statist. Probab. Lett. 87(2014), 184--190.

\bibitem{ChenMF2004}
M.-F. Chen, \emph{From Markov Chains to Non-Equilibrium Particle Systems}, second edition, World Scientific Publishing Co. Pte. Ltd., Singapore, 2004.

\bibitem{CMS} P. Collet, S. Martinez, J. San Martin, \emph{Quasi-Stationary Distributions}, Springer, 2013.

\bibitem{Davies89}
E.B. Davies, \emph{Heat kernels and spectral theory}, Cambridge Tracts in Mathematics, vol. 92, Cambridge University Press, Cambridge, 1989.

\bibitem{DB1991}
E.B. Davies, B. Simon, \emph{Ultracontractivity and heat kernels for Schr\"{o}dinger operators and Dirichlet Laplacians}, J. Funct. Anal. 59(1984),  335--395.

\bibitem{DSS2013}
S. Dereich, M. Scheutzow, R. Schottstedt, \emph{Constructive quantization: Approximation by empirical measures}, Ann. Inst. Henri Poincar\'{e} Probab. Stat. 49(2013), 1183--1203.

\bibitem{FG2015}
N. Fournier, A. Guillin, \emph{On the rate of convergence in Wasserstein distance of the empirical measure}, Probab. Theory Related Fields 162(2015), 707--738.

\bibitem{HYZ}
G. He, G. Yang, Y. Zhu, \emph{Some conditional limiting theorems for symmetric Markov processes with tightness property}, Electron. Commun. Probab. 24(2019), 1--11.

\bibitem {Led2017}
M. Ledoux, \emph{On optimal matching of Gaussian samples},  Zap. Nauchn. Sem. S.-Peterburg. Otdel. Mat. Inst. Steklov. (POMI) 457, Veroyatnost' i Statistika. 25(2017),  226--264.

 \bibitem{LiWu}
 H. Li, B. Wu, \emph{Wasserstein Convergence Rates for Empirical Measures of Subordinated Processes on Noncompact Manifolds}, Preprint (2022), arXiv:2201.12797.

\bibitem{O2020}
W. O\c{c}afrain,  \emph{$Q$-Processes and asymptotic properties of Markov processes conditioned not to hit moving boundaries}, Stochastic Process. Appl. 130(2020), 3445--3476.

\bibitem{Ouhabaz05}
E.M. Ouhabaz, \emph{Analysis of Heat Equations on Domains}, London Mathematical Society, 2005.

\bibitem{SSV2012}
R.L. Schilling, R. Song, Z. Vondra\u{c}ek, \emph{Bernstein functions: theory and applications}, second edition, De Gruyter Studies in Mathematics, vol. 37, Walter de Gruyter \& Co., Berlin, 2012.

\bibitem{SV2003}
R. Song, Z. Vondra\u{c}ek, \emph{Potential theory of subordinate killed Brownian motion in a domain}, Probab. Theory Related Fields 125(2003), 578--592.

\bibitem{SV2008}
R. Song, Z. Vondra\u{c}ek, \emph{On the relationship between subordinate killed and killed subordinate processes}, Electron. Commun. Probab. 13(2008), 325--336.

\bibitem{Tal2014}
M. Talagrand, \emph{Upper and lower bounds of stochastic processes}, Modern Surveys in Math., vol. 60, Springer-Verlag, Berlin, 2014.

\bibitem{Vill2003}
C. Villani,  \emph{Topics in Optimal Transportation}, Graduate Studies in Mathematics, vol. 58, American Mathematical Society, Providence, 2003.

\bibitem{Wang2014}
F.-Y. Wang, \emph{Analysis for Diffusion Processes on Riemannian Manifolds}, World Scientific, Singapore, 2014.

\bibitem{eW1}
F.-Y. Wang,  \emph{Precise limit in Wasserstein distance for conditional empirical measures of Dirichlet diffusion processes}, J. Funct. Anal. 280(2021), paper 108998.

\bibitem{eW2}
F.-Y. Wang, \emph{Convergence in Wasserstein Distance for empirical measures of Dirichlet diffusion processes on manifolds}, Preprint (2020), arXiv:2005.09290. To appear in J. Eur. Math. Soc..

\bibitem{eW3}
F.-Y. Wang, \emph{Wasserstein convergence rate for empirical measures on noncompact manifolds}, Stochastic Process. Appl. 144(2022), 271--287.

\bibitem{eW4}
F.-Y. Wang, \emph{Convergence in Wasserstein distance for empirical measures of semilinear SPDEs}, Preprint (2021), arXiv:2102.00361. To appear in Ann. Appl. Probab..

\bibitem{WangWu}
F.-Y. Wang, B. Wu, \emph{Wasserstein Convergence for Empirical Measures of Subordinated Diffusions on Riemannian Manifolds}, Preprint (2021), arXiv:2107.11568.  To appear in Potential Analysis, doi:10.1007/s11118-022-09989-6.

\bibitem{eWZ}
F.-Y. Wang, J.-X. Zhu, \emph{Limit theorems in Wasserstein distance for empirical measures of diffusion processes on Riemannian manifolds}, Preprint (2019), arXiv:1906.03422.

\bibitem{WB2019}
J. Weed, F. Bach, \emph{Sharp asymptotic and finite-sample rates of convergence of empirical measures in Wasserstein distance},
Bernoulli 25(2019), 2620--2648.

\bibitem{ZLS}
J. Zhang, S. Li, R. Song, \emph{Quasi-stationarity and quasi-ergodicity of general Markov processes}, Sci. China Math. 57(2014), 2013--2024.
 \end{thebibliography}
\end{document}